\newtheorem{thm}{Theorem}[section]
\newtheorem{lem}[thm]{Lemma}
\newtheorem{prop}[thm]{Proposition}
\newtheoremstyle{mydef}
{\topsep}{\topsep}%
{}{}%
{\bfseries}{}
{\newline}
{%
  \thmname{#1}~\thmnumber{#2}\thmnote{\ -\ #3}.\\*[-1.5ex]%
}%
\theoremstyle{mydef}
\newtheorem{definition}[thm]{Definition}
\theoremstyle{definition}
\newtheorem{exmp}[thm]{Example}
\title{Twisted endoscopy from a sheaf-theoretic perspective}
\author{Aaron Christie and Paul Mezo}
\date{}
\begin{document}

\maketitle

\begin{abstract}
The standard theory of endoscopy for real groups has two parallel
formulations.  The original formulation of Langlands and Shelstad relies on
methods in harmonic analysis. The subsequent formulation of Adams,
Barbasch and Vogan relies on sheaf-theoretic methods.  The original
formulation was extended by Kottwitz and Shelstad to twisted
endoscopy. We extend the sheaf-theoretic formulation to the context of
twisted endoscopy and provide applications for computing Arthur packets.
\end{abstract}
\section{Introduction}

Let $G$ be a connected reductive algebraic group defined over
a local field $F$ and let $G(F)$ be its group of $F$-points.  One may
view the theory of endoscopy as an 
endeavour to make precise connections between the representations of
$G(F)$ and the representations of $H(F)$, where $H$ is a reductive
group which is, in some sense,  inside $G$.  The theory is
largely conjectural over nonarchimedean fields, but over archimedean fields
a great deal has been worked out.

The groundbreaking results in the archimedean case are due to
Langlands and Shelstad  (\cite{langclass}, 
\cite{shelstad}). Their results were achieved largely by means of
harmonic analysis---the precise connections between the
representations of $G(F)$ and $H(F)$ that they proved are expressed
through identities between distribution characters. An alternative
perspective on endoscopy that does not derive its identities from
harmonic analysis was developed by Adams, Barbasch and Vogan,
culminating in the book  \cite{abv}. There, endoscopic identities are
obtained using a duality between representations and constructible (or
perverse) sheaves on varieties attached to $G$ and $H$. 

A natural question when presented with these two theories is to what
extent the identities of one theory can be obtained from the
identities of the other. Further, one could ask this question in the
setting of \emph{twisted} rather than standard endoscopy. That is
exactly the line of inquiry we pursue here. Before describing the
details of paper, let us give a brief tour of the ideas mentioned
above.

We take for granted that the reader is somewhat familiar with the original
perspective on endoscopy.  For simplicity, we assume here that $G$ is
adjoint and quasisplit over $\mathbb{R}$.  An \emph{L-parameter} is an
L-homomorphism
\begin{equation}
  \label{lhom}
  \phi_{G}: W_{\mathbb{R}} \rightarrow {^L}G
\end{equation}
of the real Weil group into the Langlands dual group.  The local
Langlands Correspondence pairs $\phi_{G}$ with an \emph{L-packet}
$\Pi_{\phi_{G}}$, a finite set of (infinitesimal equivalence classes
of) irreducible admissible representations of $G(\mathbb{R})$.  If
$\phi_{G}$ is an L-parameter corresponding to tempered
representations, then the correspondence may be refined.  In this case
Shelstad has shown that there is an injective map
\begin{equation}
  \label{injchar}
  \pi \in \Pi_{\phi_{G}} \mapsto \tau_{\phi_{G}}(\pi)
  \end{equation}
to  the irreducible characters of the component group of
${^\vee}G_{\phi_{G}}$, the centralizer in the dual  group ${^\vee}G$ of the 
image of $\phi_{G}$ (Corollary 11.1 \cite{she3}). 

A standard \emph{endoscopic group} $H$ may be obtained by choosing a semisimple
element $s \in {^\vee}G_{\phi_{G}}$  and setting the dual group
${^\vee}H$ to equal the identity component of the fixed-point subgroup
${^\vee}G^{\mathrm{Int}(s)}$ of the inner automorphism
$\mathrm{Int}(s)$.  By definition, $H$ is a quasisplit reductive
group. 
Avoiding some technicalities, we assume  the
inclusions $\phi_{G}(W_{\mathbb{R}}) \subset 
{^L}H \subset {^L}G$.  We may then define the L-parameter $\phi_{H}:
W_{\mathbb{R}}   \rightarrow {^L}H$ by setting its values equal to
those of $\phi_{G}$.  In the tempered case, Shelstad's endoscopic
identities have the form
\begin{equation}
  \label{shelend}
\mathrm{Lift} \left( \sum_{\pi_{H} \in \Pi_{\phi_{H}}}
\Theta_{\pi_{H}}\right) = \sum_{\pi \in \Pi_{\phi_{G}}}
\tau_{\phi_{G}}(\pi)(\dot{s})\  \Theta_{\pi}
\end{equation}
(Corollary 11.7 \cite{she3}).  On the right, $\Theta_{\pi}$ is the
distribution character of $\pi \in \Pi_{\phi_{G}}$, $\tau_{\phi_{G}}(\pi)$ is
the character of (\ref{injchar}), and $\dot{s}$ is the image of $s$ in
the component group.  On the left, $\mathrm{Lift}$ is defined through a
map of orbital integrals (Theorem 6.2 \cite{she3}).  It is significant
that the distribution being lifted on the left is \emph{stable} (3
\cite{arthur89}).

There are two different ways in which one would like to extend
(\ref{shelend}).  First, one would like to include nontempered
representations  while maintaining stability.  To this end, Arthur
has extended the notion of L-parameters
to so-called \emph{A-parameters},  and conjectured an extension of
(\ref{shelend}) for \emph{A-packets} (4 \cite{arthur89}).

The other desired extension of (\ref{shelend}) is to twisted endoscopy.  The
basic idea of twisted endoscopy is to allow endoscopic groups to be
defined through outer automorphisms.  Specifically, one would like to
define $H$ by allowing ${^\vee}H$ to equal the identity component of
the fixed-point subgroup  ${^\vee}G^{{^\vee}\vartheta}$ of an outer automorphism
${^\vee}\vartheta$ of ${^\vee}G$.  The foundations for twisted
endoscopy were laid by Kottwitz and Shelstad in \cite{ks}, and a
partial extension of (\ref{shelend}) has been proved for real groups
(\cite{sheltwist}, \cite{mezotwist}). 

Recently, Arthur knit together these two extensions of (\ref{shelend})
in a global context and consequently proved the nontempered version of
(\ref{shelend}) for orthogonal and symplectic groups (Theorem 2.2.1
\cite{arthurbook}). His A-packets are defined using twisted endoscopy
for $G = \mathrm{GL}_{N}$.

We now juxtapose the theory of Adams, Barbasch and Vogan with this
terse reminder of endoscopy.  The underlying novelty in their
theory is the introduction of a topological space $X({^\vee}G^{\Gamma})$ which
reparametrizes the set of L-homomorphisms (\ref{lhom}).  This space
is equipped with a ${^\vee}G$-action and it is the ${^\vee}G$-orbits
which are in bijection with the equivalence classes of
L-homomorphisms.  A remarkable property of $X({^\vee}G^{\Gamma})$ is
that the closure relations of its ${^\vee}G$-orbits imply 
relationships between the representations of different L-packets
(Proposition 1.11 \cite{abv}).

The characters $\tau_{\phi_{G}}(\pi)$ of (\ref{injchar}) are subsumed
so that the local Langlands Correspondence becomes a \emph{bijection}
between these characters and (infinitesimal equivalence classes of) irreducible
representations of an extended L-packet (Theorem 1.18 \cite{abv}).
The L-packets are extended in that they include the usual L-packets of all
inner forms of $G(\mathbb{R})$ (actually, all  \emph{strong real
  forms} as in
Definition 1.13 \cite{abv}).  Putting these details aside,  the
extended Langlands Correspondence of \cite{abv} allows one to match an
irreducible representation $\pi$ of $G(\mathbb{R})$ with a
${^\vee}G$-orbit $S_{\phi_{G}} \subset X({^\vee}G^{\Gamma})$ and a character
$\tau_{\phi_{G}}(\pi)$.  From the pair
$(S_{\phi_{G}},\tau_{\phi_{G}}(\pi))$ one may in turn define a local
system on $S_{\phi_{G}}$ and a constructible sheaf on
$X({^\vee}G^{\Gamma})$.  Thus, we have a geometric  interpretation of
representations.  By fine-tuning this geometric interpretation to the
implications of the closure relations among the ${^\vee}G$-orbits, a duality is
defined between representations and sheaves (Theorem 1.24 \cite{abv}).  

All of this may be carried out for an endoscopic group $H$, and so we
may reconsider the tempered endoscopic identity (\ref{shelend}) from
this new viewpoint.  The analogue 
of (\ref{shelend}) is Proposition 26.7 \cite{abv}.   In this
proposition the right-hand
side differs  by taking the sum over an \emph{extended} L-packet.
Similarly, the sum on the left-hand side of (\ref{shelend}) is taken
over an extended L-packet.  However, the lift of the left-hand side is
not defined through a map of orbital integrals.  Instead, it is
defined through duality together with the restriction functor on constructible
sheaves.

A nontempered analogue of (\ref{shelend}) is also proven in Theorem
26.24 \cite{abv}.  The theory surrounding
this nontempered analogue, that of \emph{microlocal geometry}, is
quite sophisticated.  We will limit ourselves to saying only that the
sums in the nontempered version of (\ref{shelend}) are taken over
\emph{micro-packets} rather than extended L-packets.
The A-packets of \cite{abv} are defined as a special class of these
micro-packets.

One thing that is not considered in \cite{abv}, however, is twisted
endoscopy. Our goal is to describe how an outer automorphism may be
introduced into the theory of \cite{abv} in a manner that reflects the
theory of twisted endoscopy introduced by Kottwitz and Shelstad. We
continue by outlining the contents of the paper.

In Section \ref{recall} we present the basic objects of \cite{abv} and
the extended local Langlands Correspondence.  We assume throughout that the
reader has some familiarity with  \cite{abv}.

In Section \ref{auts} we introduce the automorphisms of the basic
objects in \cite{abv}  and provide characterizations of automorphisms
of the  extended groups in terms of invariants. We define a
compatibility condition between an automorphism of an extended group
and an automorphism of its dual. We describe the induced action of the
automorphisms on representations and on $X({^\vee} G^\Gamma )$,
objects appearing in the extended local Langlands Correspondence. 

In Section 4, having obtained actions of automorphisms on objects on
both sides of the extended Local Langlands Correspondence in the
previous section, we formulate and prove a precise equivariance
statement for the correspondence with respect to these actions
(Theorem \ref{equivthm}). 

In the first three subsections of Section 5 we prepare for the
proof of the twisted endoscopic identity.  The definitions of
twisted endoscopic data given in Sections 5.1 and 5.2 are amalgamations
of those given in \cite{ks} and \cite{abv}. Standard endoscopic
lifting, which provides the model for twisted endoscopic lifting, is
reviewed in Section 5.3. As already mentioned, it depends upon a
duality between representations and sheaves, one form of which pairs
irreducible representations with perverse sheaves. Standard endoscopic
lifting is obtained by combining this pairing with the restriction
functor on perverse sheaves.

In Section 5.4 we move on to twisted endoscopic lifting. We define
\emph{twisted} representations and \emph{twisted} perverse sheaves,
and give conditions for a natural pairing between them. This pairing
is then combined with the restriction functor, just as in the standard
case. After this, the only remaining complication is defining the
correct objects to which one applies the lift. A geometric
interpretation of these microlocal objects is given in (25.1)(j)
\cite{abv}. We reformulate them as virtual twisted representations in
(\ref{etadef2}). These technical difficulties aside, the essential endoscopic
lifting identity in its twisted form already appears in \cite{abv} as
Theorem 25.8. One might say without much exaggeration that twisted
endoscopic lifting is proven in that theorem. 

We finish in Section 6 by specializing to the context of twisted
endoscopy for general linear groups as in \cite{arthurbook}. An example for
$\mathrm{GL}_2$ is provided followed by a discussion of the
computational difficulties that arise is higher rank. 

Ultimately, one would like to compare the twisted endoscopic identity
(\ref{endolift2})  of
the final section  with Arthur's twisted endoscopic
identity  (Theorem 2.2.1 
\cite{arthurbook}).  In doing so, one should be able to compare the 
A-packets of Arthur and the A-packets of \cite{abv}  (\emph{cf.} 8
\cite{artreal}).   Recently,
Arancibia, Moeglin and 
Renard have shown that  Arthur's A-packets are identical in some key cases
to those of Adams-Johnson and Barbasch-Vogan (\cite{amr}, \cite{mr}).
Undoubtedly, this indicates that Arthur's A-packets are identical to
those of \cite{abv}.

In closing, we hope that our excursion into \cite{abv} will be useful
to anyone with an interest in this book.  Although our treatment is
not expository, we have made efforts to reveal the structure of
\cite{abv} without overtaxing the reader.

\section{Recollections from Adams-Barbasch-Vogan}
\label{recall}

We briefly recall those objects of
\cite{abv} to which automorphisms may apply.    Everything
recalled here may be found in the first six chapters of \cite{abv}.  

Let $\Gamma$ be the
Galois group $\mathrm{Gal}(\mathbb{C}/\mathbb{R})$.  Throughout $G$ denotes a
connected complex reductive algebraic group.  A \emph{weak extended
  group containing} $G$  is a real Lie group $G^{\Gamma}$ which is an extension
$$1 \rightarrow G \rightarrow G^{\Gamma} \rightarrow \Gamma
\rightarrow 1$$
subject to the condition that every element of $G^{\Gamma}-G$ acts on
$G$ by conjugation as an antiholomorphic automorphism.  Henceforth we
fix a weak extended group $G^{\Gamma}$.  A \emph{strong
  real form} of $G^{\Gamma}$ is an element $\delta \in G^{\Gamma}-G$
such that $\delta^{2}$ is central and has finite order.  The weak
extended group $G^{\Gamma}$ becomes an
\emph{extended group containing} $G$ if it is endowed with 
the $G$-conjugacy class $\mathcal{W}$ of a triple
$(\delta, N,  \chi)$ in which $\delta$ is a strong real
form, $N$ is a 
maximal unipotent subgroup of $G$ normalized by $\delta$, and $\chi$
is a non-degenerate unitary character on the real points of 
$N$.  This triple is 
called a \emph{Whittaker datum}.

Let us denote the inner automorphism of $\delta \in G^{\Gamma}$ by
$\mathrm{Int}(\delta)$.  Every strong real form $\delta$ of
$G^{\Gamma}$ defines an antiholomorphic involution   $\sigma(\delta) =
\mathrm{Int}(\delta)_{|G}$.  The fixed-point subgroup $G(\mathbb{R},
\delta) = G^{\sigma(\delta)}$ of
$\sigma(\delta)$ is a real form of $G$.  The set of such
real forms constitutes an inner class of 
some quasisplit form of $G$ (Proposition 2.14 \cite{abv}).  Two strong
real forms 
are \emph{equivalent} if they are $G$-conjugate.  Equivalent strong real
forms produce isomorphic real forms.  However, it is sometimes also possible for
inequivalent strong real forms to produce isomorphic real forms.

The (weak) extended group $G^{\Gamma}$ may be characterized in terms of
two invariants (Corollary 2.16 and Proposition 3.6 \cite{abv}).    The
first of the two invariants is an automorphism 
$a$ of the canonical based root datum $\Psi_{0}(G)$, which is induced from
conjugation by a(ny) $\delta \in G^{\Gamma}-G$.  To express the second
invariant, we set $Z(G)$ equal to the centre of $G$ and $\sigma_{Z} =
\mathrm{Int}(\delta)_{|Z(G)}$ for a(ny) $\delta \in G^{\Gamma}-G$.
There exists an element $\delta_{q} \in G^{\Gamma} - G$ such that
$G(\mathbb{R}, \delta_{q})$ is a quasisplit real form and
$\delta_{q}^{2} \in Z(G)^{\sigma_{Z}}$.  The second invariant of
$G^{\Gamma}$ is the coset $\bar{z} \in Z(G)^{\sigma_{Z}}/
(1+\sigma_{Z}) Z(G)$ of $\delta_{q}^{2}$ (which is well-defined).  The
pair of invariants $(a, \bar{z})$ determines the weak extended group
$G^{\Gamma}$ up to isomorphism.   When $G^{\Gamma}$ is endowed with a
Whittaker datum $\mathcal{W} = G \cdot (\delta_{0},N,\chi)$, then $z =
\delta_{0}^{2} \in Z(G)^{\sigma_{Z}}$ is a canonical representative for $\bar{z}$.
In this case $z$ is called the second invariant of the extended group
$(G^{\Gamma}, \mathcal{W})$, and the pair $(a,z)$ determines the
extended group up to isomorphism.

One of our principal objects of interest is a \emph{representation of a
strong real form of} $G^{\Gamma}$.  This is a pair $(\pi,\delta)$ in
which $\delta$ is a strong real form of $G^{\Gamma}$ and $\pi$ is an
admissible representation of $G(\mathbb{R}, \delta)$.  Two such pairs
$(\pi, \delta)$ and $(\pi', \delta')$ are \emph{equivalent} if
$g\delta g^{-1} = \delta'$ for some $g \in G$, and $\pi \circ
\mathrm{Int}(g^{-1})$ is infinitesimally equivalent to $\pi'$.  
Let $\Pi(G/\mathbb{R})$ denote the set of (equivalence classes of)
irreducible representations of strong real forms of $G^{\Gamma}$.  

We now recall the objects which are dual  to $G^{\Gamma}$. Let
${^\vee}G$ be the dual group of $G$.  That is to say ${^\vee}G$ is a
connected complex reductive algebraic group whose 
canonical based root datum is dual to that of $G$
$$\Psi_{0}({^\vee}G) = {^\vee} \Psi_{0}(G).$$   A
\emph{weak E-group for} $G$ is an algebraic group
${^\vee}G^{\Gamma}$ which is an extension
$$1 \rightarrow {^\vee}G \rightarrow {^\vee}G^{\Gamma} \rightarrow \Gamma
\rightarrow 1.$$  
Let ${^\vee}G^{\Gamma}$ be a weak E-group for $G$.  
Conjugation by a(ny) element ${^\vee}\delta \in {^\vee}G^{\Gamma}-
{^\vee}G$ induces an automorphism in 
\begin{equation}
\label{autiso}
\mathrm{Aut}\Psi_{0}({^\vee}G)
\cong \mathrm{Aut}\Psi_{0}(G).
\end{equation}  We call this automorphism the
\emph{first invariant} of ${^\vee}G^{\Gamma}$.  The weak E-group
${^\vee}G^{\Gamma}$ for $G$ becomes a \emph{weak E-group for}
$G^{\Gamma}$ if its first invariant equals the first
invariant $a \in \mathrm{Aut}
\Psi_{0}(G)$ of $G^{\Gamma}$ under (\ref{autiso}).  From now on we
assume that ${^\vee}G^{\Gamma}$ is a weak E-group for $G^{\Gamma}$.
This weak E-group becomes an \emph{E-group} for $G$ if we endow it
with a ${^\vee}G$-conjugacy class $\mathcal{D}$ of elements of finite
order in ${^\vee}G^{\Gamma} - {^\vee}G$ whose inner automorphisms
preserve  splittings of ${^\vee}G$ (Proposition 2.11 \cite{abv}).

There is a classification of (weak) E-groups in terms of first and
second invariants, just as there was for extended groups (Proposition
4.4, Proposition 4.7 and Corollary 4.8 \cite{abv}).  The second
invariant of ${^\vee}G^{\Gamma}$ is an element of
$Z({^\vee}G)^{\theta_{Z}}/(1+\theta_{Z}) Z({^\vee}G)$, where
$\theta_{Z} = \mathrm{Int}({^\vee}\delta)_{|Z({^\vee}G)}$ for any ${^\vee}\delta \in
{^\vee}G^{\Gamma} - {^\vee}G$.  The second invariant of an E-group
$({^\vee}G^{\Gamma}, \mathcal{D})$ is the canonical representative of
the previous second invariant
given by the square of any element in $\mathcal{D}$ (Definition 4.6
\cite{abv}). 

The dual objects to representations of strong real forms are
\emph{complete geometric parameters}.  To describe these
parameters we must first introduce a complex variety $X({^\vee}G^{\Gamma})$,
which reparametrizes the set of L-parameters pioneered by Langlands.  
Towards this end, let ${^\vee}\mathfrak{g}$ be the Lie algebra of
${^\vee}G$.  Let 
 $\lambda \in {^\vee}\mathfrak{g}$ be a semisimple element and set
$\mathcal{O} = \mathrm{Ad}({^\vee}G) \cdot \lambda \subset
{^\vee}\mathfrak{g}$.  The subalgebra $\mathfrak{n}(\lambda) \subset
{^\vee}\mathfrak{g}$ consists of the positive integral eigenspaces
of $\mathrm{ad}(\lambda)$.  Let $N(\lambda) =
\exp(\mathfrak{n}(\lambda))$, $L(\lambda)$ be the centralizer in
${^\vee}G$ of $\lambda$, and $P(\lambda) = L(\lambda)N(\lambda)$.  The
\emph{canonical flat through} $\lambda$ is the set
$\mathcal{F}(\lambda) =
\mathrm{Ad}(P(\lambda))\cdot \lambda$.  The set
$\mathcal{F}(\mathcal{O})$ of canonical flats  which are conjugate to
$\mathcal{F}(\lambda)$ encodes information about L-parameters of
representations with infinitesimal character $\lambda$.  This
corresponds to the restriction of these L-parameters to
$\mathbb{C}^{\times}$ in the Weil group $W_{\mathbb{R}} =
\mathbb{C}^{\times} \coprod j\mathbb{C}^{\times}$ (Proposition 5.6
\cite{abv}). The set which encodes the values of these L-parameters at
$j$ is 
$$\mathcal{I}(\mathcal{O}) = \{y \in {^\vee}G^{\Gamma} -
     {^\vee}G : y^{2} \in  \exp(2 \pi i \mathcal{O}) \}.$$
The set of \emph{geometric parameters} (for $\mathcal{O}$) is
$$X(\mathcal{O}, {^\vee}G^{\Gamma})=  \{ (y, \mathcal{F}(\lambda')):
\lambda' \in \mathcal{O},\ y \in \ {^\vee}G^{\Gamma} -
     {^\vee}G,  \ y^{2} = \exp(2
\pi i \lambda')\}.$$
This set is a fibre product of $\mathcal{F}(\mathcal{O})$ and
$\mathcal{I}(\mathcal{O})$ which carries a natural structure of a
complex algebraic variety (Proposition 6.16 \cite{abv}).  By
definition, the set of all geometric parameters is the disjoint
union
$$X({^\vee}G^{\Gamma}) = \coprod_{\mathcal{O}} X(\mathcal{O}, {^\vee}G^{\Gamma}).$$

The dual
group ${^\vee}G$ acts on geometric parameters by conjugation and this
action defines the notion of equivalence for geometric parameters.
According to Proposition 6.17 \cite{abv}, the
set of equivalence classes of geometric parameters is in bijection with
the equivalence classes of the aforementioned L-parameters 
(ignoring the concept of relevance).

The local Langlands Correspondence, as originally conceived, is a
bijection between (equivalence classes of) L-parameters and L-packets.
Adams, Barbasch and Vogan  refine and extend
the local Langlands Correspondence to a bijection between (equivalence
classes of) complete geometric parameters and (equivalence
classes of) representations of strong real forms.  They do this by
supplementing each equivalence class of a geometric parameter
with the representation of a finite group.  To be  precise, let $x
= (y, \Lambda) \in X(\mathcal{O}, {^\vee}G^{\Gamma})$  be a
geometric parameter and $S = {^\vee}G \cdot x$ be its
equivalence class.  Let ${^\vee}G_{x}$ be the isotropy group of $x$
and ${^\vee}G^{alg}_{x}$ be the preimage of this isotropy group in
the universal algebraic cover 
\begin{equation}
\label{algcover}
1 \rightarrow \pi_{1}({^\vee}G)^{alg} \rightarrow {^\vee}G^{alg}
\rightarrow {^\vee}G \rightarrow 1.
\end{equation}
The finite group in question is the component group 
$$  A_{x}^{loc, alg}= 
{^\vee}G^{alg}_{x}/ ({^\vee}G^{alg}_{x})_{0}$$
(Definition 7.6
\cite{abv}).  
Experts
will recognize that this group is an enlargement of the usual Langlands
component group of an L-parameter.  It is therefore fitting to call
this enlargement the \emph{Langlands component group for} $x$ (or
$S$).  It is important to realize that $A_{x}^{loc, alg}$ is abelian
(page 61 \cite{abv}).
It therefore makes sense to identify $A_{S}^{loc,alg}$ with 
$A_{x}^{loc, alg}$ and speak of a representation of $A_{S}^{loc,alg}$.

The geometric parameter $x = (y,\Lambda)$ becomes a \emph{(local)
  complete geometric parameter} if it is paired with an irreducible
representation  $\tau$ of $A_{S}^{loc, alg}$.  As $A_{S}^{loc, alg}$
is  abelian, the representation $\tau$ is
actually a character.  A \emph{complete geometric parameter} is a pair
of the form 
$(S, \tau \circ \mathrm{Int}({^\vee}G^{alg}))$, in other words an
equivalence class of a local complete geometric parameter.  The set of
complete geometric parameters is denoted by $\Xi({^\vee}G^{\Gamma})$.

The  extension of the original Langlands Correspondence for real
groups takes the shape of a bijection
\begin{equation}
\label{llc1}
\Pi(G/\mathbb{R}) \stackrel{LLC}{\longleftrightarrow}
\Xi({^\vee}G^{\Gamma})
\end{equation}
in Theorem 1.18 \cite{abv}.  It is an extension in the sense that it
combines the traditional Langlands correspondence for all inner forms of a
quasisplit  form  into a single bijection.

There are two important variants of (\ref{llc1}) in \cite{abv} which we sketch
briefly.  Correspondence (\ref{llc1}) is only
valid for an E-group $^{\vee}G^{\Gamma}$ whose second invariant $z \in
Z({^\vee}G)^{\theta_{Z}}$
is trivial.  When $z$ is non-trivial,
rather than considering representations of a strong real form
$G(\mathbb{R}, \delta)$, one must consider representations of a
certain profinite cover
\begin{equation}
\label{cancov}
1 \rightarrow \pi_{1}(G)^{can} \rightarrow
G(\mathbb{R},\delta)^{can} \rightarrow G(\mathbb{R},\delta)
\rightarrow 1
\end{equation}
(Definition 10.3 \cite{abv}) whose restrictions to $\pi_{1}(G)^{can}$ are related
to $z$.  The set of (equivalence classes of) these \emph{canonical projective
  representations of type} $z$ is denoted by $\Pi^{z}(G/\mathbb{R})$.
For non-trivial second invariant $z$, 
correspondence (\ref{llc1}) is expressed 
as a bijection
\begin{equation}
\label{llc2}
\Pi^{z}(G/\mathbb{R}) \stackrel{LLC}{\longleftrightarrow}
\Xi^{z}({^\vee}G^{\Gamma}) = \Xi({^\vee}G^{\Gamma})
\end{equation}
(Theorem 10.4 \cite{abv}).

One may consider a quotient $Q$ of the algebraic fundamental
group $\pi({^\vee}G)^{alg}$ in (\ref{algcover}) and use the smaller
cover
$$1 \rightarrow Q \rightarrow {^\vee}G^{Q} \rightarrow {^\vee}G
\rightarrow 1$$
in place of $^{\vee}G^{alg}$ to define a \emph{complete geometric
  parameter of type} $Q$ (Definition 7.6 \cite{abv}).  The difference
here is that the representation in the complete geometric
parameter is a representation of a group
$$A_{S}^{loc, Q} = A_{x}^{loc, Q} = 
{^\vee}G^{Q}_{x}/ ({^\vee}G^{Q}_{x})_{0}$$
The set of
(equivalence classes of) these parameters is denoted by
$\Xi^{z}({^\vee}G^{\Gamma})^{Q}$.  

Looking at (\ref{llc2}) one expects that
$\Xi^{z}({^\vee}G^{\Gamma})^{Q}$ corresponds bijectively to 
a subset of $\Pi^{z}(G/\mathbb{R})$. To say what this subset is
requires more details about $Q$. The quotient $Q$ is obtained from a
closed subgroup $K_{Q}$ 
\begin{equation}
\label{qq}
1 \rightarrow K_{Q} \rightarrow \pi_{1}({^\vee}G)^{alg} \rightarrow Q
\rightarrow 1.
\end{equation}
Let $\hat{Q}$
be the character group of $Q$. One may view $\hat{Q}$ as the subgroup
of characters of $\pi_{1}({^\vee}G)^{alg}$ which are trivial
on $K_{Q}$.  By Lemma 10.9 (a) \cite{abv}, there is an isomorphism
between the characters of $\pi_{1}({^\vee}G)^{alg}$ and the
finite-order elements in $Z(G)$.  Using this isomorphism, the character
group $\hat{Q}$ may be identified with a finite subgroup $J$ of
$Z(G)$.  Define  $\Pi^{z}(G,\mathbb{R})_{J} =
\Pi^{z}(G/\mathbb{R})_{\hat{Q}}$ to consist of those $(\pi,\delta) \in
\Pi^{z}(G/\mathbb{R})$ satisfying $\delta^{2} \in z J$ ($z$ is the
second invariant of $(G,\mathcal{W})$).  Theorem 10.11 \cite{abv}
broadens (\ref{llc2}) to include bijections of the shape
\begin{equation}
\label{llc3}
\Pi^{z}(G/\mathbb{R})_{\hat{Q}} \stackrel{LLC}{\longleftrightarrow}
\Xi^{z}({^\vee}G^{\Gamma})^{Q}.
\end{equation}

\section{Automorphisms}
\label{auts}

The purpose of this section is to show how automorphisms of  real
reductive groups may be introduced into the
framework of \cite{abv}.  We continue with the notation and
assumptions of Section \ref{recall}.
\begin{definition}[]
\label{wextaut}
An \emph{automorphism of a weak extended group} $G^{\Gamma}$ is a group 
automorphism $\vartheta^{\Gamma}$
of $G^{\Gamma}$ whose restriction $\vartheta = \vartheta^{\Gamma}_{|G}$  is
a holomorphic (algebraic) automorphism of $G$.    Such an automorphism
is an \emph{automorphism of an extended group} $(G^{\Gamma}, \mathcal{W})$
if the Whittaker datum $\mathcal{W} = G \cdot (\delta_{0}, N, \chi)$ is
equal to $G \cdot (\vartheta^{\Gamma}(\delta_{0}), \vartheta(N), \chi
\circ \vartheta^{-1})$.  
\end{definition}

\subsection{Characterizations of automorphisms}

The next two propositions characterize the relationship between automorphisms of
$G$ and automorphisms of $G^{\Gamma}$.

\begin{prop}
Suppose $\vartheta^{\Gamma}$ is an automorphism of a weak extended group
$G^{\Gamma}$ with invariants $(a,\bar{z})$, 
and let $\vartheta = \vartheta^{\Gamma}_{|G}$.  Then
\begin{enumerate}
\item The automorphism $\vartheta$ passes to an automorphism
  $\Psi_{0}(\vartheta)$ of  $\Psi_{0}(G)$ which commutes with $a$.


\item The automorphism $\vartheta$ passes to an automorphism 
$\bar{\vartheta}$ of $Z(G)^{\sigma_{Z}}/
(1+\sigma_{Z}) Z(G)$ satisfying $\bar{\vartheta}(\bar{z}) = \bar{z}$.

\end{enumerate}
\label{thetares}
\end{prop}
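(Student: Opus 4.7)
The plan is to extract both claims from a single intertwining identity forced by the hypothesis $\vartheta^{\Gamma}\in\mathrm{Aut}(G^{\Gamma})$. Since $\vartheta^{\Gamma}$ preserves the normal subgroup $G$, it also carries $G^{\Gamma}-G$ to itself. For $\delta\in G^{\Gamma}-G$ and $g\in G$, applying $\vartheta^{\Gamma}$ to $\delta g\delta^{-1}$ and using $\vartheta^{\Gamma}_{|G}=\vartheta$ yields, in the notation $\sigma(\delta')=\mathrm{Int}(\delta')_{|G}$,
\begin{equation*}
\sigma(\vartheta^{\Gamma}(\delta)) \;=\; \vartheta\circ \sigma(\delta)\circ \vartheta^{-1}.
\end{equation*}

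For (1), I would first note that any holomorphic automorphism of $G$ passes functorially to an automorphism $\Psi_{0}(\vartheta)$ of the canonical based root datum. By definition, the first invariant $a$ is the automorphism of $\Psi_{0}(G)$ induced by $\sigma(\delta)$ for any $\delta\in G^{\Gamma}-G$; it is independent of this choice because replacing $\delta$ by $h\delta$ alters $\sigma(\delta)$ by $\mathrm{Int}(h)$, and inner automorphisms act trivially on $\Psi_{0}(G)$. Applying $\Psi_{0}(-)$ to the intertwining identity gives $a=\Psi_{0}(\vartheta)\,a\,\Psi_{0}(\vartheta)^{-1}$, which is the desired commutativity.

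For (2), restrict the same identity to $Z(G)$. Because $Z(G)$ is central, $\sigma_{Z}=\mathrm{Int}(\delta)_{|Z(G)}$ is independent of the choice of $\delta\in G^{\Gamma}-G$, so the identity becomes $\vartheta_{|Z(G)}\circ \sigma_{Z}=\sigma_{Z}\circ \vartheta_{|Z(G)}$. Hence $\vartheta$ preserves both $Z(G)^{\sigma_{Z}}$ and $(1+\sigma_{Z})Z(G)$, and descends to an automorphism $\bar{\vartheta}$ of the quotient. To verify $\bar{\vartheta}(\bar z)=\bar z$, fix a quasisplit representative $\delta_{q}\in G^{\Gamma}-G$ with $\delta_{q}^{2}\in Z(G)^{\sigma_{Z}}$ a representative of $\bar z$. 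Then $\vartheta^{\Gamma}(\delta_{q})^{2}=\vartheta(\delta_{q}^{2})$ again lies in $Z(G)^{\sigma_{Z}}$, and the real form $G(\mathbb{R},\vartheta^{\Gamma}(\delta_{q}))=\vartheta(G(\mathbb{R},\delta_{q}))$ is again quasisplit because $\vartheta$ is a holomorphic automorphism of $G$. The well-definedness of $\bar z$ across quasisplit strong real forms (Proposition 3.6 \cite{abv}) then forces the coset of $\vartheta(\delta_{q}^{2})$ to equal $\bar z$, which is precisely $\bar{\vartheta}(\bar z)=\bar z$.

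The one step requiring genuine verification, and the point I would expect to be most delicate, is the claim that $\vartheta$ sends a quasisplit strong real form to a quasisplit strong real form. I would argue this from the standard criterion that quasisplitness of $G(\mathbb{R},\delta_{q})$ is equivalent to the existence of a $\sigma(\delta_{q})$-stable Borel subgroup of $G$: pushing such a Borel forward by $\vartheta$ and invoking the intertwining identity produces a $\sigma(\vartheta^{\Gamma}(\delta_{q}))$-stable Borel, so $\vartheta(G(\mathbb{R},\delta_{q}))$ is quasisplit. Everything else is bookkeeping against the classification of (weak) extended groups by invariants recalled in Section \ref{recall} and the functoriality of $\Psi_{0}$.
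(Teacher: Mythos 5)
Your proof is correct and takes essentially the same route as the paper: both derive the intertwining identity $\sigma(\vartheta^{\Gamma}(\delta)) = \vartheta\circ\sigma(\delta)\circ\vartheta^{-1}$, apply $\Psi_{0}$ to get the commutation with $a$, restrict to $Z(G)$ to get $\bar{\vartheta}$, and verify $\bar{\vartheta}(\bar z) = \bar z$ by pushing a $\sigma(\delta_{q})$-stable Borel forward under $\vartheta$ to show $\vartheta^{\Gamma}(\delta_{q})$ is again a quasisplit strong real form, then invoking the well-definedness of the second invariant across quasisplit representatives. The only cosmetic difference is the citation used for the last step (you cite Proposition 3.6 of \cite{abv}, the paper cites Corollary 2.16 (a) of \cite{abv}).
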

\begin{proof}
The construction of the automorphism $\Psi_{0}(\vartheta)$ is
well-known (see Proposition 2.11 \cite{abv}).  To see that it commutes
with $a$ we note that $a = \Psi_{0}(\sigma(\delta)) =
\Psi_{0}(\mathrm{Int}(\delta)_{|G})$ for a(ny)  $\delta \in G^{\Gamma} -
G$ (Proposition 2.12 and Proposition 2.16 \cite{abv}).  It is easily
computed that 
$$\mathrm{Int}(\vartheta^{\Gamma}(\delta))_{|G}  = 
\vartheta \circ \mathrm{Int}(\delta)_{|G} \circ \vartheta^{-1}.$$
Since the elements $\vartheta^{\Gamma}(\delta)$ and $\delta$ both belong to 
$G^{\Gamma}-G$ we have
$$ a =
\Psi_{0}(\mathrm{Int}(\delta)_{|G})=
\Psi_{0}(\mathrm{Int}(\vartheta^{\Gamma}(\delta))_{|G}) = \Psi_{0}(\vartheta
\circ \mathrm{Int}(\delta)_{|G} \circ 
\vartheta^{-1}).$$
It follows from the definitions that the expression on the right is
equal to 
$$\Psi_{0}(\vartheta) \circ \Psi_{0}(\mathrm{Int}(\delta)_{|G}) \circ
\Psi_{0}(\vartheta)^{-1}  = \Psi_{0}(\vartheta) \circ
a \circ 
\Psi_{0}(\vartheta)^{-1}$$
(\emph{cf.} page 34 \cite{abv}).  This proves the first assertion.
Similar arguments allow us to deduce  that 
$$\sigma_{Z} = \mathrm{Int}(\delta)_{|Z(G)} = \vartheta \circ
\mathrm{Int}(\delta) \circ\vartheta^{-1}_{|Z(G)}=  \vartheta \circ
\sigma_{Z} \circ \vartheta^{-1}_{|Z(G)}.$$
This equation justifies the existence of the
automorphism $\bar{\vartheta}$.
For the final assertion, recall from Section \ref{recall}
that $\bar{z}$ is
equal to the coset of $\delta_{q}^{2}$ in $Z(G)^{\sigma_{Z}}/
(1+\sigma_{Z}) Z(G)$, where 
$\sigma(\delta_{q})$ is a quasisplit real form of $G$ in the inner class
defined by $a$.  Let $B$ be a Borel subgroup preserved by $\sigma(\delta_{q})$
and set $\delta_{q}' = \vartheta^{\Gamma}(\delta_{q})$.  Then
$$\vartheta(B) = \vartheta \circ \mathrm{Int}(\delta_{q}) \circ
\vartheta^{-1} (\vartheta(B)) = \mathrm{Int}(\vartheta^{\Gamma}(\delta_{q}))
(\vartheta(B)) = \sigma(\delta_{q}') (\vartheta(B))$$
implies that $\sigma(\delta_{q}')$ is also a quasisplit form.  Therefore,
according to
Corollary 2.16 (a) \cite{abv}, 
the element $\bar{z}$ is also equal to the coset of 
 $(\delta_{q}')^{2} = \vartheta(\delta_{q}^{2})$ in $Z(G)^{\sigma_{Z}}/
(1+\sigma_{Z}) Z(G)$.  This is equivalent to $\bar{\vartheta}(\bar{z}) = \bar{z}$.
\end{proof}

\begin{prop}
\label{thetares1}
Suppose $\vartheta$ is a holomorphic automorphism of $G$, and $G^{\Gamma}$ 
is a weak extended group for $G$ with invariants $(a, \bar{z})$.  Suppose
further that $\delta_{q} \in G^{\Gamma} - G$ yields a quasisplit form. 
If $\Psi_{0}(\vartheta)$ commutes with $a$
 then $\vartheta$ passes to an automorphism $\bar{\vartheta}$ of
$Z(G)^{\sigma_{Z}}/
(1+\sigma_{Z}) Z(G)$.  If in addition $\bar{\vartheta}(\bar{z}) = \bar{z}$, then
 $\vartheta$ extends to an automorphism $\vartheta^{\Gamma}$ of $G^{\Gamma}$.
Any two such extensions differ at their value on $\delta_{q}$ 
by a multiple of an element in $\ker(1+ \sigma_{Z})$.
\end{prop}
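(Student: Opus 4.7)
The plan proceeds in three stages corresponding to the three assertions of the proposition.

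For the first claim, the key is that $\vartheta$ commutes with $\sigma_{Z}$ on $Z(G)$. The automorphisms $\sigma(\delta_{q})$ and $\vartheta \circ \sigma(\delta_{q}) \circ \vartheta^{-1}$ of $G$ induce on $\Psi_{0}(G)$, respectively, $a$ and $\Psi_{0}(\vartheta) \circ a \circ \Psi_{0}(\vartheta)^{-1}$, and these agree by hypothesis. Hence the two automorphisms of $G$ differ by an inner one; since inner automorphisms act trivially on $Z(G)$, one gets $\vartheta \circ \sigma_{Z} \circ \vartheta^{-1} = \sigma_{Z}$. Consequently $\vartheta$ preserves both $Z(G)^{\sigma_{Z}}$ and $(1+\sigma_{Z})Z(G)$, and descends to the claimed automorphism $\bar{\vartheta}$ of the quotient.

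For the existence of $\vartheta^{\Gamma}$, the strategy is to build $\vartheta^{\Gamma}(\delta_{q})$ explicitly as $y \delta_{q}$ with $y \in G$, satisfying the intertwining condition $\mathrm{Int}(y\delta_{q})_{|G} = \vartheta \circ \sigma(\delta_{q}) \circ \vartheta^{-1}$ and the squaring condition $(y\delta_{q})^{2} = \vartheta(\delta_{q}^{2})$. I would choose a pinning of $G$ preserved by $\sigma(\delta_{q})$, which is available because $\sigma(\delta_{q})$ is an involution (since $\delta_{q}^{2} \in Z(G)$) coming from a quasisplit form. Let $\tilde{\vartheta}$ be the pinning-preserving automorphism of $G$ inducing $\Psi_{0}(\vartheta)$, and write $\vartheta = \mathrm{Int}(g_{1}) \circ \tilde{\vartheta}$ for some $g_{1} \in G$. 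Because the pinning-preserving automorphisms form a subgroup of $\mathrm{Aut}(G)$ isomorphic to $\mathrm{Aut}\, \Psi_{0}(G)$, and since $\Psi_{0}(\vartheta)$ commutes with $a$, the automorphisms $\tilde{\vartheta}$ and $\sigma(\delta_{q})$ commute on $G$. A direct calculation then gives $y_{0} := g_{1}\, \sigma(\delta_{q})(g_{1})^{-1}$ as a valid intertwiner, and moreover
\[
y_{0}\, \sigma(\delta_{q})(y_{0}) = g_{1}\, \sigma(\delta_{q})^{2}(g_{1})^{-1} = 1,
\]
using $\sigma(\delta_{q})^{2} = \mathrm{Int}(\delta_{q}^{2})_{|G} = \mathrm{id}_{G}$. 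Any $y = c y_{0}$ with $c \in Z(G)$ still satisfies the intertwining condition, and $(c y_{0} \delta_{q})^{2} = (1+\sigma_{Z})(c)\, \delta_{q}^{2}$. The hypothesis $\bar{\vartheta}(\bar{z}) = \bar{z}$ supplies a $c \in Z(G)$ with $(1+\sigma_{Z})(c) = \vartheta(\delta_{q}^{2}) \delta_{q}^{-2}$, which forces $(c y_{0} \delta_{q})^{2} = \vartheta(\delta_{q}^{2})$. Defining $\vartheta^{\Gamma}(\delta_{q}) := c y_{0} \delta_{q}$ and $\vartheta^{\Gamma}(g \delta_{q}) := \vartheta(g)\, \vartheta^{\Gamma}(\delta_{q})$ for $g \in G$, the intertwining and squaring identities are exactly what is needed to check multiplicativity on the nontrivial coset of $G$ in $G^{\Gamma}$.

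For uniqueness, any two extensions agree on $G$ and send $\delta_{q}$ to elements implementing the same conjugation of $G$, so they differ by a factor $c \in Z(G)$; demanding that both squares equal $\vartheta(\delta_{q}^{2})$ forces $(1+\sigma_{Z})(c) = 1$, i.e., $c \in \ker(1+\sigma_{Z})$. The main obstacle in the existence argument is the choice of $y_{0}$: without the pinning-preserving decomposition, the quantity $y\, \sigma(\delta_{q})(y)$ appears as an a priori nontrivial class in $Z(G)^{\sigma_{Z}}/(1+\sigma_{Z}) Z(G)$, and identifying this class directly as $\bar{\vartheta}(\bar{z}) \bar{z}^{-1}$ in order to invoke the hypothesis is the technical point that the pinning calculation sidesteps.
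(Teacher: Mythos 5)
Your proof is correct, but it takes a genuinely different route from the paper's. The paper obtains the conjugating element by invoking Propositions 2.12 and 2.14 of \cite{abv}: from the equality $\Psi_{0}(\sigma_{q}) = \Psi_{0}(\vartheta\sigma_{q}\vartheta^{-1})$ one concludes that the two antiholomorphic involutions are equivalent real forms, and the classification theorem then supplies a $g \in G$ and $\delta_{1} = g\delta_{q}g^{-1} \in G^{\Gamma}$ with $\sigma(\delta_{1}) = \vartheta\sigma_{q}\vartheta^{-1}$; since conjugation preserves the central square, $\delta_{1}^{2} = \delta_{q}^{2}$, which is exactly the technical point you flag as nontrivial. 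You instead sidestep the classification by fixing a pinning preserved by $\sigma(\delta_{q})$, decomposing $\vartheta = \mathrm{Int}(g_{1})\circ\tilde{\vartheta}$ with $\tilde{\vartheta}$ pinning-preserving, using commutativity of $\Psi_{0}(\vartheta)$ with $a$ to get $\tilde{\vartheta}\sigma(\delta_{q})\tilde{\vartheta}^{-1}=\sigma(\delta_{q})$, and then producing the intertwiner $y_{0}=g_{1}\sigma(\delta_{q})(g_{1})^{-1}$ explicitly, with $y_{0}\,\sigma(\delta_{q})(y_{0})=1$ verified by a direct computation using $\sigma(\delta_{q})^{2}=\mathrm{id}$. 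In effect you are re-proving the relevant special case of Proposition 2.14 \cite{abv} rather than citing it: the gain is a self-contained and explicit construction of the cocycle $y_{0}$; the cost is a slightly delicate commutation step (you treat $\sigma(\delta_{q})$ as if the pinning-preserving subgroup bijection applied uniformly, whereas it is antiholomorphic — the standard fix is to write $\sigma(\delta_{q})=\sigma_{s}\circ\mu$ with $\sigma_{s}$ the split real structure on the pinning and $\mu$ holomorphic pinning-preserving, observe $a=\Psi_{0}(\mu)$, and commute $\tilde{\vartheta}$ with each factor separately). The remaining parts — using $\bar{\vartheta}(\bar{z})=\bar{z}$ to choose $c$ with $(1+\sigma_{Z})(c)=\vartheta(\delta_{q}^{2})\delta_{q}^{-2}$, the multiplicativity verification on the nontrivial coset, and the uniqueness argument forcing $c\in\ker(1+\sigma_{Z})$ — match the paper's argument essentially verbatim.
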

\begin{proof}
Suppose  $\Psi_{0}(\vartheta)$ commutes with $a$. Then the existence of
$\bar{\vartheta}$ follows as in the proof of the previous
proposition.  The commutativity of $\Psi_{0}(\vartheta)$ and
$a$ also implies
\begin{equation}
\label{acommute}
\Psi_{0}(\sigma(\delta_{q})) = \Psi_{0}(\vartheta \circ
\sigma(\delta_{q}) \circ \vartheta^{-1}).
\end{equation}
Suppose $\bar{\vartheta}(\bar{z}) = \bar{z}$.  By (\ref{acommute})
and Proposition 2.12 \cite{abv},
the form $\sigma_{q} = \sigma(\delta_{q})$ is
equivalent to the form $\vartheta \sigma_{q} \vartheta^{-1}$.  It 
follows from Proposition 2.14 \cite{abv} that there exists $\delta_{1}
\in G^{\Gamma}-G$, $g \in G$ and $z_{1} \in 
Z(G)$ such that $\sigma(\delta_{1}) =\vartheta \circ \sigma_{q} 
\circ \vartheta^{-1}$ and 
$\delta_{1} = z_{1} g \delta_{q} g^{-1}$.  By replacing $\delta_{1}$
with $z_{1}^{-1} \delta_{1}$ we may assume without loss of generality
that $z_{1}$ is trivial and $\delta_{1} = g\delta_{q} g^{-1}$. 
Let $z = \delta_{q}^{2} = \delta_{1}^{2}$ be a coset representative of $\bar{z}$
(Corollary 2.16 \cite{abv}).  
Then $\bar{\vartheta}(\bar{z}) = \bar{z}$ amounts to
$\vartheta(z) = z_{2} \sigma_{Z}(z_{2}) \, z$ for some $z_{2} \in
Z(G)$. Set $\vartheta^{\Gamma}(\delta_{q}) = z_{2} \delta_{1}$ so that
$\vartheta^{\Gamma}(\delta_{q})^{2} = \vartheta(z)$ and $\vartheta
\circ \sigma_{q} \circ
 \vartheta^{-1} = \sigma(\vartheta^{\Gamma}(\delta_{q}))$.  To prove that
$\vartheta^{\Gamma}$ defines an extension of  $\vartheta$ to an automorphism 
of $G^{\Gamma}$ we use the equations of (2.17)(b) \cite{abv}.  We compute
\begin{align*}
\vartheta^{\Gamma}(g_{1}\delta_{q} g_{2} \delta_{q}) & = \ \vartheta (
g_{1} \sigma_{q}(g_{2}) z) \\
& = \vartheta(g_{1}) \ \vartheta \circ \sigma_{q} \circ \vartheta^{-1}(\vartheta
(g_{2})) \ \vartheta^{\Gamma}(\delta_{q})^{2}\\
& = \vartheta^{\Gamma}(g_{1} \delta_{q}) \ \vartheta^{\Gamma}(g_{2} \delta_{q})
\end{align*}
and 
\begin{align*}
\vartheta^{\Gamma}(g_{1} \delta_{q} g_{2}) & = \vartheta^{\Gamma}(g_{1} \sigma_{q}
(g_{2}) \delta_{q})\\
& = \vartheta(g_{1}) \ \vartheta \circ \sigma_{q} \circ \vartheta^{-1}(\vartheta
(g_{2})) \ \vartheta^{\Gamma}(\delta_{q})\\
& = \vartheta^{\Gamma}(g_{1}\delta_{q}) \ \vartheta^{\Gamma}(g_{2}).
\end{align*}
For the final assertion, observe that if $\vartheta^{\Gamma}(\delta_{q}) = 
x z_{2} \delta_{1}$ for some $x \in G$ then $x$ must belong to $Z(G)$
for $\vartheta
\circ \sigma_{q} \circ
 \vartheta^{-1} = \sigma(\vartheta^{\Gamma}(\delta_{q}))$ to remain
 true. Furthermore  $x\sigma_{Z}(x) = 1$ 
for the required identity  $\vartheta^{\Gamma}(\delta_{q})^{2} = \vartheta(z)$.
\end{proof}

There is a parallel description of automorphisms for E-groups.
\begin{definition}
Suppose $^{\vee}{G}^{\Gamma}$ is a weak E-group for $G$. 
 An \emph{automorphism} $\varsigma$ of $^{\vee}G^{\Gamma}$ is simply
 a holomorphic (algebraic) group automorphism.  Such an automorphism
 is an \emph{automorphism of an E-group} 
$(^{\vee}G^{\Gamma}, \mathcal{D})$ for $G$ if the conjugacy class
 $\mathcal{D}$ is preserved by $\varsigma$.
\label{sigmares}
\end{definition}

Since any algebraic automorphism of ${^\vee}G^{\Gamma}$ preserves the
identity component ${^\vee}G$ this definition is analogous to
Definition \ref{wextaut}.   There are also analogues of Propositions
\ref{thetares} and \ref{thetares1} with simpler proofs.   The element
$\delta_{q} \in G^{\Gamma}-G$ in the proof of Proposition
\ref{thetares1} needs only to be  replaced by an 
element ${^\vee}\delta \in {^\vee}G^{\Gamma} - {^\vee}G$ whose inner automorphism is 
\emph{distinguished} (\emph{i.e.} preserves a splitting).    One 
may then use the 
fact that distinguished automorphisms with respect to different splittings
are $\mathrm{Int}({^\vee}G)$-conjugate, just as quasisplit forms in an 
inner class are
$\mathrm{Int}(G)$-conjugate.  The remaining details are left to the
interested reader.

\begin{prop}
Suppose $\varsigma$ is an automorphism of the weak E-group ${^\vee}G^{\Gamma}$
for $G$ with invariants $(a,\bar{z})$. 
Then 
\begin{equation}
\label{abequ}
a = \Psi_{0}(\mathrm{Int}({^\vee}\delta)_{|{^\vee}G}) = 
\Psi_{0}(\varsigma \circ \mathrm{Int}({^\vee}\delta)
\circ \varsigma^{-1}_{|{^\vee}G}) = \Psi_{0}(\varsigma_{|{^\vee}G}) \circ a \circ
\Psi_{0} (\varsigma_{|{^\vee}G})^{-1}
\end{equation}
for any ${^\vee} \delta \in {^\vee}G^{\Gamma} - ^{\vee}G$, and $\varsigma$
passes to an automorphism of $\bar{\varsigma}$ of 
$Z({^\vee}G)^{\theta_{Z}}/ (1+\theta_{Z}) Z({^\vee}G)$ satisfying 
$\bar{\varsigma}(\bar{z}) = \bar{z}$.
Conversely, any automorphism of ${^\vee}G$, satisfying the above two equations
for an element ${^\vee}\delta$ with distinguished inner automorphism, extends to
an automorphism of ${^\vee}G^{\Gamma}$.  Moreover any two such extensions
differ by a multiple of an element in $\ker(1+\theta_{Z})$.
\label{sigmares1}
\end{prop}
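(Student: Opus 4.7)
The plan is to mimic the proofs of Propositions~\ref{thetares} and~\ref{thetares1}, using an element ${^\vee}\delta \in {^\vee}G^{\Gamma} - {^\vee}G$ whose inner automorphism is distinguished in place of the quasisplit $\delta_{q}$, and exploiting the fact (noted after Definition~\ref{sigmares}) that any two distinguished inner automorphisms are $\mathrm{Int}({^\vee}G)$-conjugate.

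For the forward direction, I first note that any algebraic automorphism $\varsigma$ of ${^\vee}G^{\Gamma}$ preserves the identity component ${^\vee}G$, and therefore maps the nontrivial coset to itself; in particular $\varsigma({^\vee}\delta) \in {^\vee}G^{\Gamma} - {^\vee}G$. The chain (\ref{abequ}) then follows at once from the identity
\[ \mathrm{Int}(\varsigma({^\vee}\delta))_{|{^\vee}G} = \varsigma \circ \mathrm{Int}({^\vee}\delta) \circ \varsigma^{-1}_{|{^\vee}G} \]
together with the observation that $\Psi_{0}$ of the inner automorphism of any element of ${^\vee}G^{\Gamma} - {^\vee}G$ yields the first invariant $a$. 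Restricting this identity to $Z({^\vee}G)$ gives $\theta_{Z} = \varsigma \circ \theta_{Z} \circ \varsigma^{-1}$ on $Z({^\vee}G)$, so $\varsigma_{|Z({^\vee}G)}$ descends to an automorphism $\bar{\varsigma}$ of $Z({^\vee}G)^{\theta_{Z}}/(1+\theta_{Z})Z({^\vee}G)$. To see $\bar{\varsigma}(\bar{z}) = \bar{z}$, I would choose ${^\vee}\delta$ with distinguished inner automorphism so that ${^\vee}\delta^{2}$ is a representative of $\bar{z}$; since $\varsigma$ sends splittings of ${^\vee}G$ to splittings, $\varsigma({^\vee}\delta)$ also has distinguished inner automorphism, and hence $\varsigma({^\vee}\delta)^{2} = \varsigma({^\vee}\delta^{2})$ represents $\bar{z}$ as well.

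For the converse, suppose $\varsigma \in \mathrm{Aut}({^\vee}G)$ satisfies both conditions and fix ${^\vee}\delta$ with distinguished inner automorphism. Equation (\ref{abequ}) shows that $\varsigma \circ \mathrm{Int}({^\vee}\delta) \circ \varsigma^{-1}$ is again a distinguished automorphism of ${^\vee}G$ with first invariant $a$, so the conjugacy of distinguished inner automorphisms supplies $g \in {^\vee}G$ with $\varsigma \circ \mathrm{Int}({^\vee}\delta) \circ \varsigma^{-1} = \mathrm{Int}({^\vee}\delta_{1})_{|{^\vee}G}$ for ${^\vee}\delta_{1} = g\, {^\vee}\delta\, g^{-1}$. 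The hypothesis $\bar{\varsigma}(\bar{z}) = \bar{z}$ then provides $z_{2} \in Z({^\vee}G)$ with $\varsigma({^\vee}\delta^{2}) = z_{2}\,\theta_{Z}(z_{2})\, {^\vee}\delta^{2}$. Putting $\varsigma^{\Gamma}({^\vee}\delta) = z_{2}\, {^\vee}\delta_{1}$, a short calculation using ${^\vee}\delta^{2} \in Z({^\vee}G)$ confirms both $\varsigma^{\Gamma}({^\vee}\delta)^{2} = \varsigma({^\vee}\delta^{2})$ and $\mathrm{Int}(\varsigma^{\Gamma}({^\vee}\delta))_{|{^\vee}G} = \varsigma \circ \mathrm{Int}({^\vee}\delta) \circ \varsigma^{-1}$. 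Verifying that $\varsigma^{\Gamma}$ is multiplicative on all of ${^\vee}G^{\Gamma}$ then follows the two computations at the end of the proof of Proposition~\ref{thetares1} verbatim.

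For the uniqueness clause, any other extension must have the form ${^\vee}\delta \mapsto x z_{2}\, {^\vee}\delta_{1}$ with $x \in {^\vee}G$; agreement of the induced automorphism on ${^\vee}G$ forces $x \in Z({^\vee}G)$, and agreement of squares then forces $x\,\theta_{Z}(x) = 1$, i.e., $x \in \ker(1+\theta_{Z})$. The main obstacle will be confirming that distinguished inner automorphisms play exactly the role that quasisplit real forms do in Proposition 2.14 \cite{abv}, so that two distinguished inner automorphisms with the same first invariant are known to be $\mathrm{Int}({^\vee}G)$-conjugate up to a central factor; once this is granted, everything else is a direct transcription of the extended-group proofs.
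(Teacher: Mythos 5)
Your proposal is correct and matches the paper's intended argument exactly. The paper does not give a detailed proof of Proposition~\ref{sigmares1}; immediately after Definition~\ref{sigmares} it simply remarks that the proofs are ``simpler'' analogues of those of Propositions~\ref{thetares} and~\ref{thetares1}, obtained by replacing $\delta_{q}$ with an element ${^\vee}\delta$ whose inner automorphism is distinguished and using that distinguished automorphisms with respect to different splittings are $\mathrm{Int}({^\vee}G)$-conjugate. That is precisely what you do, and the details you fill in are right: deriving (\ref{abequ}) from $\mathrm{Int}(\varsigma({^\vee}\delta))_{|{^\vee}G} = \varsigma \circ \mathrm{Int}({^\vee}\delta) \circ \varsigma^{-1}_{|{^\vee}G}$, getting $\bar{\varsigma}$ from the restriction to $Z({^\vee}G)$, using the square of a distinguished ${^\vee}\delta$ as the canonical representative of $\bar{z}$, and in the converse producing ${^\vee}\delta_{1} = g\,{^\vee}\delta\,g^{-1}$ and setting $\varsigma^{\Gamma}({^\vee}\delta) = z_{2}\,{^\vee}\delta_{1}$ with the same multiplicativity checks as in (2.17)(b) of \cite{abv}.

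Two small remarks. First, in the converse you write that (\ref{abequ}) ``shows'' $\varsigma \circ \mathrm{Int}({^\vee}\delta) \circ \varsigma^{-1}$ is distinguished; strictly speaking (\ref{abequ}) only gives that it has first invariant $a$, and the distinguished property comes from the separate fact that an algebraic automorphism $\varsigma$ of ${^\vee}G$ carries splittings to splittings. Second, your closing worry about conjugacy ``up to a central factor'' is unnecessary: the central element $z_{1}$ in the proof of Proposition~\ref{thetares1} appears because Proposition~2.14 of \cite{abv} deals with \emph{equivalent real forms} (which can be induced by strong real forms differing by a central factor), whereas in the E-group setting one works directly with \emph{automorphisms}, and the fact that distinguished automorphisms of ${^\vee}G$ with the same first invariant are $\mathrm{Int}({^\vee}G)$-conjugate (which the paper cites as known) gives $g$ with $\varsigma \circ \mathrm{Int}({^\vee}\delta) \circ \varsigma^{-1} = \mathrm{Int}(g{^\vee}\delta g^{-1})_{|{^\vee}G}$ outright, with no central ambiguity to absorb. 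Your proof in fact already sets ${^\vee}\delta_{1} = g\,{^\vee}\delta\,g^{-1}$ directly, so the concern was moot.
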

\begin{exmp}
The choice of extensions in Proposition \ref{sigmares1} is
parameterized by $\ker(1+\theta_{Z})$, and this kernel is related to
twisting by one-cocycles (2.1 \cite{ks}).  Suppose $\varsigma$
is an automorphism ${^\vee}G^{\Gamma}$.  By Theorem 6.2.2 \cite{weibel},
$$H^{1}(\Gamma, Z({^\vee}G)) \cong
\ker(1+\theta_{Z})/ (1-\theta_{Z}) Z({^\vee}G)$$
where $\Gamma$ acts on $Z({^\vee}G)$ by $\theta_{Z}$.  Let  
$\mathbf{a} \in H^{1}(\Gamma, Z({^\vee}
G))$ and  $\mathsf{a}$ be a  one-cocycle in the cohomology class 
$\mathbf{a}$.
By the above isomorphism, $\mathsf{a}$ may be
identified with $\mathsf{a}(\gamma) \in \ker(1+\theta_{Z})$, where
$\gamma \in \Gamma$  is the non-trivial element.
One may define another automorphism $\varsigma_{\mathsf{a}}$ of
${^\vee}G^{\Gamma}$ by following Proposition \ref{sigmares1}
(\emph{cf.} page 17 \cite{ks}).  More explicitly, let ${^\vee}\delta
\in {^\vee}G^{\Gamma} - {^\vee}G$ 
be an element such that $\mathrm{Int}({^\vee}\delta)$ fixes a
splitting.  Then define
$$\varsigma_{\mathsf{a}}(g\, {^\vee}\delta) = \mathsf{a}
(\gamma)^{-1}  \varsigma(g \, {^\vee}\delta) \mbox{ and }
\varsigma_{\mathsf{a}}(g) =  \varsigma(g),
 \ g \in
{^\vee}G.$$
This definition is valuable even when $\varsigma$ is trivial (see
Example \ref{24ex}).  
\label{cocycletwist}
\end{exmp}

We are assuming that ${^\vee}G^{\Gamma}$ is a weak E-group for $G$.
This means that the groups $G^{\Gamma}$ and ${^\vee}G^{\Gamma}$ have a common
 first invariant $a \in \mathrm{Aut}(\Psi_{0}(G)) \cong
 \mathrm{Aut}(\Psi_{0}({^\vee}G))$. 
It is natural to anticipate some sort of compatibility between an
automorphism of $G^{\Gamma}$ and an automorphism of ${^\vee}G^{\Gamma}$
stemming from the isomorphism $\mathrm{Aut}(\Psi_{0}(G)) \cong
 \mathrm{Aut}(\Psi_{0}({^\vee}G))$.  Before making this explicit,
 let us consider an example motivated by 1.2 \cite{arthurbook}.
\begin{exmp}
\label{autex}
Let $G = \mathrm{GL}_{N}$ together with the automorphism $\vartheta$ equal to
inverse-transpose composed with
$$\mathrm{Int}\left( \left[ \begin{smallmatrix}
0 & \cdots & 0 & 1\\
\vdots &  &\iddots
 &0\\
0 & 1& & \vdots\\
1 & 0& \cdots& 0 
\end{smallmatrix}\right] \right).$$  
We define  a weak extended group $G^{\Gamma}$ by taking $\delta_{q}
\in G^{\Gamma} - G$ 
to act on $G$ by complex conjugation.  This corresponds to the inner class of
the split form of $G$.  The
first invariant $a = \Psi_{0}(\sigma(\delta_{q}))$ and
second invariant $\bar{z} = \overline{\delta_{q}^{2}}$ are both trivial.  By
Proposition \ref{thetares1} (and its proof), $\vartheta$ extends to an 
automorphism $\vartheta^{\Gamma}$ fixing $\delta_{q}$.  Let $\mathcal{W} =
G \cdot (\delta_{q}, N, \chi)$, where $N$ is the upper-triangular
unipotent subgroup and 
$$\chi\left( \left[
\begin{smallmatrix}
1 & a_{1} & * & *\\
0 & 1 & a_{2} & *\\
\vdots & \dots & \ddots & *\\
0 & \cdots & 0 & 1
\end{smallmatrix}
\right]\right) = \prod_{j=1}^{n-1} \chi'(a_{j})$$
for some fixed non-trivial character $\chi'$ of $\mathbb{R}$.
One may verify that $(\vartheta^{\Gamma}(\delta_{q}), \vartheta(N), \chi \circ
\vartheta^{-1})$ is conjugate to $(\delta_{q}, N, \chi)$ under the
diagonal matrix 
$\mathrm{diag}(1,-1,1,-1,\ldots)$.  Consequently, $\vartheta^{\Gamma}$ is 
an automorphism of the extended group $(G^{\Gamma}, \mathcal{W})$.  A
slightly cleaner arrangement of essentially the same example is to
define $\vartheta$ as inverse-transpose composed with the inner
automorphism of
$$\tilde{J} = \mathrm{diag}(1,-1,1,-1,\ldots) \  \mathrm{Int}\left(
\left[ \begin{smallmatrix} 
0 & \cdots & 0 & 1\\
\vdots &  &\iddots
 &0\\
0 & 1& & \vdots\\
1 & 0& \cdots& 0 
  \end{smallmatrix}\right] \right),$$
in which case $(\vartheta^{\Gamma}(\delta_{q}), \vartheta(), \chi \circ
\vartheta^{-1})$ is actually equal to $(\delta_{q}, N, \chi)$ (see
(1.2.1) \cite{arthurbook}).  Let us continue with $\vartheta$ defined
in this way.  

The automorphism $\Psi_{0}(\vartheta)$ of the canonical based root
datum for $G$ transfers to 
an automorphism of the canonical based root datum for ${^\vee}G$ and 
the latter corresponds to an $\mathrm{Int}({^\vee}G)$-conjugacy class of
algebraic automorphisms of ${^\vee}G$ 
(Proposition 2.11 \cite{abv}).  In this
example $G = {^\vee}G = \mathrm{GL}_{N}$, and  $^{\vee}\vartheta = \vartheta$ is
an automorphism in this conjugacy class.  

We define  $^{\vee}G^{\Gamma} = {^\vee}G \times \Gamma$ to be
Langlands' L-group.  In this case the first and second 
invariants of ${^\vee}G^{\Gamma}$ are again trivial. 
By Proposition \ref{sigmares1}, there is an automorphism
${^\vee}\vartheta^{\Gamma}$ of ${^\vee}G^{\Gamma}$ which is trivial on $\Gamma$
and extends ${^\vee}\vartheta$.  Let
$\mathcal{D}$ be the singleton containing the non-trivial element of
$\Gamma$ in the direct product.  Then ${^\vee}\vartheta^{\Gamma}$ is an
automorphism of the E-group $({^\vee}G^{\Gamma}, \mathcal{D})$ for $G$.  
\end{exmp}

In this example the $\mathrm{Int}({^\vee}G)$-conjugacy class of
automorphisms  depends on a choice of splitting made in Proposition
2.11 \cite{abv}, and each automorphism in this class is
\emph{distinguished} in the sense that it preserves some splitting of
${^\vee}G$ (page 34 \cite{abv}).\footnote{This suggests that perhaps
  the dual object to $\vartheta$ ought to be 
the $\mathrm{Int}({^\vee}G)$-conjugacy class, and not a particular
representative.}  We say that an automorphism of ${^\vee}G^{\Gamma}$
is \emph{distinguished} if its restriction to ${^\vee}G$ is distinguished.
 Furthermore, any
automorphism in the above conjugacy class is compatible with 
$\vartheta^{\Gamma}$ in the following sense.
\begin{definition}
  \label{compdef}
Suppose $\vartheta^{\Gamma}$ is an automorphism of $G^{\Gamma}$ and
${^\vee}\vartheta^{\Gamma}$ is an automorphism of ${^\vee}G^{\Gamma}$,
a weak E-group for $G$.  Then $\vartheta^{\Gamma}$ and
${^\vee}\vartheta^{\Gamma}$ are \emph{compatible} if the dual action of
$\Psi_{0}(\vartheta^{\Gamma}_{|G})$ on the root datum
$\Psi_{0}({^\vee}G)$ is equal to 
that of $\Psi_{0}({^\vee}\vartheta_{|G}^{\Gamma})$.
\end{definition}

Let us now reconsider the constructions of Example \ref{autex} more
generally.  Let $\vartheta^{\Gamma}$ be an automorphism of a weak
extended group $G^{\Gamma}$ and $\vartheta =
\vartheta^{\Gamma}_{|G}$.  Then there is an $\mathrm{Int}
({^\vee}G)$-conjugacy class of automorphisms of ${^\vee}G$ which correspond
to $\vartheta$.   Any automorphism in this conjugacy class commutes
with the first invariant of ${^\vee}G^{\Gamma}$ as in equation
(\ref{abequ}).  However, Proposition \ref{sigmares1} tells us that
such an automorphism extends to automorphism of ${^\vee}G^{\Gamma}$ if
and only if it also preserves the second invariant $z$.  Suppose that
one of the automorphisms in the conjugacy class preserves $z$.  In
this case  \emph{any} automorphism 
in the conjugacy class extends to an automorphism 
 of ${^\vee}G^{\Gamma}$ by Proposition \ref{sigmares1}.
Indeed, composition by inner automorphisms has no effect on
the canonical based root datum, and has no effect on evaluation at the central
element $z$.  The distinguished automorphisms of ${^\vee}G^{\Gamma}$ which are
compatible with $\vartheta^{\Gamma}$ are all obtained in
this manner.

The order of distinguished automorphisms shall be important for
endoscopy.  Any distinguished automorphism of ${^\vee}G$ is of finite
order modulo the centre (16.5
\cite{humphreys}).  As the following example shows, a distinguished
automorphism of  ${^\vee}G^{\Gamma}$ need not be of finite order even
if its restriction to ${^\vee}G$ is.
\begin{exmp}
  Let ${^\vee}G = \mathrm{GL}_{1}$, and
  ${^\vee}G^{\Gamma} = {^\vee}G \rtimes \Gamma$ with non-trivial
  $\gamma \in \Gamma$ acting on ${^\vee}G$ by inversion.  In this case
  all automorphisms are vacuously distinguished.  Moreover,  $\ker(1
  + \theta_{Z}) = Z({^\vee}G) = {^\vee}G$, and using  Proposition
  \ref{sigmares1} we may extend the trivial automorphism on ${^\vee}G$
  to an automorphism ${^\vee}\vartheta^{\Gamma}$ on
  ${^\vee}G^{\Gamma}$ by choosing $z_{1} \in {^\vee}G$ and defining
  ${^\vee}\vartheta^{\Gamma}({^\vee}\delta) = z_{1} {^\vee}\delta$ for
  any ${^\vee}\delta \in {^\vee}G^{\Gamma} - {^\vee}G$.  It is simple
  to verify that ${^\vee}\vartheta^{\Gamma}$ is of finite order if and
  only if $z_{1}$ is of finite order.
\end{exmp}
\begin{prop}
  \label{finord}
  Suppose ${^\vee}G$ is semisimple and ${^\vee}\vartheta^{\Gamma}$ is
  a distinguished automorphism 
  of a weak E-group ${^\vee}G^{\Gamma}$ whose restriction
  ${^\vee}\vartheta$ to ${^\vee}G$ is of order $m$.  Then
  ${^\vee}\vartheta^{\Gamma} ({^\vee}\delta) = z_{1} {^\vee}\delta$ for some
  ${^\vee}\delta \in {^\vee}G^{\Gamma} - {^\vee}G$ and $z_{1} \in
  Z({^\vee}G)$.  Furthermore 
  ${^\vee}\vartheta^{\Gamma}$ is of finite order if and only if 
$z_{2} = (1 + {^\vee}\vartheta + \cdots +
  {^\vee}\vartheta^{m-1})(z_{1})$ is of finite order.
\end{prop}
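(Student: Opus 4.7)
The plan is to pin down $z_{1}$ inside $Z({^\vee}G)$ by a judicious choice of ${^\vee}\delta$, and then to compute $({^\vee}\vartheta^{\Gamma})^{k}({^\vee}\delta)$ directly in order to read off the order condition.

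\textbf{First assertion.} Fix a splitting $\mathcal{S}$ of ${^\vee}G$ preserved by the distinguished automorphism ${^\vee}\vartheta$. Since the inner-automorphism class on ${^\vee}G$ of any element of ${^\vee}G^{\Gamma} - {^\vee}G$ realizes the first invariant $a \in \mathrm{Aut}\Psi_{0}({^\vee}G)$, I may left-multiply an initial choice by a suitable element of ${^\vee}G$ and so arrange that $\mathrm{Int}({^\vee}\delta)_{|{^\vee}G}$ is itself the distinguished representative of $a$ preserving $\mathcal{S}$; this uses the conjugacy of distinguished automorphisms for different splittings noted after Definition \ref{sigmares}. Because ${^\vee}\vartheta^{\Gamma}$ stabilizes the coset ${^\vee}G^{\Gamma} - {^\vee}G$, there exists $g \in {^\vee}G$ with ${^\vee}\vartheta^{\Gamma}({^\vee}\delta) = g \cdot {^\vee}\delta$. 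Comparing inner automorphisms on ${^\vee}G$ yields
$$\mathrm{Int}(g) \circ \mathrm{Int}({^\vee}\delta)_{|{^\vee}G} = \mathrm{Int}(g \cdot {^\vee}\delta)_{|{^\vee}G} = {^\vee}\vartheta \circ \mathrm{Int}({^\vee}\delta)_{|{^\vee}G} \circ {^\vee}\vartheta^{-1}.$$
The right-hand side preserves $\mathcal{S}$, as both ${^\vee}\vartheta$ and $\mathrm{Int}({^\vee}\delta)_{|{^\vee}G}$ do, so $\mathrm{Int}(g)$ preserves $\mathcal{S}$ and therefore $g \in Z({^\vee}G)$. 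Set $z_{1} = g$.

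\textbf{Second assertion.} A simple induction on $k$, using ${^\vee}\vartheta^{\Gamma}_{|{^\vee}G} = {^\vee}\vartheta$ and the commutativity of $Z({^\vee}G)$, yields
$$({^\vee}\vartheta^{\Gamma})^{k}({^\vee}\delta) = \left(\prod_{i=0}^{k-1} {^\vee}\vartheta^{i}(z_{1})\right) \cdot {^\vee}\delta.$$
Taking $k = m$ gives $z_{2} \cdot {^\vee}\delta$ on the right, and since ${^\vee}\vartheta^{m} = \mathrm{id}$, iterating the same formula shows $({^\vee}\vartheta^{\Gamma})^{jm}({^\vee}\delta) = z_{2}^{j} \cdot {^\vee}\delta$ for every $j \geq 1$. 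If $({^\vee}\vartheta^{\Gamma})^{n} = \mathrm{id}$ then restriction to ${^\vee}G$ forces $m \mid n$, and writing $n = jm$ gives $z_{2}^{j} = 1$. Conversely, if $z_{2}$ has finite order $j$, then $({^\vee}\vartheta^{\Gamma})^{jm}$ is trivial on both ${^\vee}G$ and on ${^\vee}\delta$, hence on all of ${^\vee}G^{\Gamma}$.

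The only real technical point is the initial reduction in the first assertion, namely arranging $\mathrm{Int}({^\vee}\delta)_{|{^\vee}G}$ to share a splitting with ${^\vee}\vartheta$. After that the proof reduces to a direct computation inside the abelian group $Z({^\vee}G)$, which is finite by the semisimplicity hypothesis. In particular $z_{2}$ is automatically of finite order, so the proposition amounts to saying that ${^\vee}\vartheta^{\Gamma}$ itself is always of finite order when ${^\vee}G$ is semisimple, in contrast to the preceding torus example.
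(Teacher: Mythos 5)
Your proof is correct and follows essentially the same route as the paper's: choose ${^\vee}\delta$ so that $\mathrm{Int}({^\vee}\delta)_{|{^\vee}G}$ shares a pinning with ${^\vee}\vartheta$ (the paper cites Proposition 2.8 of \cite{abv} for this), deduce $z_{1} \in Z({^\vee}G)$ because an inner automorphism fixing a pinning is trivial, and iterate to reduce the question of the order of ${^\vee}\vartheta^{\Gamma}$ to that of $z_{2}$. Your closing remark that $z_{2}$ is automatically of finite order when ${^\vee}G$ is semisimple, so that the conditional is vacuous and the proposition really asserts that such a ${^\vee}\vartheta^{\Gamma}$ always has finite order, is a correct observation that the paper leaves implicit.
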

\begin{proof}
There exists an element ${^\vee}\delta \in {^\vee}G^{\Gamma} -
{^\vee}G$ which preserves the same splitting ${^\vee}\vartheta$ does
(Proposition 2.8 \cite{abv}).  It follows in turn that
${^\vee}\vartheta^{\Gamma}( {^\vee}\delta)$ preserves this splitting
and that ${^\vee}\vartheta^{\Gamma}( {^\vee}\delta) = z_{1}
{^\vee}\delta$ for some $z_{1} \in Z({^\vee}G)$.

It is easily seen that
$({^\vee}\vartheta^{\Gamma})^{\ell}(g \,{^\vee}\delta) = g\,
{^\vee}\delta$ for all $g \in {^\vee}G$ only if $m$ divides $\ell$.
The second assertion now follows from
$({^\vee}\vartheta^{\Gamma})^{mk}(g \,{^\vee}\delta) = gz_{2}^{k}\,
{^\vee}\delta$. 
\end{proof}

\subsection{Actions on $\Pi(G/\mathbb{R})$ and $\Xi({^\vee}G^{\Gamma})$}
\label{actions1}

Suppose $(G^{\Gamma}, \mathcal{W})$ is an extended group for $G$,
$\vartheta^{\Gamma}$ is an automorphism thereof and $\vartheta =
\vartheta^{\Gamma}_{|G}$. 
Suppose further that
$({^\vee}G^{\Gamma}, \mathcal{D})$ is an E-group for $G$ with second
invariant  $z \in Z({^\vee}G)^{\theta_{Z}}$, and that one (and hence
all) of the dual 
automorphisms ${^\vee}\vartheta$ in the  conjugacy class  discussed 
at the end of the previous section preserves the${^\vee}G$-conjugacy class
$\mathcal{D} \subset {^\vee}G^{\Gamma}$.
Then ${^\vee}\vartheta$ fixes $z$, and by Proposition  \ref{sigmares1} 
we may extend ${^\vee}\vartheta$ to an automorphism 
$^{\vee}\vartheta^{\Gamma}$ of $({^\vee}G^{\Gamma},\mathcal{D})$.  By
construction, the automorphisms $\vartheta^{\Gamma}$ and
${^\vee}\vartheta^{\Gamma}$ are compatible. 

The automorphism $\vartheta^{\Gamma}$ has a straightforward action upon a 
representation of a strong real form $(\pi, \delta)$, namely
\begin{equation}
\label{repact}
\vartheta^{\Gamma} \cdot (\pi, \delta) = (\pi \circ \vartheta, 
(\vartheta^{\Gamma})^{-1}(\delta)).
\end{equation}
This action passes to the set of equivalence classes $\Pi(G/\mathbb{R})$.  
We say that $(\pi, \delta)$ is $\vartheta^{\Gamma}$\emph{-stable} if it is
equivalent to $\vartheta^{\Gamma} \cdot (\pi, \delta)$ (Definition 2.13
\cite{abv}). 

 There is also a straightforward
action of the automorphism ${^\vee}\vartheta^{\Gamma}$ 
on a complete geometric parameter.  
To define this action clearly, we fix a representative $((y, \Lambda),\tau)$
for a complete geometric parameter (Section \ref{recall}), where  
$(y, \Lambda) \in  X({^\vee}G^{\Gamma})$.  
We define ${^\vee}\vartheta^{\Gamma} \cdot (y, \Lambda) = 
({^\vee}\vartheta^{\Gamma}(y), d\,{^\vee}\vartheta(\Lambda))$, where
$d\,{^\vee}\vartheta$ 
is the differential of ${^\vee}\vartheta$. It is easily verified that
${^\vee}\vartheta^{\Gamma} \cdot (y, \Lambda) \in X({^\vee}G^{\Gamma})$.   

We now turn to  $\tau$, which is a character of the component group of the
isotropy group  of $(y, \Lambda)$ in ${^\vee}G^{alg}$.  By relating
$\mathrm{Aut}({^\vee}G)$ to $\mathrm{Aut}(\Psi_{0}({^\vee}G))$
(Proposition 2.11 \cite{abv}) one may show that ${^\vee}\vartheta$ has
a unique lift to any finite algebraic cover of ${^\vee}G$.  The
collection of these lifts translates into a  lift of
${^\vee}\vartheta$ to an automorphism of ${^\vee}G^{alg}$.  We
abusively also denote this lift by ${^\vee}\vartheta$.
The image under ${^\vee}\vartheta$ of the isotropy group of
$(y,\Lambda)$ is equal to  the isotropy group of 
${^\vee}\vartheta^{\Gamma} \cdot (y, \Lambda)$.  This leads to the
definition of an action
\begin{equation}
\label{Xact}
{^\vee}\vartheta^{\Gamma} \cdot ((y,\Lambda), \tau) =
({^\vee}\vartheta^{\Gamma} \cdot (y,\Lambda),\ \tau \circ
{^\vee}\vartheta^{-1})
\end{equation}
in which the notation $\tau \circ {^\vee}\vartheta^{-1}$ is slightly
abusive because of the identifications we are making for ${^\vee}\vartheta$.  
One may verify that this action passes to equivalence classes.  It is
a simple exercise to prove that this action on equivalence classes is
insensitive 
to the choice of automorphism in $\mathrm{Int}({^\vee}G)\cdot {^\vee}\vartheta$
and to the choice of extension ${^\vee}\vartheta^{\Gamma}$.  In other words
the action on $\Xi(G/\mathbb{R})$ derived from $\vartheta^{\Gamma}$ is 
canonical.

Finally, let us return to the action of $\vartheta^{\Gamma}$ on
$\Pi(G/\mathbb{R})$.   For the complete picture, we must
consider how $\vartheta^{\Gamma}$ acts on canonical projective
representations of  strong real forms, \emph{i.e.} on $\Pi^{z}(G/\mathbb{R})$
for non-trivial $z \in Z({^\vee}G)^{\theta_{Z}}$ (Section
\ref{recall}).  In order to extend (\ref{repact}) to this setting, one
must lift $\vartheta$ to an automorphism of the canonical covering
$$1 \rightarrow \pi_{1}(G)^{can} \rightarrow G^{can} \rightarrow G
\rightarrow 1.$$
This lifting is achieved in  the same way as the lifting of
${^\vee}\vartheta$ to ${^\vee}G^{alg}$ above, except that one
restricts to \emph{distinguished} covers of $G$ (Definition 10.1
\cite{abv}) instead of finite algebraic covers.  We identify
$\vartheta$ with its lift to $G^{can}$ in (\ref{repact}) to define the
action of $\vartheta^{\Gamma}$ on $\Pi^{z}(G/\mathbb{R})$.

\section{Twisting and the local Langlands Correspondence}
\label{twistllc}

We continue with the hypotheses of Section \ref{actions1}, so that we
have compatible  automorphisms $\vartheta^{\Gamma}$ of $(G^{\Gamma},
\mathcal{W})$ and ${^\vee}\vartheta^{\Gamma}$ of (${^\vee}G^{\Gamma},
\mathcal{D})$.  Additionally, we have actions of $\vartheta^{\Gamma}$
on $\Pi^{z}(G/\mathbb{R})$, and of ${^\vee}\vartheta^{\Gamma}$ on
$\Xi({^\vee}G^{\Gamma})$.  Our goal is to prove that the local
Langlands Correspondence (\ref{llc3}) is
$\vartheta^{\Gamma}$-equivariant; \emph{i.e.}  that 
if $(\pi, \delta)$ corresponds to $((y,\Lambda), \tau)$ then
$\vartheta^{\Gamma} \cdot(\pi, \delta)$ corresponds to 
${^\vee}\vartheta^{\Gamma} \cdot ((y,\Lambda), \tau)$.

Our proof of this theorem proceeds in two steps.  In the first step
we prove the theorem when $G$ is an algebraic torus.  In this case it
is convenient to begin with the special case that the second invariant
$z \in Z(^{\vee}G)^{\theta_{Z}}$ is trivial, and  $Q =
\pi_{1}({^\vee}G)^{alg}$, before giving the proof in general.  

In the second step we make no assumptions on $G$ and use the first
step together with a classification of $\Pi^{z}(G/\mathbb{R})$ in
terms of tori (Chapter 11 \cite{abv}).

\subsection{The $\vartheta^{\Gamma}$-equivariance of the local Langlands
  Correspondence for tori}

In this section we assume $G = T$ is an algebraic torus.
The  local
Langlands Correspondence (\ref{llc2}) for tori is essentially proved
in Theorem 5.11 \cite{av92} (\emph{cf.} Proposition 10.6 and Corollary
10.7 \cite{abv}). We shall describe elements of this proof and apply
our automorphisms to them in order to arrive at the desired
equivariance. For the sake of simplicity  we begin under the
assumption that the second invariant
$z$ is trivial and $Q = \pi_{1}({^\vee}T)^{alg}$.  

We suppose that $(\pi,
\delta) \in \Pi(T/\mathbb{R})$  
corresponds to $((y,\Lambda), \tau) \in \Xi(T/\mathbb{R})$ under
(\ref{llc2}) and denote this correspondence by 
\begin{equation}
\label{llctori}
(\pi, \delta)  \leftrightarrow ((y,\Lambda),
\tau).
\end{equation}
As $T$ is abelian, there is only one inner form  of
$T$ in the inner class determined by $T^{\Gamma}$.  Thus, all
representations $\pi$ as above are representations of the real points
$T(\mathbb{R})$ of this single
inner form.  This allows us to divide correspondence (\ref{llctori})
into two separate correspondences
\begin{equation}
\label{llctori1}
\pi \leftrightarrow (y, \Lambda) \mbox{ and } \delta \leftrightarrow
\tau.
\end{equation}
We shall prove the $\vartheta^{\Gamma}$-equivariance of each of these
two correspondences in turn.

Observing again that  $T$ is abelian, we see that the canonical flat
$\Lambda$ is  just a singleton containing the differential 
$d\pi \in  \mathfrak{t}^{*} \cong {^\vee}\mathfrak{t}$ 
((9.3) and Proposition 10.6 \cite{abv}). Let $-a$ be the inverse map
composed with the 
automorphism of $X^{*}(T) \cong X_{*}({^\vee}T)$ obtained from
the first invariant of  $T^{\Gamma}$  (or ${^\vee}T^{\Gamma}$). 
The pair $(y,d \pi)$ corresponds to a unique element $\nu \in
X^{*}(T)/(1-a)X^{*}(T)$  
((4.7) \cite{av92}) whose values on the maximal compact subgroup of 
$T(\mathbb{R}) = T(\mathbb{R}, \delta)$ equal those of $\pi$.
Furthermore, the pair $(\nu, d\pi)$ defines a quasicharacter of
$T(\mathbb{R})$ equal to $\pi$ (Proposition 3.26 \cite{av92}).
In brief, we have correspondences
\begin{equation}
\label{3.26prop}
\pi \leftrightarrow  (\nu, d\pi) \leftrightarrow (y, d\pi)
\end{equation}
From this it is simple to   show that
$$\pi \circ \vartheta \leftrightarrow ( \nu \circ \vartheta , d\pi \circ 
d \vartheta)$$
when $\nu$ is regarded as an element of $X^{*}(T)$ and $d \pi$ is regarded as 
an element of $\mathfrak{t}^{*}$.  If $\nu$ is identified with an element
in $X_{*}({^\vee}T) \cong X^{*}(T)$, then $\nu \circ \vartheta$ is replaced with 
${^\vee}\vartheta \circ \nu$.  Similarly if $d\pi$ is
identified with an element in ${^\vee}\mathfrak{t} \cong \mathfrak{t}^{*}$, then
$d\pi \circ d\vartheta$ is replaced with $d{^\vee}\vartheta(d\pi)$ ((9.2)-(9.3)
\cite{abv}).  With these identifications in place, the correspondences
of (\ref{3.26prop}) prescribe that
\begin{equation}
\label{1corr}
\pi \circ \vartheta \leftrightarrow 
({^\vee}\vartheta \circ \nu, d {^\vee}\vartheta (d\pi)) 
\leftrightarrow {^\vee}\vartheta^{\Gamma}\cdot
(y,  d\pi) = {^\vee}\vartheta^{\Gamma}\cdot (y,\Lambda).
\end{equation}
The middle correspondence here requires a computation which
attaches a Langlands parameter to ${^\vee}\vartheta^{\Gamma}\cdot
(y,  d\pi)$ ((5.7) \cite{abv}), and follows the action of
${^\vee}\vartheta^{\Gamma}$ through  (4.6)(a)-(4.6)(b) and  (4.7)(c)
\cite{av92}.  The key  computation is in (4.7)(c)
\cite{av92}. It is of the form
\begin{align*}
& \frac{1}{2}(d {^\vee}\vartheta (d\pi) + a \circ d {^\vee}\vartheta
(d\pi)) -  (d {^\vee}\vartheta (X) -a \circ d {^\vee}\vartheta (X))\\
&= d {^\vee}\vartheta \left( \frac{1}{2}( d\pi + a
(d\pi)) -  ( X -a ( X))
\right)\\
& \leftrightarrow {^\vee}\vartheta \circ \nu
\end{align*}
for some $X \in {^\vee}\mathfrak{t}$.  It uses the commutativity of
$a$ and $d {^\vee}\vartheta$, which itself ensues from ${^\vee}\vartheta^{\Gamma}$
preserving $\mathcal{D}$.  
This completes our proof of the $\vartheta^{\Gamma}$-equivariance of the first
correspondence in (\ref{llctori1}). 

For the second correspondence in (\ref{llctori1})  set $x = (y, \Lambda)$
as above.  We shall use the
following three bijections with labelling from \cite{abv}, 
\begin{align*}
\{\mbox{equivalence classes of strong real forms}\} & 
\stackrel{(9.10) (b)}{\longleftrightarrow} T^{-\sigma, fin}/ T_{0}^{-\sigma}\\
&\stackrel{(9.10)(e)}{\cong} (X_{*}(T) \otimes_{\mathbb{Z}} \mathbb{Q})^{-a}/ 
(1-a)X_{*}(T)\\
& \stackrel{(9.8)(c)}{\cong} \hat{A}_{x}^{loc,alg}.
\end{align*}
We first describe
each of the three bijections, and then trace the action of $\vartheta^{\Gamma}$ 
through each
of them to obtain the desired correspondence between $(\vartheta^{\Gamma})^{-1}
(\delta)$
and $\tau \circ {^\vee}\vartheta^{-1}$.  

In the first bijection $\sigma = \sigma_{Z}$ is the antiholomorphic
involution of $T$ defined by $\mathrm{Int}(\delta)$.  The subgroup
$T^{-\sigma,fin}$ consists of elements $t \in T$ such that
$t\sigma(t)$ has finite order, and  $T_{0}^{-\sigma}$ is the identity
component of the subgroup consisting of elements such that
$t\sigma(t) = 1$.  Let $\delta_{0}$ be a strong
real form occurring in $\mathcal{W}$.  Then $\delta = t \delta_{0}$ for
some element $t \in T^{-\sigma,
  fin}$. The strong real form $\delta$ corresponds to the coset
$tT_{0}^{-\sigma}$ in the first bijection.  

 The
second bijection is induced 
from the map which sends 
$$\mu \in (X_{*}(T) \otimes_{\mathbb{Z}} \mathbb{Q})^{-a}
\cong (\mathfrak{t} \otimes \mathbb{Q})^{-a}$$
to $\exp(\pi i \mu) \in T^{-\sigma, fin}$ (Lemma 9.9
\cite{abv}). Here, the map $-a$ is the inverse map composed with the action of
the first invariant on $X_{*}(T)$.  Suppose
that $t = \exp( \pi i \mu)$ for such  a $\mu$ (with apologies for
the abusive notation for $\pi$ here).  

The third bijection is a  consequence of $X_{*}(T) \cong X^{*}({^\vee}T)$ and
its extension
$X^{*}({^\vee}T^{\mathrm{alg}}) \cong X_{*}(T) \otimes_{\mathbb{Z}} \mathbb{Q}$ ((9.7)
\cite{abv}).  
By the definition of the second correspondence in (\ref{llctori}), this
bijection sends $\mu$ to $\tau$.

Let us follow the action of $\vartheta^{\Gamma}$  through these three
bijections.  Since $\vartheta^{\Gamma}$ preserves the conjugacy class
$\mathcal{D}$ of $\delta_{0}$, we compute
$$(\vartheta^{\Gamma})^{-1}(\delta) = \vartheta^{-1}(t)
(\vartheta^{\Gamma})^{-1}(\delta_{0}) = \vartheta^{-1}(t) s \delta_{0}
s^{-1} = \vartheta^{-1}(t) s\sigma(s^{-1})\, \delta_{0}$$
for some $s \in T$.  The element $s \sigma(s^{-1})$ belongs to
$T_{0}^{-\sigma}$ (Proposition 9.10 \cite{abv}),  so
$(\vartheta^{\Gamma})^{-1}(\delta)$ 
corresponds to the element $\vartheta^{-1}(t)$ under the first
bijection. 
The latter element corresponds to $d\vartheta^{-1}(\mu)$ under the second
bijection.  The third bijection sends $d \vartheta^{-1} (\mu)$ to
$\tau \circ {^\vee}\vartheta^{-1}$.  In summary, the second
correspondence in (\ref{llctori1}) is $\vartheta^{\Gamma}$-equivariant, and
for tori we have the
desired correspondence
\begin{equation}
\label{descorr}
\vartheta^{\Gamma} \cdot (\pi, \delta) = 
(\pi\circ \vartheta, (\vartheta^{\Gamma})^{-1}
(\delta)) \leftrightarrow ({^\vee}\vartheta^{\Gamma} \cdot
(y,\Lambda), \tau \circ {^\vee} 
\vartheta^{-1}) = {^\vee}\vartheta^{\Gamma} \cdot ((y,\Lambda), \tau).
\end{equation}

We now discuss the proof of (\ref{llc2}) for tori allowing for the
possibility of non-trivial $z \in
Z({^\vee}T)^{\theta_{Z}} = {^\vee}T^{\theta_{Z}}$.  This does not
affect the second correspondence in (\ref{llctori1}) at all.
However, the first correspondence is affected and we must consider the
subspace
$${^\vee}\mathfrak{t}_{\mathbb{Q}} = 
X_{*}({^\vee}T) \otimes_{\mathbb{Z}} \mathbb{Q} \subset
X_{*}({^\vee}T) \otimes_{\mathbb{Z}} \mathbb{C} =
{^\vee}\mathfrak{t}.$$
 This subspace may be identified with a subspace
of $\mathfrak{t}^{*}$ using $X^{*}(T) \cong X_{*}({^\vee}T)$.  Suppose
$\lambda \in {^\vee}\mathfrak{t}_{\mathbb{Q}}$ satisfies $\exp(2 \pi i
\lambda) = z$.  According to Proposition 10.6 \cite{abv} and Theorem
5.11 \cite{av92} the correspondences of (\ref{3.26prop}) extend to 
the present setting if we allow $\nu$ to belong to  $\lambda +
X^{*}(T)$ (modulo $(1-a)X^{*}(T)$).  Having made this adjustment, one
may follow the action of $\vartheta^{\Gamma}$ through (\ref{3.26prop})
as before to obtain (\ref{1corr}) and the desired correspondence
(\ref{descorr}).

Finally, let us consider correspondence
(\ref{llc3}) for $\Xi^{z}({^\vee}T^{\Gamma})^{Q}$.   Recall that $Q$ is a
quotient of the algebraic fundamental group as in (\ref{qq}). Any element of
$\Xi^{z}({^\vee}T^{\Gamma})^{Q}$ may be equally regarded as an element  
$(x,\tau)  \in \Xi^{z}({^\vee}T^{\Gamma})$  in  which the restriction
 of $\tau$ to 
$$K_{Q} ({^\vee}G_{x}^{alg})_{0}/({^\vee}G_{x}^{alg})_{0} \subset
 A_{x}^{loc, alg}$$
is trivial.  The  automorphism ${^\vee}\vartheta^{\Gamma}$ acts on
$\Xi^{z}({^\vee}T^{\Gamma})^{Q}$  if and only if
${^\vee}\vartheta$ preserves this subgroup for all $(x,\tau)$
(via the lift of ${^\vee}\vartheta$).  In order to ensure this
property we assume that (the lift of) ${^\vee}\vartheta$ preserves
$K_{Q}$.  Equivalently, we assume that (the lift of)
${^\vee}\vartheta$ preserves $Q$ or $\hat{Q}$.  Under this assumption,
the $\vartheta^{\Gamma}$-equivariance of (\ref{descorr})
for 
$\Pi^{z}(T/\mathbb{R}) \stackrel{LLC}{\longleftrightarrow}\Xi^{z}({^\vee}T^{\Gamma})$
 restricts to 
$$\Pi^{z}(T/\mathbb{R})_{\hat{Q}} \stackrel{LLC}{\longleftrightarrow} 
\Xi^{z}({^\vee}T^{\Gamma})^{Q}.$$
The simplicity of this statement is slightly misleading in that we are
identifying the finite subgroup $J \subset T$, described at the end of
Section \ref{recall}, with $\hat{Q}$.  This identification is made
through the isomorphisms of (9.7)(d) and Lemma 9.9 (b) \cite{abv}.
It is straightforward to verify that these isomorphisms are
$\vartheta^{\Gamma}$-invariant in an appropriate sense.
We close by restating the above result as a theorem.
\begin{thm}
  \label{equivtori}
Suppose $T$ is an algebraic torus, $(T^{\Gamma}, \mathcal{W})$ is an
extended group for $T$, and $({^\vee}T^{\Gamma},\mathcal{D})$ is an E-group for $T^{\Gamma}$
with second invariant $z$. Suppose further that $\vartheta^{\Gamma}$
and ${^\vee}\vartheta^{\Gamma}$ are compatible automorphisms of
$(T^{\Gamma}, \mathcal{W})$ and  $({^\vee}T,\mathcal{D})$
respectively, and that  $Q$ is a ${^\vee}\vartheta$-stable  quotient
of $\pi_{1}({^\vee}T)^{alg}$ by a closed subgroup.  Then the bijection 
$$\Pi^{z}(T/\mathbb{R})_{\hat{Q}} \stackrel{LLC}{\longleftrightarrow} 
\Xi^{z}({^\vee}T^{\Gamma})^{Q}$$
of Theorem 10.11 \cite{abv} is $\vartheta^{\Gamma}$-equivariant in the
sense of (\ref{descorr}).
\end{thm}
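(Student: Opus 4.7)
The plan is to reduce the equivariance statement to two simpler correspondences and then to assemble them, following the classification of $\Pi^{z}(T/\mathbb{R})$ for tori in Chapter 11 of \cite{abv}. Since $T$ is abelian, there is only one inner form in the inner class, so every $(\pi,\delta)$ has $\pi$ a quasicharacter of $T(\mathbb{R})$. This lets one split the bijection $(\pi,\delta) \leftrightarrow ((y,\Lambda),\tau)$ into the two independent correspondences
\begin{equation*}
\pi \leftrightarrow (y,\Lambda) \qquad \text{and} \qquad \delta \leftrightarrow \tau,
\end{equation*}
and prove $\vartheta^{\Gamma}$-equivariance of each separately. I would begin with the simplest case $z = 1$ and $Q = \pi_{1}({^\vee}T)^{\mathrm{alg}}$ and then remove these hypotheses in turn at the end.

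For the first correspondence I would use the intermediate description from Proposition 3.26 and (4.7) of \cite{av92}: the pair $(y,d\pi)$ is encoded by a class $\nu \in X^{*}(T)/(1-a)X^{*}(T)$ together with the differential $d\pi \in \mathfrak{t}^{*}$, and the pair $(\nu,d\pi)$ recovers the quasicharacter $\pi$. Pulling back $\pi$ by $\vartheta$ replaces $(\nu, d\pi)$ by $(\nu\circ\vartheta, d\pi \circ d\vartheta)$; after identifying $X^{*}(T) \cong X_{*}({^\vee}T)$ and $\mathfrak{t}^{*} \cong {^\vee}\mathfrak{t}$, these become $({^\vee}\vartheta\circ \nu, d{^\vee}\vartheta(d\pi))$. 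The only real content is verifying that this transported pair corresponds to ${^\vee}\vartheta^{\Gamma}\cdot(y,\Lambda)$, which reduces to the explicit formula at (4.7)(c) of \cite{av92} for recovering the Langlands parameter from $(y,d\pi)$: applying $d{^\vee}\vartheta$ to the formula and using that ${^\vee}\vartheta$ commutes with $a$ (since ${^\vee}\vartheta^{\Gamma}$ preserves $\mathcal{D}$, via \eqref{abequ}) gives the claim.

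For the second correspondence I would follow the three bijections
\begin{equation*}
\{\text{strong real forms}\}/{\sim}\;\longleftrightarrow\; T^{-\sigma,\mathrm{fin}}/T_{0}^{-\sigma} \;\cong\; (X_{*}(T)\otimes\mathbb{Q})^{-a}/(1-a)X_{*}(T) \;\cong\; \hat{A}^{\mathrm{loc},\mathrm{alg}}_{x}
\end{equation*}
from (9.10)(b), (9.10)(e), (9.8)(c) of \cite{abv}, and check each one is $\vartheta^{\Gamma}$-equivariant. Writing $\delta = t\delta_{0}$ with $\delta_{0}$ occurring in $\mathcal{W}$, the compatibility of $\vartheta^{\Gamma}$ with $\mathcal{W}$ (and thus the preservation up to $T$-conjugacy of $\delta_{0}$) gives $(\vartheta^{\Gamma})^{-1}(\delta) = \vartheta^{-1}(t)\, s\sigma(s^{-1})\delta_{0}$ for some $s \in T$, so the first bijection sends $(\vartheta^{\Gamma})^{-1}(\delta)$ to the class of $\vartheta^{-1}(t)$. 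Writing $t = \exp(\pi i \mu)$, the second bijection transports this to $d\vartheta^{-1}(\mu)$, and the third, via $X^{*}({^\vee}T^{\mathrm{alg}}) \cong X_{*}(T)\otimes \mathbb{Q}$ of (9.7) \cite{abv}, sends it to $\tau\circ {^\vee}\vartheta^{-1}$. Combining the two correspondences yields \eqref{descorr}.

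It remains to remove the simplifying hypotheses. For non-trivial $z$, Proposition 10.6 \cite{abv} and Theorem 5.11 \cite{av92} say the first correspondence persists if $\nu$ is allowed to live in $\lambda + X^{*}(T)$ for any $\lambda \in {^\vee}\mathfrak{t}_{\mathbb{Q}}$ with $\exp(2\pi i \lambda) = z$; since ${^\vee}\vartheta^{\Gamma}$ fixes $z$, the previous argument goes through verbatim. Finally, for an arbitrary ${^\vee}\vartheta$-stable quotient $Q$, one reinterprets $\Xi^{z}({^\vee}T^{\Gamma})^{Q}$ as the subset of $(x,\tau) \in \Xi^{z}({^\vee}T^{\Gamma})$ whose $\tau$ is trivial on the image of $K_{Q}$; stability of $K_{Q}$ under ${^\vee}\vartheta$ means the ${^\vee}\vartheta^{\Gamma}$-action preserves this subset, and correspondingly the identification of $\hat Q$ with the finite subgroup $J \subset T$ via (9.7)(d) and Lemma 9.9(b) \cite{abv} is easily checked to be $\vartheta^{\Gamma}$-equivariant. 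I expect the main obstacle to be bookkeeping: keeping the many isomorphisms of lattices, Lie algebras, and cocycle spaces aligned so that the action induced by $\vartheta^{\Gamma}$ on one side matches the action induced by ${^\vee}\vartheta^{\Gamma}$ on the other at each step, especially within the $z \ne 1$ adjustment.
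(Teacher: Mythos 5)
Your proposal follows the paper's proof essentially step for step: the same decomposition into the two correspondences $\pi \leftrightarrow (y,\Lambda)$ and $\delta \leftrightarrow \tau$, the same use of Proposition 3.26 and (4.7)(c) of \cite{av92} with the commutativity of $a$ and $d\,{^\vee}\vartheta$ for the first, the same chain of bijections (9.10)(b), (9.10)(e), (9.8)(c) of \cite{abv} for the second, and the same two-stage removal of the simplifying hypotheses on $z$ and $Q$. This matches the paper's argument in both structure and supporting references.
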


\subsection{The $\vartheta^{\Gamma}$-equivariance of the local Langlands
  Correspondence for reductive groups}
\label{twistllc1}

We return to considering correspondence (\ref{llc3}) for arbitrary connected
reductive $G$.  As observed at the end of the previous section, the
$\vartheta^{\Gamma}$-equivariance of (\ref{llc3}) is an immediate
consequence of the $\vartheta^{\Gamma}$-equivariance of
correspondence (\ref{llc2}) under the assumption that the lift of
${^\vee}\vartheta$ preserves $Q$.  It suffices therefore to prove the
$\vartheta^{\Gamma}$-equivariance of (\ref{llc2}).  To this end,
suppose  $(G^{\Gamma}, \mathcal{W})$ is an
extended group for $G$, and $({^\vee}G,\mathcal{D})$ is an E-group for $G^{\Gamma}$
with second invariant $z$. We suppose further that $\vartheta^{\Gamma}$
and ${^\vee}\vartheta^{\Gamma}$ are compatible automorphisms of
$(G^{\Gamma}, \mathcal{W})$ and  $({^\vee}G,\mathcal{D})$
respectively.

Correspondence  (\ref{llc2})  is a combination of three bijections of
equivalence classes
\begin{equation}
\label{abbrev}
(\pi, \delta) \stackrel{(12.3)}{\longleftrightarrow} (\delta, \Lambda^{can}, 
R_{i \mathbb{R}}^{+}, R_{\mathbb{R}}^{+}) \stackrel{(13.13),
  (10.7)}{\longleftrightarrow}
((y,\Lambda_{1}), \tau_{1})  \stackrel{(12.9)}{\longleftrightarrow}
((y,\Lambda),\tau) 
\end{equation}
where the labels above the arrows refer to the requisite results in \cite{abv}. 
Our strategy for proving the $\vartheta^{\Gamma}$-equivariance of
(\ref{abbrev}) is to describe each of its three bijections and follow the
action of $\vartheta^{\Gamma}$ or ${^\vee}\vartheta^{\Gamma}$ through
each of them. 

We begin with the description of the first correspondence in
(\ref{abbrev})
\begin{equation}
\label{langlandsclass}
(\pi, \delta) \longleftrightarrow (\delta, \Lambda^{can}, 
R_{i \mathbb{R}}^{+}, R_{\mathbb{R}}^{+}).
\end{equation}
On the left we have $\delta$, (an equivalence class of) a strong real form,
 and $\pi$, an (equivalence class of an) irreducible 
representation of $G(\mathbb{R}, \delta)^{can}$.  On the right we have
(an equivalence class of) a $G$\emph{-limit character} (Definition
12.1 \cite{abv}).  Its definition begins with a Cartan subgroup $T^{\Gamma}
\subset G^{\Gamma}$ such that $\delta \in T^{\Gamma} - T$.  In this
circumstance $T(\mathbb{R}) = T(\mathbb{R}, \delta) \subset
G(\mathbb{R}, \delta)$ is a Cartan subgroup in the usual sense, and
has a canonical cover
as in (\ref{cancov}).  There is an element $z(\rho) \in
Z({^\vee}G)^{\theta_{Z}}$ defined  from the half-sum of 
the positive roots of $(G,T)$, which is independent of the choice of
positive system (Definition 4.9 \cite{abv}).   The $G$-limit character
in (\ref{langlandsclass})  is a
triple in which  $\Lambda^{can} \in \Pi^{z z(\rho)}(T(\mathbb{R}))$
has  differential $d \Lambda^{can} \in \mathfrak{t}^{*}$, and 
$R_{i\mathbb{R}}^{+}$ and $R_{\mathbb{R}}^{+}$ are  systems of positive
  imaginary and real roots respectively, satisfying  $\langle \alpha,
  d \Lambda^{can} \rangle \geq 0$ for all $\alpha
  \in R_{i \mathbb{R}}^{*}$ ((11.2) \cite{abv}).

Correspondence (\ref{langlandsclass}) is a version of the Langlands
classification for real reductive groups (\cite{langclass}).  We will
first consider a 
special case of this classification in which $\pi$ has regular
infinitesimal character and $d \Lambda^{can}$ is regular (Theorem
11.7 \cite{abv}).  Note however, that we are
working with canonical covers, which are merely pro-algebraic and not
algebraic as in Langlands' original classification.  
In this case, correspondence (\ref{langlandsclass}) may be described
by first attaching to $\Lambda = (\Lambda^{can}, 
R_{i \mathbb{R}}^{+}, R_{\mathbb{R}}^{+})$ a \emph{standard
representation} $M(\Lambda) = \mathrm{Ind}(P \uparrow G(\mathbb{R},
\delta)^{can}) 
(X_{L})$.  This is an induced  representation in which the real parabolic
subgroup  $P \subset G(\mathbb{R}, \delta)^{can}$ is determined by
$R_{i \mathbb{R}}^{+}$ and 
$R_{\mathbb{R}}^{+}$, and $X_{L}$ is a relative discrete series
representation of the Levi subgroup $L \subset P$ determined by
$R^{+}_{i\mathbb{R}}$ (see page 108 \cite{av92}).
The standard representation $M(\Lambda)$ has a unique irreducible
quotient $\pi(\Lambda)$, its \emph{Langlands quotient} (page 122
\cite{abv}).   Correspondence (\ref{langlandsclass}) is prescribed by
$\pi = \pi(\Lambda)$.  

Let us introduce the action of $\vartheta^{\Gamma}$ into
(\ref{langlandsclass}).  If $\pi$ is 
replaced by $\pi \circ \vartheta$ then $\pi \circ \vartheta =
\pi(\Lambda) \circ \vartheta$ is a representation of
$\vartheta^{-1}(G(\mathbb{R},\delta))= G(\mathbb{R}, 
(\vartheta^{\Gamma})^{-1} (\delta))$.   We wish to show that
$\pi(\Lambda) \circ \vartheta =   
\pi(\vartheta^{\Gamma} \cdot \Lambda)$ for some appropriate definition
of $\vartheta^{\Gamma} \cdot \Lambda$. Looking back to (\ref{repact}),
the obvious definition is
$$\vartheta^{\Gamma}\cdot \Lambda = \vartheta^{\Gamma} \cdot (\Lambda^{can}, R_{i
  \mathbb{R}}^{+}, R_{\mathbb{R}}^{+}) = ( \Lambda^{can} \circ \vartheta, 
 R_{i\mathbb{R}}^{+} \circ d\vartheta, R_{\mathbb{R}}^{+} \circ d\vartheta),$$
where $d\vartheta$ is the differential of $\vartheta$, and $\vartheta$
is identified with its lift to canonical covers as necessary.

Let us consider $\Lambda^{can} \circ \vartheta$.  Since $\vartheta(z)
= \vartheta(\delta_{0}^{2}) = z$ for any strong real form $\delta_{0}$
in $\mathcal{W}$, and $\vartheta(z(\rho)) =
z(\rho)$ (Definition 4.9 \cite{abv}), it is a formality to verify that
$\Lambda^{can} \circ 
\vartheta \in \Pi^{z z(\rho)}(\vartheta^{-1}(T(\mathbb{R})))$ and
corresponds to $X_{L} \circ \vartheta$.  It is
equally formal to verify that the parabolic subgroup
$\vartheta^{-1}(P) \subset G(\mathbb{R}, 
(\vartheta^{\Gamma})^{-1} (\delta))$ is determined by the 
positive root systems $R_{i\mathbb{R}}^{+} \circ d\vartheta$ and
$R_{\mathbb{R}}^{+} \circ d\vartheta$.  We deduce that 
$\pi(\Lambda) \circ \vartheta$ is the Langlands quotient of
\begin{equation}
  \label{indid}
M(\Lambda) \circ \vartheta  \cong  \mathrm{Ind}\left( \vartheta^{-1}(P)
\uparrow G(\mathbb{R}, 
(\vartheta^{\Gamma})^{-1}(\delta) )^{can} \right)(X_{L} \circ \vartheta) =
M(\vartheta^{\Gamma} \cdot \Lambda)
\end{equation}
(\emph{cf.} proof of Proposition 3.1 \cite{mezoajm}).  In other words
$\pi(\Lambda) \circ \vartheta = \pi(\vartheta^{\Gamma} \cdot \Lambda)$
and we are justified in writing
$$\vartheta^{\Gamma}\cdot (\pi, \delta) = (\pi(\Lambda) \circ \vartheta,
(\vartheta^{\Gamma})^{-1} (\delta))  \longleftrightarrow
((\vartheta^{\Gamma})^{-1}(\delta), \vartheta^{\Gamma} \cdot \Lambda) = 
\vartheta^{\Gamma} \cdot (\delta, \Lambda^{can}, 
R_{i \mathbb{R}}^{+}, R_{\mathbb{R}}^{+})$$
for  correspondence (\ref{langlandsclass}) in the case of regular
infinitesimal character.

The case of singular infinitesimal character necessitates some technical
conditions on $\Lambda = (\Lambda^{can}, R_{i\mathbb{R}}^{+},
R_{\mathbb{R}}^{+})$, defined as \emph{final} in Definition 11.13
\cite{abv}.  Beyond this, the principal difference from the regular
case is that $M(\Lambda)$ is now defined as $\mathrm{Ind}(P \uparrow
G(\mathbb{R}, \delta)^{can}) (\mathsf{\Psi}_{d\Lambda^{can} +
  \mu}^{d \Lambda^{can}}X_{L})$.  Here, $\mathsf{\Psi}_{d\Lambda^{can} +
  \mu}^{d \Lambda^{can}}$ is a Zuckerman translation functor in which 
$\mu \in \mathfrak{t}^{*}$ is a
strictly dominant weight with respect to some positive system
containing $R^{+}_{i\mathbb{R}}$ and $R^{+}_{\mathbb{R}}$ (Definition
5.1 \cite{spehvogan}\footnote{The superscript and subscript for
  $\mathsf{\Psi}$ are
  reversed in this reference.}).  The representation $X_{L}$ is in the relative
discrete series for the Levi subgroup $L$ and has infinitesimal
character $d\Lambda^{can} + \mu$.  The representation
$\mathsf{\Psi}_{d\Lambda^{can} +   \mu}^{d \Lambda^{can}}X_{L}$
belongs to the relative \emph{limit} of discrete series for $L$.  It
has infinitesimal character $d\Lambda^{can}$ and its distribution
character has a simple description given in Lemma 5.5
\cite{spehvogan}.  The $\vartheta^{\Gamma}$-equivariance of
(\ref{langlandsclass}) for singular infinitesimal character (Theorem
11.14 \cite{abv}) proceeds as in the regular case, but hinges on the
identity
$$\left( \mathsf{\Psi}_{d\Lambda^{can} +
  \mu}^{d \Lambda^{can}}X_{L} \right) \circ \vartheta \cong
\mathsf{\Psi}_{(d\Lambda^{can} +   \mu) \circ d\vartheta}^{d \Lambda^{can} \circ d\vartheta}
\left(X_{L} \circ \vartheta\right)$$
required in the modified version of (\ref{indid}).  This identity is
an immediate consequence of a comparison of the distribution
characters of both representations.  

Having dealt with the first correspondence of (\ref{abbrev}), we move
to the second, namely
\begin{equation}
  \label{Tcorr}
(\delta, \Lambda^{can}, 
R_{i \mathbb{R}}^{+}, R_{\mathbb{R}}^{+}) \longleftrightarrow
((y,\Lambda_{1}), \tau_{1}).
\end{equation}
As previously discussed,  $\Lambda^{can}$ is a representation of a
cover of  the real points $T(\mathbb{R}, \delta)$  of a Cartan
subgroup $T^{\Gamma} \subset G^{\Gamma}$.  Together with the additional
data $(\delta, R_{i \mathbb{R}}^{+}, R_{\mathbb{R}}^{+})$,
$T^{\Gamma}$ becomes a \emph{based Cartan subgroup} of $(G^{\Gamma}, \mathcal{W})$
(Definition 13.5 \cite{abv}). By Proposition 13.10 (a) \cite{abv}, this
based Cartan subgroup is paired with  a unique based Cartan subgroup
of $({^\vee}G^{\Gamma}, 
\mathcal{D})$ (Definition 13.7 \cite{abv}) up to conjugation.  Let
${^d}T^{\Gamma} \subset {^\vee}G^{\Gamma}$ be a Cartan subgroup in the
latter based Cartan subgroup.  The pairing allows us to view
${^d}T^{\Gamma}$ as an E-group for the extended group $(T^{\Gamma},
T\cdot \delta)$ (Definition 13.9 \cite{abv}).  In this view,
correspondence (\ref{Tcorr}) becomes correspondence (\ref{llctori})
(Proposition 13.12 \cite{abv}).  By Theorem \ref{equivtori} we may
write
$$\vartheta^{\Gamma} \cdot (\delta, \Lambda^{can}, 
R_{i \mathbb{R}}^{+}, R_{\mathbb{R}}^{+}) \longleftrightarrow
{^\vee}\vartheta^{\Gamma} \cdot ((y,\Lambda_{1}), \tau_{1}).$$

We may now proceed to the final correspondence of (\ref{abbrev}) which
is
\begin{equation}
  \label{3corr}
((y,\Lambda_{1}), \tau_{1})  \longleftrightarrow
((y,\Lambda),\tau).
\end{equation}
The proof of Theorem 12.9 \cite{abv} defines the map from right to
left in (\ref{3corr}).   The map from left to right is considerably
simpler for $\Lambda_{1}$.  The canonical flat $\Lambda_{1}$ is an element in
${^\vee}\mathfrak{t}$ which maps to the canonical flat $\Lambda =
\mathcal{F}(\Lambda_{1})$ as in Section \ref{recall}.  It is an
immediate consequence of the definitions that 
$${^\vee}\vartheta^{\Gamma} \cdot (y, \Lambda_{1}) =  
({^\vee}\vartheta^{\Gamma}(y), d\,{^\vee}\vartheta(\Lambda_{1}))
\mapsto ({^\vee}\vartheta^{\Gamma}(y),
d\,{^\vee}\vartheta(\mathcal{F}(\Lambda_{1}))
={^\vee}\vartheta^{\Gamma} \cdot (y, \Lambda).$$ 

As for $\tau_{1}$,
it passes to a representation of a quotient of $A^{loc,alg}_{(y, \Lambda_{1})}$
which is isomorphic to $A^{loc, alg}_{(y,\Lambda)}$ ((12.11)(e),
Definition 12.4 and Proposition 13.12 (a) \cite{abv}).  In this way
$\tau_{1}$ maps to a representation $\tau$ of $A^{loc,
  alg}_{(y,\Lambda)}$.  Moreover, the above isomorphism is derived
from a natural inclusion of component groups ((12.11)(e) and Lemma
12.10 (c) \cite{abv}) which is clearly ${^\vee}
\vartheta^{\Gamma}$-equivariant.  In particular, the representation
$\tau_{1} \circ {^\vee} 
\vartheta^{-1}$ passes to a quotient which maps to
the representation $\tau \circ {^\vee}\vartheta^{-1}$.  This completes
the ${^\vee}\vartheta^{\Gamma}$-equivariance of (\ref{3corr}) and also completes
final step in proving the  $\vartheta^{\Gamma}$-equivariance of (\ref{abbrev}).
\begin{thm}
\label{equivthm}
Suppose $G$ is a connected reductive  algebraic group, $(G^{\Gamma},
\mathcal{W})$ is an 
extended group for $G$, and $({^\vee}G^{\Gamma},\mathcal{D})$ is an
E-group for $G^{\Gamma}$ 
with second invariant $z$. Suppose further that $\vartheta^{\Gamma}$
and ${^\vee}\vartheta^{\Gamma}$ are compatible automorphisms of
$(G^{\Gamma}, \mathcal{W})$ and  $({^\vee}G,\mathcal{D})$
respectively, and that  $Q$ is a ${^\vee}\vartheta$-stable  quotient
of $\pi_{1}({^\vee}G)^{alg}$ by a closed subgroup.  Then the bijection 
$$\Pi^{z}(G/\mathbb{R})_{\hat{Q}} \stackrel{LLC}{\longleftrightarrow} 
\Xi^{z}({^\vee}G^{\Gamma})^{Q}$$
of Theorem 10.11 \cite{abv} is $\vartheta^{\Gamma}$-equivariant.
\end{thm}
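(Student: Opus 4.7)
The plan is to reduce the statement to the case $Q = \pi_{1}({^\vee}G)^{alg}$, and then to decompose the bijection \eqref{llc2} into a chain of three simpler correspondences, each of which can be analyzed for $\vartheta^{\Gamma}$-equivariance separately, with Theorem~\ref{equivtori} providing the key input. The reduction is immediate: because we assume the lift of ${^\vee}\vartheta$ preserves $Q$, the action of ${^\vee}\vartheta^{\Gamma}$ respects the subspace of $\Xi({^\vee}G^{\Gamma})$ cut out by the triviality condition on $\tau|_{K_{Q}({^\vee}G^{alg}_{x})_{0}/({^\vee}G^{alg}_{x})_{0}}$, so equivariance for \eqref{llc2} restricts to equivariance for \eqref{llc3}.

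The heart of the proof is to establish equivariance across the chain \eqref{abbrev}, namely
$$(\pi,\delta) \longleftrightarrow (\delta,\Lambda^{can}, R_{i\mathbb{R}}^{+}, R_{\mathbb{R}}^{+}) \longleftrightarrow ((y,\Lambda_{1}),\tau_{1}) \longleftrightarrow ((y,\Lambda),\tau).$$
For the first (Langlands classification) bijection, I would first handle the regular infinitesimal character case: pull back along $\vartheta$ the standard representation $M(\Lambda) = \mathrm{Ind}(P\uparrow G(\mathbb{R},\delta)^{can})(X_{L})$ and observe that the induced parabolic becomes $\vartheta^{-1}(P) \subset G(\mathbb{R}, (\vartheta^{\Gamma})^{-1}(\delta))^{can}$, while the positive root systems transform as $R^{+}\circ d\vartheta$. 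Invariance of $z$ and of $z(\rho)$ under $\vartheta$ (using $\vartheta(\delta_{0}^{2}) = z$ and the intrinsic definition of $z(\rho)$) ensures that $\Lambda^{can}\circ\vartheta$ still lies in the appropriate class $\Pi^{zz(\rho)}$. Uniqueness of the Langlands quotient then gives $\pi(\Lambda)\circ\vartheta = \pi(\vartheta^{\Gamma}\cdot\Lambda)$. For singular infinitesimal character, the same argument must be adapted to the definition of $M(\Lambda)$ via the Zuckerman translation functor $\mathsf{\Psi}^{d\Lambda^{can}}_{d\Lambda^{can}+\mu}$; here the compatibility
$$\left(\mathsf{\Psi}^{d\Lambda^{can}}_{d\Lambda^{can}+\mu}X_{L}\right)\circ\vartheta \;\cong\; \mathsf{\Psi}^{d\Lambda^{can}\circ d\vartheta}_{(d\Lambda^{can}+\mu)\circ d\vartheta}(X_{L}\circ\vartheta)$$
is reduced to an identity of distribution characters via Lemma~5.5 of \cite{spehvogan}.

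For the second bijection in \eqref{abbrev}, I would pass through a based Cartan subgroup $T^{\Gamma}\subset G^{\Gamma}$ with $\delta \in T^{\Gamma}-T$ and its dual based Cartan subgroup ${^d}T^{\Gamma}\subset {^\vee}G^{\Gamma}$. Viewing ${^d}T^{\Gamma}$ as an E-group for $(T^{\Gamma},T\cdot\delta)$ per Proposition~13.12 \cite{abv}, the second correspondence becomes exactly \eqref{llctori} for this torus pair, and Theorem~\ref{equivtori} applies verbatim once one checks that $\vartheta^{\Gamma}$ and ${^\vee}\vartheta^{\Gamma}$ preserve the relevant based Cartan structures (which follows from preservation of $\mathcal{W}$ and $\mathcal{D}$). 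For the third bijection, the map on geometric parameters amounts to replacing $\Lambda_{1}$ with its canonical flat $\mathcal{F}(\Lambda_{1})$, which is manifestly equivariant under $d\,{^\vee}\vartheta$, while the character $\tau_{1}$ factors through a component group quotient isomorphic to $A^{loc,alg}_{(y,\Lambda)}$ via a natural inclusion that is visibly ${^\vee}\vartheta^{\Gamma}$-equivariant by construction.

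I expect the main obstacle to be the singular infinitesimal character sub-step of the Langlands classification: translating the equivariance through Zuckerman functors requires checking that conjugation by $\vartheta$ intertwines the functors for matched infinitesimal characters, which is a bookkeeping exercise around coherent continuation but one that must be done carefully so that the dominance conditions defining a \emph{final} $G$-limit character \`a la Definition~11.13 \cite{abv} transport correctly along $d\vartheta$. Everything else is either a direct transcription of torus-level equivariance or a structural consequence of preservation of $\mathcal{W}$, $\mathcal{D}$, and the relevant central invariants $z$ and $z(\rho)$.
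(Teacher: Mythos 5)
Your proposal follows the paper's own argument essentially step for step: reducing from (\ref{llc3}) to (\ref{llc2}), decomposing into the three-link chain (\ref{abbrev}), treating the Langlands classification first for regular and then for singular infinitesimal character via Zuckerman functors, invoking Theorem~\ref{equivtori} through based Cartan subgroups for the middle link, and checking equivariance of the canonical-flat and component-group maps for the last link. You even flag the same point of care (the singular case and final limit characters), which the paper resolves precisely by comparing distribution characters of the translated representations.
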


\section{Twisted endoscopy}
\label{twistsec}

The goal of this section is to merge the notion of endoscopy and
endoscopic lifting as given in 26 \cite{abv} with that of twisted
endoscopy as given by 
Kottwitz and Shelstad (\cite{ks}).  We maintain the same 
hypotheses on $G^{\Gamma}$, ${^\vee}G^{\Gamma}$, $\vartheta$,
${^\vee}\vartheta^{\Gamma}$, etc. as in Section \ref{twistllc1}.

\subsection{Twisted endoscopic data}

The definition of endoscopic data in (2.1.1)-(2.1.4) \cite{ks} is made
relative to the group $G$, not its dual group.  By contrast,
Definition 26.15 \cite{abv} describes an  endoscopic datum relative
to any weak E-group.  We follow the latter point of view, which
applies to any weak E-group, despite our hypotheses.
\begin{definition}
\label{wed}
A \emph{weak endoscopic datum} for $({^\vee}G^{\Gamma},
{^\vee}\vartheta^{\Gamma}, Q)$
 is a pair $(s^{Q}, {^\vee}H^{\Gamma})$ in which
\begin{enumerate}
\item $s^{Q} \in {^\vee}G^{Q}$ and its image $s$ in ${^\vee}G$ is 
${^\vee}\vartheta$-semisimple (see (2.1.3) \cite{ks}).  

\item $ {^\vee}H^{\Gamma} \subseteq  {^\vee}G^{\Gamma}$ is a weak E-group for 
some connected reductive group $H$.  

\item ${^\vee}H^{\Gamma}$ is open in the fixed-point set of 
$\mathrm{Int}(s) \circ {^\vee}\vartheta^{\Gamma}$.
\end{enumerate}
An \emph{endoscopic datum} for $({^\vee}G^{\Gamma},
{^\vee}\vartheta^{\Gamma},Q)$ is a weak endoscopic datum as above
together with an  
${^\vee}H$-conjugacy class $\mathcal{D}_{H}$ of elements of finite order
in ${^\vee}H^{\Gamma} - {^\vee}H$,  whose inner automorphisms
preserve  splittings of ${^\vee}H$ (Proposition 2.11 \cite{abv}).
\end{definition}

\begin{exmp}
  Let us return to Example \ref{autex} in which we take
  $$\vartheta(x) = \tilde{J} \, (x^{\intercal})^{-1} \, \tilde{J}^{-1}, \ x
  \in \mathrm{GL}_{N}$$
and $Q$ to be trivial.  This is the backdrop of \cite{arthurbook} in which
Arthur provides  a class of examples for $s = s^{Q}$ in terms of a
matrix $J_{O,S}$ (1.2 \cite{arthurbook}).   To be more
specific, the matrix
$J_{O,S}$ is defined in terms of a pair of non-negative integers
$(N_{O},N'_{S})$ satisfying $N = N_{O}+ 2N_{S}'$ (pages 8-9
\cite{arthurbook}), and for
${^\vee}\vartheta = \vartheta$ the endoscopic element $s$ equals
$J_{O,S}^{-1} \tilde{J}$ (page 11 \cite{arthurbook}).  We extend
${^\vee}\vartheta$ to ${^\vee}\vartheta^{\Gamma}$ as in Proposition
\ref{sigmares1}.  

If $N_{O}$ is
zero or odd then there is only one weak
E-group ${^\vee}H^{\Gamma}$ satisfying requirements 2-3 of Definition
\ref{wed}.  It is isomorphic to the direct product of $\Gamma$ and
${^\vee}H = \mathrm{SO}_{N_{0}} \times \mathrm{Sp}_{2N_{S}'}$.  When $N_{O}$ is
non-zero and even then  ${^\vee}H$
remains the same.  However,  a non-trivial semidirect product with
$\Gamma$ is possible 
via an outer automorphism in the orthogonal group
$\mathrm{O}_{N_{O}}$, which is in the 
fixed-point set of   $\mathrm{Int}(s) \circ {^\vee}\vartheta^{\Gamma}$
(pages 10-11 \cite{arthurbook}).

In any case, ${^\vee}H^{\Gamma} \cong {^\vee}H \rtimes \Gamma$.  Let
$\gamma \in \Gamma$ be the non-identity element.  
There exists an element $h \in {^\vee}H$ such that
$\mathrm{Int}(h,\gamma)$ fixes a splitting.
One may therefore set $\mathcal{D}_{H}$ equal to the ${^\vee}H$-conjugacy
class of $(h,\gamma)$. When ${^\vee}H^{\Gamma} \cong {^\vee}H \times
\Gamma$ the element $h$ may  be chosen to equal the identity element.
Thus $(J_{O,S}^{-1} \tilde{J}, {^\vee}H \rtimes \Gamma,
  \mathcal{D}_{H})$ defines an  endoscopic datum for
  $(\mathrm{GL}_{N}, {^\vee}\vartheta^{\Gamma}, \{1\})$.

  This procedure exhausts all possibilities for endoscopic data
  attached to $s =J_{O,S}^{-1} \tilde{J}$.
To see this, recall from Section \ref{recall} that  the isomorphism
class of ${^\vee}H^{\Gamma}$ is equivalent to a pair of invariants
$(a, \bar{z})$, where $\bar{z} \in Z({^\vee}H)^{\mathrm{Int}(h,\gamma)}/ (1 +
\mathrm{Int}(h,z)) Z({^\vee}H)$. It is a simple exercise to show that
the fixed-point set 
$Z({^\vee}H)^{\mathrm{Int}(h,\gamma)}$ equals $Z({^\vee}H)$, and in
turn that $\bar{z}$ may be identified with an element in
$Z({^\vee}H)$.   According to Proposition 4.7 (c) \cite{abv}, this
element $\bar{z}$ determines a unique endoscopic datum.
\label{artexample}
\end{exmp}

\begin{exmp}
  \label{24ex}
A general source for endoscopic data  is outlined on page 24 \cite{ks}.
Suppose we are given a Langlands parameter $\phi: W_{\mathbb{R}}
\rightarrow {^\vee}G^{\Gamma}$ (Definition 5.2 \cite{abv}), a semisimple element
$s \in {^\vee}G$, and an automorphism ${^\vee}\vartheta^{\Gamma}$ of
${^\vee}G^{\Gamma}$.    Suppose further that
\begin{equation}
  \label{endsource}
\mathrm{Int}(s) \circ {^\vee}\vartheta^{\Gamma}
\circ \phi =   \phi.
\end{equation}
Then one may take $s^{Q} \in {^\vee}G^{Q}$ to be a lift of $s$ and
${^\vee}H^{\Gamma}$ 
to be the semidirect product of $\phi(W_{\mathbb{R}})$ and the identity
component of the fixed-point set of $\mathrm{Int}(s) \circ
{^\vee}\vartheta^{\Gamma}$ 
in ${^\vee}G$ (\emph{cf.} (26.16) \cite{abv}).  One may take
$\mathcal{D}_{H}$ to be the conjugacy class of $\phi(\gamma)$, where
$\gamma \in \Gamma$ is non-trivial.

We highlight a special case related to Example
\ref{cocycletwist}.  Suppose $\varsigma$ is trivial in that example.
Set 
${^\vee}\vartheta^{\Gamma} = \varsigma_{\mathsf{a}}$ for the
one-cocycle $\mathsf{a}$, and set  $s=1$.   Then (\ref{endsource}) reads as
$\phi =   \mathsf{a} \cdot \phi$.  This case pertains to L-packets fixed by
a central character attached to $\mathbf{a}$.
We note that this is more
restrictive than the twisting of  2 \cite{ks},  
where $\mathbf{a}$ may be taken  in $H^{1}(W_{\mathbb{R}},
Z({^\vee}G))$ not just $H^{1}(\Gamma, Z({^\vee}G))$.  For this reason our 
definition of twisted endoscopic data does not accommodate twisting by
arbitrary quasicharacters.
\end{exmp}

The next definition of an endoscopic group is identical to (26.17)(b)
\cite{abv} if one looks past the twisting data. 

\begin{definition}
Suppose 
$(s^{Q}, {^\vee}H^{\Gamma},
\mathcal{D}_{H})$ is an endoscopic datum for $({^\vee}G^{\Gamma}, 
{^\vee}\vartheta^{\Gamma},Q)$.   Then an \emph{extended endoscopic
group} is a pair $(H^{\Gamma}, \mathcal{W}_{H})$ in which $H^{\Gamma}$ is the
extended group whose first invariant equals that of ${^\vee}H^{\Gamma}$
and whose second invariant is trivial (see Section \ref{recall}).   
\end{definition}

\begin{exmp}
  \label{artexample1}
We continue with the endoscopic datum $(J_{O,S}^{-1}\tilde{J}, 
{^\vee}H \rtimes \Gamma, \mathcal{D}_{H})$ of
Example \ref{artexample}.
The first portion of its extended endoscopic group is 
 of the form $H^{\Gamma} = H \rtimes \Gamma$.  Here, $H =
 \mathrm{SO}_{N_{O}} \times \mathrm{SO}_{2N_{S}'+1}$ when $N_{0}$ is
 non-zero and
 even, and $H = \mathrm{Sp}_{N_{O}-1} \times \mathrm{SO}_{2N_{S}'+1}$
 when $N_{0}$ is odd.  The semidirect product here is
 determined by the semidirect product in ${^\vee}H \rtimes \Gamma$,
 since both semidirect products are determined by the first invariants.

The choice of a Whittaker datum $\mathcal{W}_{H}$ runs along the same
lines as the choice of $\mathcal{D}_{H}$ in Example \ref{artexample}.
One may choose an element $h \in H$ such that $(h,\gamma)$ is a strong 
real form which preserves a Borel subgroup $B$ (Proposition 2.7 and
Proposition 2.14 \cite{abv}).  We may set $\mathcal{W}_{H}$ equal to
the $H$-conjugacy class of $((h,\gamma), N, \chi)$, where $N$ is the
commutator subgroup of $B$ and $\chi$ is any non-degenerate unitary
character of the real points $N(\mathbb{R})$.
The proof of the uniqueness of this Whittaker datum follows the  proof of
the uniqueness of $\mathcal{D}_{H}$ in Example \ref{artexample} 
(Corollary 2.16 and Proposition 3.6 \cite{abv}).
 \end{exmp}

We record the natural
definition of equivalence of endoscopic data  following (26.15)
\cite{abv} and (2.1.6) \cite{ks}.
\begin{definition}
  \label{equivdata}
Two endoscopic data $(s^{Q}, {^\vee}H^{\Gamma}, \mathcal{D}_{H})$ and 
$(s_{1}^{Q}, {^\vee}H_{1}^{\Gamma}, \mathcal{D}_{H_{1}})$ for $({^\vee}G^{\Gamma}, 
{^\vee}\vartheta^{\Gamma},Q)$ are \emph{equivalent} if there 
exists $g^{Q} \in {^\vee}G^{Q}$ such that 
\begin{enumerate}
\item $g {^\vee}H_{1}^{\Gamma}g^{-1} = {^\vee}H^{\Gamma}$

\item $g \mathcal{D}_{H_{1}} g^{-1} = \mathcal{D}_{H}$

\item $g^{Q} s_{1}^{Q} \ {^\vee}\vartheta (g^{Q})^{-1} \in s^{Q}\, 
(Z({^\vee}H)^{\theta_{Z}, Q})_{0}$
\end{enumerate}
Here, $g\in {^\vee}G$ is the image of $g^{Q}$, ${^\vee}\vartheta$ is
the unique lift of ${^\vee}\vartheta^{\Gamma}$ to an automorphism of
$^{\vee}G^{Q}$, and
$(Z({^\vee}H)^{\theta_{Z}, Q})_{0}$ is the identity component of the
preimage of $Z({^\vee}H)^{\theta_{Z}}$ in ${^\vee}G^{Q}$.
\end{definition}

\subsection{Endoscopic lifting}

We begin with a description of the setting for twisted endoscopic
lifting.  We suppose that

\begin{itemize}
\item $\vartheta^{\Gamma}$ is a distinguished automorphism of an extended
  group $(G^{\Gamma}, \mathcal{W})$ such that $\vartheta =
  \vartheta^{\Gamma}_{|G}$ is of finite order.

  \item $^{\vee}\vartheta^{\Gamma}$ is
a distinguished automorphism of an E-group $({^\vee}G,
\mathcal{D})$ for 
$G$, which is compatible with $\vartheta^{\Gamma}$ and is of finite order.  

\item $Q$ is a quotient of $\pi_{1}(^{\vee}G)^{alg}$ by a closed
  subgroup.

\item $(s^{Q}, {^\vee}H, \mathcal{D}_{H})$ is an endoscopic datum for
  $({^\vee}G^{\Gamma},  {^\vee}\vartheta^{\Gamma},Q)$.

\item  $(H, \mathcal{W}_{H})$ is an extended endoscopic group for
  $(s^{Q}, {^\vee}H, \mathcal{D}_{H})$.  
\end{itemize}

The assumptions  of finite order
are new to this 
section and deserve some scrutiny.  What is actually at issue here is
that the restriction of $\vartheta$ to the centre $Z(G)$ must be of finite
order.  If an automorphism has this property, then it may be
composed with an inner automorphism so that it fixes a splitting and
is of finite order ((16.5)  \cite{humphreys}).  Furthermore, the map
$\vartheta \mapsto \Psi_{0}(\vartheta)$, used in Proposition
\ref{thetares1} and the constructions of Example \ref{autex}, is
insensitive to composition with inner automorphisms.  One may therefore
make such an adjustment without affecting the property of being an
automorphism of $(G^{\Gamma}, \mathcal{W})$ and without changing
${^\vee}\vartheta^{\Gamma}$.  In addition, such an adjustment has no
effect on the actions on the equivalence classes in
$\Pi^{z}(G/\mathbb{R})$.

This leaves us with the question of why one should assume that
$\vartheta$ is of finite order.  It is clear from the
discussion following the definition of compatibility (Definition
\ref{compdef}) that $\vartheta$ is of finite order if and only if
the distinguished automorphism ${^\vee}\vartheta =
{^\vee}\vartheta^{\Gamma}_{|\, {^\vee}G}$ is of 
finite order.  The latter condition is a fundamental requirement in
the machinery of 25 \cite{abv}  employed in endoscopic
lifting.  This is ultimately the justification for the finiteness assumption.

Although the automorphism ${^\vee}\vartheta^{\Gamma}$ is assumed to be
of finite order, the automorphism 
$\mathrm{Int}(s) \circ {^\vee}\vartheta^{\Gamma}$ used in the
definition of endoscopic data need not be.  This also gets
in the way of using  25 \cite{abv}.   We may as well 
take care of this problem now.

This problem occurs in standard endoscopy too
and may be circumvented by replacing $s^{Q}$ with an element
$(s^{Q})'$ of finite order ((26.21) \cite{abv}).  This circumvention is made
possible by Lemma 26.20 \cite{abv}, which relies on the semisimplicity
of $s$.  We need an analogue of this semisimplicity for $\mathrm{Int}(s)
\circ {^\vee}\vartheta^{\Gamma}$.
\begin{lem}
$(s,{^\vee}\vartheta^{\Gamma})$  is a semisimple element in the
algebraic group ${^\vee}G^{\Gamma} \rtimes \langle {^\vee}\vartheta^{\Gamma}\rangle$. 
\end{lem}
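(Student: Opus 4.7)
The plan is to reduce semisimplicity of $x := (s, {^\vee}\vartheta^{\Gamma})$ in $\tilde{G} := {^\vee}G^{\Gamma} \rtimes \langle {^\vee}\vartheta^{\Gamma}\rangle$ to semisimplicity of a power $x^{n}$ lying in the identity component ${^\vee}G$, and then to extract the latter from the hypothesis on $s$ via the companion semidirect product ${^\vee}G \rtimes \langle {^\vee}\vartheta\rangle$.  The main tool throughout will be the standard Jordan decomposition principle in characteristic zero: an element $y$ of a linear algebraic group is semisimple if and only if $y^{N}$ is, for some (equivalently every) $N \geq 1$.  Indeed, writing $y = y_{s}y_{u}$ one has $y^{N} = y_{s}^{N}y_{u}^{N}$, so semisimplicity of $y^{N}$ forces $y_{u}^{N}=1$, hence $y_{u}=1$.

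First, I would let $n$ denote the order of ${^\vee}\vartheta^{\Gamma}$ (finite by the standing assumption of this section) and $m$ the order of ${^\vee}\vartheta = {^\vee}\vartheta^{\Gamma}|_{{^\vee}G}$, so that $m \mid n$.  Using the semidirect product multiplication together with the fact that ${^\vee}\vartheta^{\Gamma}$ restricts to ${^\vee}\vartheta$ on ${^\vee}G$, I would verify the formula
\[
x^{k} = \bigl(s \cdot {^\vee}\vartheta(s) \cdots {^\vee}\vartheta^{k-1}(s),\, ({^\vee}\vartheta^{\Gamma})^{k}\bigr), \qquad k \geq 1.
\]
Setting $N_{k}(s) = s \cdot {^\vee}\vartheta(s) \cdots {^\vee}\vartheta^{k-1}(s)$, this gives $x^{n} = (N_{n}(s), 1) \in {^\vee}G$, and grouping factors by residue modulo $m$ yields $N_{n}(s) = N_{m}(s)^{n/m}$ (since $N_{m}(s)$ is ${^\vee}\vartheta$-invariant, these factors commute and are all equal).

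Next, I would unpack the hypothesis that $s$ is ${^\vee}\vartheta$-semisimple in the sense of (2.1.3) of \cite{ks}, which asserts precisely that $(s, {^\vee}\vartheta)$ is a semisimple element of ${^\vee}G \rtimes \langle {^\vee}\vartheta\rangle$.  Applying the power principle in that smaller semidirect product, its $m$-th power $(N_{m}(s), 1)$ is semisimple there; since ${^\vee}G$ is a closed subgroup, $N_{m}(s)$ is semisimple as an element of ${^\vee}G$.  Hence $N_{n}(s) = N_{m}(s)^{n/m}$ is semisimple in ${^\vee}G$, which makes $x^{n}$ semisimple in $\tilde{G}$, and the power principle once more produces the semisimplicity of $x$ itself.

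I do not anticipate a real obstacle here.  The only points requiring care are keeping the notion of semisimplicity aligned across the chain of closed embeddings ${^\vee}G \subset {^\vee}G^{\Gamma} \subset \tilde{G}$ of linear algebraic groups, and writing out the semidirect product multiplication precisely enough that the formula for $x^{k}$ is unambiguous.  Both are standard in characteristic zero.
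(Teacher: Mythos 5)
Your argument is correct, but it is substantially longer than the paper's, which treats this as a near-immediate consequence of two citations.  The paper's proof is two sentences: first observe that $\tilde G = {^\vee}G^\Gamma \rtimes \langle {^\vee}\vartheta^\Gamma \rangle$ is an algebraic group (because ${^\vee}G^\Gamma$ is algebraic and ${^\vee}\vartheta^\Gamma$ is an algebraic automorphism of finite order), and then observe that (2.1.3) of \cite{ks} is, by \S 7 of \cite{steinberg}, precisely the assertion that $(s, {^\vee}\vartheta^\Gamma)$ is a semisimple element of ${^\vee}G \rtimes \langle {^\vee}\vartheta^\Gamma \rangle$, hence of the ambient $\tilde G$.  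What makes this work is that semisimplicity of an element $g$ in a disconnected algebraic group with reductive identity component is read off from the automorphism $\mathrm{Int}(g)$ of that identity component, and $\mathrm{Int}(s,{^\vee}\vartheta^\Gamma)$ and $\mathrm{Int}(s,{^\vee}\vartheta)$ agree on ${^\vee}G$.  You instead transfer the hypothesis from ${^\vee}G \rtimes \langle {^\vee}\vartheta \rangle$ to $\tilde G$ by raising to powers so as to land in the common identity component ${^\vee}G$.  This is a legitimate, more elementary route that sidesteps having to compare two semidirect products whose cyclic factors may have different orders ($m$ versus $n$ in your notation, with $m < n$ permitted); the cost is the extra bookkeeping, whereas the paper simply delegates the whole matter to Steinberg.

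One small flaw in the write-up: your parenthetical justification of $N_{n}(s) = N_{m}(s)^{n/m}$ --- grouping the factors ${^\vee}\vartheta^i(s)$ by residue class modulo $m$ and invoking ${^\vee}\vartheta$-invariance of $N_m(s)$ --- does not hold up.  In general ${^\vee}\vartheta\bigl(N_m(s)\bigr) = s^{-1}N_m(s)s$ is only conjugate to $N_m(s)$, not equal to it, and re-sorting the factors into residue classes would anyway require a commutativity that is not available.  The identity is nevertheless true, via the simpler observation that you can cut the product $N_n(s) = \prod_{i=0}^{n-1}{^\vee}\vartheta^i(s)$ into $n/m$ consecutive blocks of length $m$: the $j$-th block is ${^\vee}\vartheta^{jm}\bigl(N_m(s)\bigr) = N_m(s)$ because ${^\vee}\vartheta^m = 1$, and no commutativity or invariance is needed.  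With that fix the argument is complete.
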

\begin{proof}
The obvious semidirect product ${^\vee}G^{\Gamma} \rtimes \langle
{^\vee}\vartheta^{\Gamma}\rangle$ is an algebraic group as
 ${^\vee}G^{\Gamma}$ is an algebraic group,
${^\vee}\vartheta^{\Gamma}$ is an algebraic morphism, and
${^\vee}\vartheta^{\Gamma}$ is of finite 
order.   The
first property of Definition 5.1 is equivalent to $(s,
{^\vee}\vartheta^{\Gamma})$ being a semisimple element in ${^\vee}G \rtimes \langle
{^\vee}\vartheta^{\Gamma} \rangle$ (7 \cite{steinberg}).
\end{proof}
We may now apply Lemma 26.20 \cite{abv} to the  disconnected algebraic group
${^\vee}G^{\Gamma} \rtimes \langle {^\vee}\vartheta^{\Gamma} \rangle$ and the
semisimple element $(s, {^\vee}\vartheta^{\Gamma})$ to conclude that
there is an element $(s', {^\vee}\vartheta^{\Gamma}) \in (s,
{^\vee}\vartheta^{\Gamma}) \,Z({^\vee}H^{\Gamma})_{0}$ of finite order,
and that ${^\vee}H^{\Gamma}$ is open in the fixed-point set of
$\mathrm{Int}(s') \circ {^\vee}\vartheta^{\Gamma}$.
In particular,  $\mathrm{Int}(s') \circ
{^\vee}\vartheta^{\Gamma}$ is a finite-order automorphism and $s' \in
s Z({^\vee}H)^{\theta_{Z}}_{0}$.   There exists a 
lift $(s^{Q})' \in {^\vee}G^{Q}$ of $s'$ such that $(s^{Q})' \in
s^{Q} (Z({^\vee}H)^{\theta_{Z}, Q})_{0}$ and by Definition
\ref{equivdata} $((s^{Q})', {^\vee}H^{\Gamma}, \mathcal{D}_{H})$ is an
equivalent endoscopic datum with the desired finiteness property
(\emph{cf.}  (26.21) \cite{abv}).  We
may and shall assume from now on that $s^{Q} = (s^{Q})'$.

Calling to mind the setting for this section once more, let
$$\epsilon:{^\vee}H^{\Gamma} \rightarrow {^\vee}G^{\Gamma}$$
be the  inclusion
map.  The map $\epsilon$ induces several other maps on sets of
objects we have recalled in Section \ref{recall}.  For example there
is the map $X(\epsilon) : X({^\vee}H^{\Gamma}) \rightarrow
X({^\vee}G^{\Gamma})$ (Corollary 6.21 \cite{abv}) defined by
\begin{equation}
  \label{xep}
X(\epsilon)(y, \Lambda) = (\epsilon(y), \mathcal{F}\circ
d\epsilon(\Lambda)), \ (y,\Lambda) \in X({^\vee}H^{\Gamma}).
\end{equation}
There is also the map $\epsilon^{\bullet}: {^\vee}H^{Q_{H}}
\rightarrow {^\vee}G^{Q}$ ((5.14)(c) \cite{abv}) in which $Q_{H} = Q
\cap ({^\vee}H^{Q})_{0}$ ((26.1)(c) \cite{abv}).  The pair of maps
$(X(\epsilon), \epsilon^{\bullet})$ transfers the
${^\vee}H^{Q_{H}}$-action on $X({^\vee}H^{\Gamma})$ to the
${^\vee}G^{Q}$-action on $X({^\vee}G^{\Gamma})$, and therefore induces
a map of the corresponding orbits ((7.17)(c) \cite{abv}).  Similarly,
the pair $(X(\epsilon), \epsilon^{\bullet})$ induces homomorphisms
between isotropy subgroups and their component groups
$$A^{loc}(\epsilon): A^{loc, Q_{H}}_{(y, \Lambda)} \rightarrow A^{loc,
  Q}_{X(\epsilon)(y,\Lambda)}$$ 
((7.19)(c) \cite{abv}).
\subsection{Standard endoscopic lifting}
\label{standendolift}

To explain endoscopic lifting, we must now describe how $\epsilon$ behaves
on the level of sheaves on $X({^\vee}G^{\Gamma})$.  We will assume
that the reader has some familiarity with constructible and perverse
sheaves, and review some important facts provided in \cite{abv}.

The extension by zero map ((7.10)(c) \cite{abv})
$$\mu : \Xi({^\vee}G^{\Gamma})^{Q} \rightarrow
\mathrm{Ob}\, \mathcal{C}(X({^\vee}G^{\Gamma}), {^\vee}G^{Q})$$
yields a bijection between the  complete geometric parameters of
type $Q$ and the irreducible  ${^\vee}G^{Q}$-equivariant constructible
sheaves of complex vector spaces on $X({^\vee}G^{\Gamma})$.    The
perverse extension map ((7.10)(d) \cite{abv}, Definition 1.4.22 \cite{bbd})
$$P:  \Xi({^\vee}G^{\Gamma})^{Q} \rightarrow
\mathrm{Ob}\, \mathcal{P}(X({^\vee}G^{\Gamma}), {^\vee}G^{Q})$$
yields a bijection between the  complete geometric parameters of
type $Q$ and the irreducible  ${^\vee}G^{Q}$-equivariant perverse
sheaves of complex vector spaces on $X({^\vee}G^{\Gamma})$.  Let
$K\mathcal{C}(X({^\vee}G^{\Gamma}), {^\vee}G^{Q})$ and $K
\mathcal{P}(X({^\vee}G^{\Gamma}), {^\vee}G^{Q})$ denote the
Grothendieck groups of the categories
$\mathcal{C}(X({^\vee}G^{\Gamma}), {^\vee}G^{Q})$ and
$\mathcal{P}(X({^\vee}G^{\Gamma}), {^\vee}G^{Q})$ respectively.  There
is an isomorphism $\chi : K\mathcal{P}(X({^\vee}G^{\Gamma}),
{^\vee}G^{Q}) \rightarrow K\mathcal{C}(X({^\vee}G^{\Gamma}),
{^\vee}G^{Q})$ defined by
\begin{equation}
  \label{chimap}
\chi(P) = \sum_{i} (-1)^{i} H^{i}P, \ P \in \mathrm{Ob}
\mathcal{P}(X({^\vee}G^{\Gamma}), {^\vee}G^{Q})
\end{equation}
(Lemma 7.8 \cite{abv}) which allows one to identify the
two Grothendieck groups.

Correspondence (\ref{llc3})
furnishes  a natural perfect pairing 
\begin{equation}
  \label{pp}
\langle \, , \, \rangle : K \Pi^{z}(G/\mathbb{R})_{\hat{Q}} \times
K\mathcal{C}(X({^\vee}G^{\Gamma}), {^\vee}G^{Q}) \rightarrow
\mathbb{Z}
\end{equation}
with the Grothendieck group of the category of finite length
representations of strong real forms of $G$ of type $\hat{Q}$ (Definition 15.11
\cite{abv}).  In essence, the irreducible constructible sheaves
$\mu(\xi)$ are paired with characters of standard
  representations $M(\xi)$ for $\xi \in \Xi({^\vee}G^{\Gamma})^{Q}$. This
  pairing is equivalent to another perfect pairing, 
which we abusively denote the same way,  
\begin{equation}
  \label{pp1}
\langle \, ,\,  \rangle : K \Pi^{z}(G/\mathbb{R})_{\hat{Q}} \times
K\mathcal{P}(X({^\vee}G^{\Gamma}), {^\vee}G^{Q}) \rightarrow
\mathbb{Z}
\end{equation}
via the isomorphism $\chi$.  Using the Kazhdan-Lusztig-Vogan algorithm, this
second pairing is shown to be a simple pairing between irreducible
perverse sheaves $P(\xi)$  and  characters of irreducible
representations $\pi(\xi)$ for $\xi \in \Xi({^\vee}G^{\Gamma})^{Q}$
((11.2)(e), Theorem  15.12 and  
Theorem 26.2 \cite{abv}).

We are now in the position to describe endoscopic lifting in the
standard case,  \emph{i.e.} when  ${^\vee}\vartheta^{\Gamma}$ is
trivial.
It is essentially given by the restriction homomorphism
\begin{equation}
  \label{epstar}
\epsilon^{*} : K\mathcal{C}(X({^\vee}G^{\Gamma}), {^\vee}G^{Q})
\rightarrow K\mathcal{C}(X({^\vee}H^{\Gamma}), {^\vee}H^{Q_{H}})
\end{equation}
which is defined by  the inverse image functor on equivariant constructible
sheaves (Proposition 7.18 and (7.19)(d) \cite{abv}).  The usual notion
of endoscopic lifting between linear combinations of characters may be
recovered by combining 
$\epsilon^{*}$ with the prefect pairing (\ref{pp}) as follows.  Let
$\eta_{H}$ be a strongly stable complex linear combination of characters of 
representations in $\Pi^{z_{H}}(H/\mathbb{R})_{\hat{Q}_{H}}$
(\emph{cf.} Definition 18.6 and Definition 26.13 \cite{abv}). 
Then (\ref{pp}) allows us to view $\eta_{H}$ as a complex-valued
homomorphism on $K\mathcal{C}(X({^\vee}H^{\Gamma}), {^\vee}H^{Q_{H}})$
(Theorem 26.2 \cite{abv}). Taken this way, endoscopic lifting is
defined by $\epsilon_{*}$, where 
\begin{equation}
  \label{endolift}
\epsilon_{*}\eta_{H}( F_{G}) = \langle \epsilon_{*}\eta_{H},
F_{G}\rangle= \langle \eta_{H}, 
\epsilon^{*}F_{G} \rangle , \ F_{G}
\in K\mathcal{C}(X({^\vee}G^{\Gamma}), {^\vee}G^{Q})
\end{equation}
(Definition 26.3 and Definition 26.18 \cite{abv}).  One may regard
$\epsilon_{*}\eta_{H}$ as a homomorphism on
$K\mathcal{C}(X({^\vee}G^{\Gamma}), {^\vee}G^{Q})$ or, somewhat more
traditionally,  as a formal
complex combination of characters of representations in $\Pi^{z}(G/\mathbb{R})_{\hat{Q}}$.

We have just described endoscopic lifting in terms of constructible
sheaves and the pairing (\ref{pp}).  In (\ref{endolift}) one should
regard $\eta_{H}$ as a 
linear combination of characters of standard representations. We could
equally well have 
described endoscopic lifting in terms of perverse sheaves and pairing
(\ref{pp1}).     To do this,  one should regard $\eta_{H}$ as a
linear combination of 
irreducible characters.

\subsection{Twisted endoscopic lifting}
\label{twistendolift}

We now consider \emph{twisted} endoscopic lifting,
\emph{i.e.} when  
${^\vee}\vartheta^{\Gamma}$ is a non-trivial outer automorphism.  In the twisted case not all
irreducible elements of  $K\mathcal{C}(X({^\vee}G^{\Gamma}),
{^\vee}G^{Q})$ are relevant in (\ref{epstar}).
This is described in 25 
\cite{abv}, although in much less detail than the case of standard
endoscopy.   For consistency with 25 
\cite{abv}, set $\sigma = 
\mathrm{Int}(s) \circ {^\vee}\vartheta^{\Gamma}$, keeping in mind that
this is an automorphism of order $m < \infty$.  We note that the
automorphism $\sigma$ acts 
on ${^\vee}G^{Q}$ (through  a unique lift).  Furthermore it acts  on
$X({^\vee}G^{\Gamma})$ ((\ref{Xact})) in a manner that is compatible
with the ${^\vee}G^{Q}$-action on $X({^\vee}G^{\Gamma})$ ((25.1)(b)
\cite{abv}).  It follows that $\sigma$ permutes the
${^\vee}G^{Q}$-orbits of $X({^\vee}G^{\Gamma})$, and induces an isomorphism
$A^{loc,Q}_{x} \mapsto A^{loc,Q}_{\sigma\cdot x}$
for any $x \in X({^\vee}G^{\Gamma})$.  We first provide an
example of the type of sheaves which are relevant to (\ref{epstar}) in
twisted endoscopy.

\begin{exmp}
Suppose $x \in X({^\vee}G^{\Gamma})$ and $\sigma(x)= x$.  Suppose
further that  $S
= {^\vee}G^{Q} \cdot x$ and $\tau = \sum_{j=1}^{k} \tau_{j}$ is  a sum of
irreducible representations of $A_{x}^{loc,Q}$.  The automorphism
$\sigma$ induces an automorphism of 
$A_{x}^{loc,Q}$.  We suppose that
$\tau \circ \sigma \cong \tau$.  Under these circumstances $\tau$ extends
to a representation of $A_{x}^{loc, Q} \rtimes \langle \sigma
\rangle$, where $\tau(\sigma)$ is an intertwining operator between
$\tau \circ \sigma $ and $\tau$.  Note that the choice for
$\tau(\sigma)$ is not canonical as $\tau(\sigma)$ may be replaced by
$\zeta \tau(\sigma)$ for any $m$th root of unity $ \zeta \in
\mathbb{C}^{\times}$.

We wish to translate this setup into the
realm of constructible sheaves.  Let $\xi_{j} = (S, \tau_{j})$.
The pair $(S,\tau)$ is equivalent to a 
${^\vee}G^{Q}$-equivariant local system
$\oplus_{j=1}^{k}\mathcal{V}_{\xi_{j}}$ on  $S$ 
((7.4) and Lemma 7.3 (c) \cite{abv}).  The automorphism $\sigma$ passes to an
automorphism of $\oplus_{j=1}^{k} \mathcal{V}_{\xi_{j}}$ in the guise of
$\tau(\sigma)$,   a linear isomorphism of the stalks.  This
automorphism of $\oplus_{j=1}^{k} \mathcal{V}_{\xi_{j}}$ further
passes to an automorphism of the 
${^\vee}G^{Q}$-equivariant 
constructible sheaf $\oplus_{j=1}^{k} \mu(\xi_{j})$  through the
same isomorphism of the stalks.  We abusively denote the isomorphism
of $\oplus_{j=1}^{k} \mu(\xi_{j})$ by 
$\tau(\sigma)$.  The intertwining property of
$\tau(\sigma)$ above is tantamount to a compatibility
condition with the $\sigma$-action on ${^\vee}G^{Q}$.  The pairs 
$(\oplus_{j=1}^{k} \mu(\xi_{j}), \tau(\sigma))$, become the principal objects in
twisted endoscopy.
\label{sfix}
\end{exmp}

A slight generalization of the sheaves of Example \ref{sfix} is given  in (25.7)
\cite{abv}.  There the objects of the category
$\mathcal{C}(X({^\vee}G^{\Gamma}), {^\vee}G^{Q}, \sigma)$ 
are pairs $(C, \sigma_{C})$, in which $C$ is a
${^\vee}G^{Q}$-equivariant constructible sheaf, and $\sigma_{C}$ is
a finite-order automorphism of $C$ compatible with the
$\sigma$-action on ${^\vee}G^{Q}$ and $X({^\vee}G^{\Gamma})$.  The
compatibility condition is natural, but a bit unwieldy to state.  As
it is not given in \cite{abv}, we provide it here:
Given any $x \in X({^\vee}G^{\Gamma})$, $S = {^\vee}G^{Q} 
\cdot x$ and stalk  $C_{x}$, there is a representation
$\tau_{S}^{loc}(C)$ of $A_{S}^{loc,Q}$ with space $C_{x}$, obtained by
restricting $C$ to $S$ (Corollary 23.3
\cite{abv}). The compatibility condition on the  
morphism $\sigma_{C}: C \rightarrow C$
is that it induces a linear isomorphism $C_{x}
\rightarrow C_{\sigma \cdot x}$ intertwining $\tau^{loc}_{S}(C)$ with
$\tau^{loc}_{\sigma(S)}(C)$ for every $x$.
In the twisted case the restriction homomorphism (\ref{epstar}) of
standard endoscopy is
replaced with
\begin{equation}
  \label{epstar1}
\epsilon^{*} : K\mathcal{C}(X({^\vee}G^{\Gamma}), {^\vee}G^{Q}, \sigma)
\rightarrow K\mathcal{C}(X({^\vee}H^{\Gamma}), {^\vee}H^{Q_{H}},
\sigma).
\end{equation}
Despite appearances, the Grothendieck group
$K\mathcal{C}(X({^\vee}H^{\Gamma}), {^\vee}H^{Q_{H}}, 
\sigma)$ on the right differs only superficially from
$K\mathcal{C}(X({^\vee}H^{\Gamma}), {^\vee}H^{Q_{H}})$.  Indeed,
$\sigma$ acts trivially on all objects defined from ${^\vee}H$
(\emph{cf.} Definition \ref{wed}).   In consequence,
all of the intertwining operators on stalks may be identified with
their eigenvalues, which are $m$th roots of unity.  This leads to an
isomorphism
\begin{equation}
  \label{tensiso}
K\mathcal{C}(X({^\vee}H^{\Gamma}), {^\vee}H^{Q_{H}}, \sigma) \cong
K\mathcal{C}(X({^\vee}H^{\Gamma}), {^\vee}H^{Q_{H}}) \otimes \mathbb{Z}[U_{m}]
\end{equation}
where $\mathbb{Z}[U_{m}] \subset \mathbb{C}$ is the group algebra of
the $m$th roots of unity (\emph{cf.}
(25.7)(e) \cite{abv}). 

It is worth reflecting on  the relationship between (\ref{epstar})
and (\ref{epstar1}).  In the
case of standard endoscopy,  $\sigma$ is simply 
$\mathrm{Int}(s)$, and $\sigma \cdot x \in S = {^\vee}G^{Q} \cdot x$
for every $x \in X({^\vee}G^{\Gamma})$. The action of $\sigma$ falls
under the umbrella of the equivariance conditions on $C
\in \mathcal{C}(X({^\vee}G^{\Gamma}), {^\vee}G^{Q})$.  In particular
the equivariance produces an
identification of stalks $C_{x} = C_{\sigma \cdot x}$,
and the identity map
on $C_{x}$  is the canonical choice of intertwining operator between
$\tau^{loc}_{S}(C)$ and $\tau^{loc}_{\sigma(S)}(C)$.  In
other words, one may define $\sigma_{C}$ by taking it to be the
identity map on all stalks.  Roots of unity still play a role in the
definition of $\mathcal{C}(X({^\vee}G^{\Gamma}), {^\vee}G^{Q},
\sigma)$ as in Example \ref{sfix}, but they play a  superficial role as
in $\mathcal{C}(X({^\vee}H^{\Gamma}), {^\vee}H^{Q_{H}},
\sigma)$.  In summary, for standard endoscopy
(\ref{epstar1}) is of the form
\begin{equation}
  \label{standend}
\epsilon^{*} : K\mathcal{C}(X({^\vee}G^{\Gamma}), {^\vee}G^{Q})
\otimes \mathbb{Z}[U_{m}]
\rightarrow K\mathcal{C}(X({^\vee}H^{\Gamma}), {^\vee}H^{Q_{H}},
\sigma) \otimes \mathbb{Z}[U_{m}],
\end{equation}
where $\epsilon^{*}$ is the identity map on $U_{m}$.
This is the trivial extension of (\ref{epstar}) to the tensor
products with $\mathbb{Z}[U_{m}]$.

By contrast, in the case of twisted endoscopy, $\sigma$ is an outer
automorphism and the equivariance of $C$ does not necessarily allow us to
identify a stalk $C_{x}$ with $C_{\sigma \cdot x}$.  Even in the case
that $\sigma \cdot x = x$ as in Example \ref{sfix} there need not exist
a canonical choice of intertwining operator on $C_{x}$.

In order to make a connection with Arthur packets in the next
section, it is preferable to express twisted endoscopic lifting using
perverse sheaves rather than constructible sheaves.
The obvious extension of the foregoing discussion to chain complexes
of equivariant constructible sheaves is 
defined by specifying that the sheaves in each degree of a complex belong to
$\mathcal{C}(X({^\vee}G^{\Gamma}), {^\vee}G^{Q}, \sigma)$.  One
defines the category of perverse sheaves
$\mathcal{P}(X({^\vee}G^{\Gamma}), {^\vee}G^{Q}, \sigma)$ using the
same principle.  We call an object in
$\mathcal{P}(X({^\vee}G^{\Gamma}), {^\vee}G^{Q}, 
\sigma)$ a \emph{twisted perverse sheaf}.  Such an object  is a  pair
$(P,\sigma_{P})$, where $P = P^{\bullet}$ is 
a constructible complex (in the derived category and with additional
structure), and 
$\sigma_{P}$  is an automorphism  of $P$.
It is easily verified that $\sigma_{P}$ induces an automorphism of
$H^{i}P$ for all $i \in \mathbb{Z}$.
In consequence, one may extend (\ref{chimap}) to an isomorphism
\begin{equation}
  \label{chimap1}
  \chi: K \mathcal{P}(X({^\vee}G^{\Gamma}), {^\vee}G^{Q},
\sigma) \rightarrow K\mathcal{C}(X({^\vee}G^{\Gamma}), {^\vee}G^{Q},
\sigma)
\end{equation}
in which $\chi(\sigma_{P}) $  is the induced  morphism on
$\oplus_{i \in \mathbb{Z}} H^{i} P$.
We abusively denote the restriction homomorphism
$$\epsilon^{*} : K \mathcal{P}(X({^\vee}G^{\Gamma}), {^\vee}G^{Q},
\sigma) \rightarrow  K \mathcal{P}(X({^\vee}H^{\Gamma}), {^\vee}H^{Q_{H}},
\sigma)$$
as in (\ref{epstar1}).  The codomain here may be simplified as in
(\ref{tensiso}).

Our next step is to describe twisted endoscopic
lifting as a map on stable virtual characters along the lines of
(\ref{endolift}).  Rather than working with arbitrary stable virtual
characters $\eta_{H}$ as in (\ref{endolift}) one may work, without loss
of generality, with a
specific family of strongly stable virtual characters
$\eta_{S}^{mic, Q_{H}}$, where $S$ runs over all ${^\vee}H$-orbits of
$X({^\vee}H^{\Gamma})$ (Corollary 19.16 \cite{abv}).  The definition
of $\eta_{S}^{mic,Q_{H}}$ rests on deep results in microlocal geometry which
we shall not attempt to sketch in any detail.  We shall settle for a
peripheral  description of
$\eta_{S}^{mic,Q_{H}}$  in terms of representations $\tau^{mic}_{S}(P)$,
which are  analogues of the representations
$\tau_{S}^{loc}(C)$ mentioned earlier.  The
representation $\tau_{S}^{loc}(C)$ 
of $A^{loc,Q_{H}}_{S}$ defines the local system equal to the restriction of $C
\in \mathcal{C}(X( {^\vee}H^{\Gamma}), {^\vee}H^{Q_{H}})$
to $S$.  Similarly, given $P \in \mathcal{P}(X( {^\vee}H^{\Gamma}),
{^\vee}H^{Q_{H}})$ there exists a representation $\tau^{mic}_{S}(P)$ of the
\emph{equivariant micro-fundamental group} $A_{S}^{mic,Q_{H}}$ which defines
a ${^\vee}H^{Q_{H}}$-equivariant local system $Q^{mic}(P)$ on a
space determined by ${^\vee}H^{Q_{H}}$ and $X({^\vee}H^{\Gamma})$
((24.1), Definition 24.7, Theorem 24.8 and 
Corollary 24.9 \cite{abv}).   Passing over the  sophisticated theory
underlying these objects, we may write
$$\eta_{S}^{mic,Q_{H}} =  \sum_{\xi \in \Xi({^\vee}H)^{Q_{H}}} e(\xi)
(-1)^{\dim S_{\xi} - \dim S }\ 
\dim \tau_{S}^{mic}(P(\xi))\  \pi(\xi)$$
(Definition 19.13, Definition 19.15, Corollary 19.16 and Corollary
24.9 (a) \cite{abv}).
This is a finite sum in which $e(\xi)  = \pm 1$ (Definition 15.8
\cite{abv}), $S_{\xi}$ is the ${^\vee}H$-orbit in $\xi$ ((7.4)
\cite{abv}), and $\pi(\xi)$ is an irreducible (character of a)
representation in $\Pi^{z_{H}}(H/\mathbb{R})_{\hat{Q}_{H}}$
given by (\ref{llc3}).    More generally, for any $h \in A_{S}^{mic,Q_{H}}$ one
may define a formal complex virtual representation
\begin{equation}
  \label{etadef}
  \eta_{S}^{mic,Q_{H}}(h) =  \sum_{\xi \in \Xi({^\vee}H)^{Q_{H}}} e(\xi)
(-1)^{\dim S_{\xi} - \dim S }\ \mathrm{tr}\left(
  \tau_{S}^{mic}(P(\xi))(h)\right) \,  \pi(\xi)
\end{equation}
(Definition 26.8 \cite{abv}).
According to Lemma 26.9 \cite{abv},
\begin{equation}
  \label{etapair}
\langle \eta_{S}^{mic,Q_{H}}(h), P(\xi') \rangle = (-1)^{\dim S}
\, \mathrm{tr}\left(
  \tau_{S}^{mic}(P(\xi'))(h)\right), \ \xi' \in
  \Xi({^\vee}H^{\Gamma})^{Q_{H}}
  \end{equation}
using pairing (\ref{pp1}).

We would like  to define an extension of $\eta_{S}^{mic, Q_{H}}$ to 
$A_{S}^{mic,Q_{H}}  \times \langle \sigma \rangle$ in order to
incorporate the twisted perverse sheaves of $\mathcal{P}(X({^\vee}H^{\Gamma}), 
{^\vee}H^{Q_{H}}, \sigma)$. Suppose $\xi \in \Xi({^\vee}H^{\Gamma})^{Q_{H}}$ and
 $(P(\xi), \sigma_{P(\xi)}) \in \mathcal{P}(X({^\vee}H^{\Gamma}),
{^\vee}H^{Q_{H}}, \sigma)$. We first note that
$\sigma_{P(\xi)}$ may be identified with an $m$th root of unity.
This identification is the essence of the notion of ``eigenobjects''
on page 272 \cite{abv}, which goes as follows:
Since $\sigma$ fixes all $x \in X({^\vee} H^{\Gamma})$, the automorphism
$\sigma_{P(\xi)}$ must induce a linear 
automorphism on each stalk  $H^{i}P(\xi)_{x}$.  Let $\zeta \in \mathbb{C}$ be an
eigenvalue on some stalk $H^{i}P(\xi)_{x}$ and let $\zeta_{P(\xi)}$ be the
automorphism of $P(\xi)$ given by multiplication by $\zeta$ on every
stalk.  Clearly, $\zeta \in U_{m}$  and
$\ker (\sigma_{p(\xi)} - \zeta_{P(\xi)})$ is a non-zero 
perverse subsheaf of the irreducible sheaf $P(\xi)$.
Hence, $\sigma_{P(\xi)} = \zeta_{P(\xi)}$.   

The next step in the extension of $\eta_{S}^{mic, Q_{H}}$ is the requisite
extension of $\tau_{S}^{mic}(P(\xi))$ to $A_{S}^{mic,Q_{H}}  \times
\langle \sigma \rangle$.  The ${^\vee}H$-orbit $S$  specifies special
stalks of the local system $Q^{mic}(P(\xi))$ (Lemma 24.3 \cite{abv}).
We choose one, which we cryptically denote by 
$Q^{mic}(P(\xi))_{x,\nu}$ ($x$ belongs to $S$, see Definition 24.7 \cite{abv}).
Recall that the local system $Q^{mic}(P(\xi))$ corresponds to the representation
$\tau_{S}^{mic}(P(\xi))$.  
It is a  representation  of $A_{S}^{mic, Q_{H}}$ on $Q^{mic}(P(\xi))_{x,\nu}$.
The automorphism $\sigma_{P(\xi)}$  induces an automorphism of
$Q^{mic}(P(\xi))_{x,\nu}$  ((25.1)(h), (24.10)(b) and
Definition 24.11 \cite{abv}).  We denote this automorphism by
$\tau_{S}^{mic}(P(\xi))(\sigma_{P(\xi)})$.  This notation is consistent with
(25.1)(j) \cite{abv} and is appropriate, for
$\tau_{S}^{mic}(P(\xi))(\sigma_{P(\xi)})$  is an operator
intertwining  $\tau_{S}^{mic}(P(\xi))$ with $\tau_{S}^{mic}(P(\xi))
\circ \sigma = \tau_{S}^{mic}(P(\xi))$.  It is a  consequence of
$\sigma_{P(\xi)} = \zeta_{P(\xi)}$ that 
$\tau_{S}^{mic}(P(\xi))(\sigma_{P(\xi)})$
is multiplication by  $\zeta \in
U_{m}$.  This yields the desired extension of $\tau_{S}^{mic}(P(\xi))$ to
$A_{S}^{mic,Q_{H}} 
\times \langle \sigma \rangle$.

In completing our definition of $\eta_{S}^{mic,Q_{H}}(\sigma)$ we
would like to arrange things so that some
pairing with $K \mathcal{P}(X({^\vee}H^{\Gamma}),
{^\vee}H^{Q_{H}}, \sigma)$ results in a formula akin to
(\ref{etapair}).  The natural objects to pair with $(P(\xi),
\sigma_{P(\xi)})$ are objects of the form $(\pi(\xi'), \zeta')$, where $\zeta'
\in U_{m}$ is to be interpreted as a self-intertwining operator of
(any representative of) $\pi(\xi')$.  The  pairs $(P(\xi),
\sigma_{P(\xi)})$ and $(\pi(\xi'), \zeta')$ may evidently be thought of as
objects in
$$K\mathcal{P}(X({^\vee}H^{\Gamma}), 
{^\vee}H^{Q_{H}}, \sigma) = K \mathcal{P}(X({^\vee}H^{\Gamma}),
{^\vee}H^{Q_{H}}) \otimes \mathbb{Z}[U_{m}]$$
and $K\Pi(H/\mathbb{R})_{\hat{Q}_{H}} \otimes \mathbb{Z}[U_{m}]$   respectively.
We extend pairing  
(\ref{pp1}) to a pairing
$$K\Pi(H/\mathbb{R})_{\hat{Q}_{H}} \otimes \mathbb{Z}[U_{m}] \times K
\mathcal{P}(X({^\vee}H^{\Gamma}), 
{^\vee}H^{Q_{H}}) \otimes \mathbb{Z}[U_{m}] \rightarrow \mathbb{C}$$
by defining
\begin{equation}
  \label{pp2}
\langle \pi(\xi') \otimes \zeta', P(\xi)\otimes  \zeta \rangle =
\zeta'\zeta \  \langle \pi(\xi'), P(\xi) \rangle.
\end{equation}

We finally extend (\ref{etadef}) to $h\sigma \in A_{S}^{mic, Q_{H}} \times
\langle \sigma \rangle$ by defining
\begin{equation}
  \label{etadef1}
  \eta_{S}^{mic,Q_{H}}(h\sigma) \nonumber  = \sum_{\xi \in
    \Xi({^\vee}H)^{Q_{H}}}  e(\xi)
(-1)^{\dim S_{\xi} - \dim S }\  \mathrm{tr}\left(
  \tau_{S}^{mic}(P(\xi))(h)\right)   (\pi(\xi) \otimes 1).
\end{equation}
Here, each $\pi(\xi) \otimes 1$ may be regarded as an irreducible
character, or equivalently as an irreducible character 
twisted by the identity self-intertwining operator
 (6 \cite{artreal}).  Viewed in this way,
twisted characters are paired with 
twisted perverse sheaves in  (\ref{pp2}), and  we obtain
\begin{align}
  \label{etapair1}
  \nonumber
  \langle \eta_{S}^{mic,Q_{H}}(h\sigma), (P(\xi),\sigma_{P(\xi)})
\rangle &=
(-1)^{\dim S} \, \zeta \ \mathrm{tr}\left(
  \tau_{S}^{mic}(P(\xi'))(h)\right) \\
& = (-1)^{\dim S}
\, \mathrm{tr}\left(
  \tau_{S}^{mic}(P(\xi'))(h\sigma_{P(\xi)})\right)
\end{align}
when $\sigma_{P(\xi)}$ is identified with $\zeta \in U_{m}$ as
above. 

The reader has likely noticed that the introduction of twisted objects here for
endoscopic groups appears artificial in the same way as it was in
(\ref{standend}).  The
introduction of twisted objects is justified  after comparison
with twisting for ${^\vee}G^{\Gamma}$.

We now consider twisting  for ${^\vee}G^{\Gamma}$. Under certain
circumstances one may imitate the procedure leading up to 
(\ref{etadef1}) for
${^\vee}G^{\Gamma}$.   Let us first assume that $S \subset
X({^\vee}H^{\Gamma})$ is an  ${^\vee}H$-orbit and
\begin{equation}
  \label{srel}
  S' = X(\epsilon)(S) \subset X({^\vee}G^{\Gamma})
  \end{equation}
is a ${^\vee}G$-orbit  (see (\ref{xep})). By
definition,  $S$ is $\sigma$-stable,  and so $S'$ is $\sigma$-stable.
Suppose $\xi \in \Xi({^\vee}G^{\Gamma})^{Q}$ is fixed by $\sigma$ and
that 
$$(P(\xi), 
\sigma_{P(\xi)}) \in \mathcal{P}(X({^\vee}G^{\Gamma}),
      {^\vee}G^{Q}, \sigma).$$
 The ${^\vee}H$-orbit $S$ again specifies 
stalks of $Q^{mic}(P(\xi))$ and we fix one, $Q^{mic}(P(\xi))_{x,\nu}$
($x$ belongs to $S \subset S'$, see 
Definition 24.7 and 
Lemma 25.3 \cite{abv}).
As before we denote the  representation
of $A_{S'}^{mic, Q}$ on $Q^{mic}(P(\xi))_{x,\nu}$ by $\tau_{S'}^{mic}(P(\xi))$.
The automorphism $\sigma_{P(\xi)}$  induces an automorphism
$\tau_{S'}^{mic}(P(\xi))(\sigma_{P(\xi)})$ of
$Q^{mic}(P(\xi))_{x,\nu}$ (\emph{cf.} (25.1)(h) and (25.1)(i)
\cite{abv}).
It  is  an operator 
intertwining  $\tau_{S'}^{mic}(P(\xi))$ with $\tau_{S'}^{mic}(P(\xi)) \circ \sigma$,
and yields an extension of  $\tau_{S'}^{mic}(P(\xi))$ to $A_{S'}^{mic,Q}
\rtimes \langle \sigma \rangle$.
Unlike the construction for ${^\vee}H^{\Gamma}$, the intertwining
operators need not be scalars.

The next step towards the construction of $\eta_{S'}^{mic,Q}(\sigma )$ is the
definition of commensurate objects to pair with $(P(\xi), \sigma_{P(\xi)})$.
The pairs $(\pi(\xi), \zeta) = \pi(\xi) \otimes \zeta$ above and the
equivariance of Theorem 
\ref{equivthm}  hint that these
objects should be (equivalence classes of) $\vartheta^{\Gamma}$-stable
representations of strong 
real forms together with their intertwining operators.  The $m$th
roots $\zeta$ in the pairs
$(\pi(\xi),\zeta)$ also indicate that the intertwining operators should
in some way ``remember'' the order $m$ of $\sigma$.
\begin{lem}
  \label{autm}
Suppose $\delta \in G^{\Gamma} - G$ is a strong real form and $g \in
G$ such that $\vartheta^{\Gamma} \circ \mathrm{Int}(g)(\delta) =
\delta$, \emph{i.e.} $\delta$ is equivalent to
$(\vartheta^{\Gamma})^{-1}(\delta)$.  Then $(\vartheta^{\Gamma} \circ \mathrm{Int}(g))^{m} =
\mathrm{Int}\left((\vartheta + \vartheta^{2} + \cdots +
\vartheta^{m})(g)\right)$ and $(\vartheta + \cdots +
\vartheta^{m})(g)$ belongs to $G(\mathbb{R}, \delta)$.
\end{lem}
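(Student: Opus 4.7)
The plan is a direct iteration plus a consistency check between two computations of $T^{m}(\delta)$, where I write $T := \vartheta^{\Gamma} \circ \mathrm{Int}(g)$. Since $\vartheta^{\Gamma}$ restricts to $\vartheta$ on $G$, one has the easy identity $\vartheta^{\Gamma} \circ \mathrm{Int}(g) = \mathrm{Int}(\vartheta(g)) \circ \vartheta^{\Gamma}$ on all of $G^{\Gamma}$. Iterating this relation by an induction on $k$ yields
$$T^{k} \;=\; \mathrm{Int}\bigl(\vartheta(g)\,\vartheta^{2}(g)\cdots\vartheta^{k}(g)\bigr) \circ (\vartheta^{\Gamma})^{k}.$$
Setting $k = m$ and using $\vartheta^{m} = 1$, the tail $(\vartheta^{\Gamma})^{m}$ restricts to the identity on $G$, so on $G$ the automorphism $T^{m}$ coincides with conjugation by $H_{m} := \vartheta(g)\,\vartheta^{2}(g)\cdots\vartheta^{m}(g)$. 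This is the interpretation of $(\vartheta + \vartheta^{2} + \cdots + \vartheta^{m})(g)$ as an ordered product, consistent with the additive notation of Proposition \ref{finord}.

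Next, I would observe that the two conclusions of the lemma are equivalent. Indeed, to promote $T^{m} = \mathrm{Int}(H_{m})$ from $G$ to all of $G^{\Gamma}$, one must check agreement on $\delta$: on the one hand $T^{m}(\delta) = \delta$ (by iterating $T(\delta) = \delta$), while on the other $\mathrm{Int}(H_{m})(\delta) = H_{m}\sigma(H_{m})^{-1}\delta$ where $\sigma = \mathrm{Int}(\delta)|_{G}$. These agree precisely when $\sigma(H_{m}) = H_{m}$, i.e., when $H_{m} \in G(\mathbb{R},\delta)$. Thus everything reduces to verifying the $\sigma$-invariance of $H_{m}$.

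To establish $\sigma(H_{m}) = H_{m}$, I would first exploit that $T(\delta) = \delta$ forces $T$ to commute with $\mathrm{Int}(\delta)$; restricting to $G$ gives $[T|_{G},\sigma] = 1$, so $\mathrm{Int}(H_{m}) = T^{m}|_{G}$ commutes with $\sigma$. This immediately puts $H_{m}^{-1}\sigma(H_{m})$ in $Z(G)$. To cut this central element down to $1$, I would unravel the hypothesis $T(\delta) = \delta$ as the concrete cocycle-type relation $a = \vartheta(g)^{-1}\sigma(\vartheta(g))$, where $a \in G$ is determined by $\vartheta^{\Gamma}(\delta) = a\delta$. Then, using the compatibility $\vartheta \circ \sigma = \mathrm{Int}(a) \circ \sigma \circ \vartheta$ forced by $\vartheta^{\Gamma}$ being a genuine automorphism of $G^{\Gamma}$, one may compute each $\sigma(\vartheta^{k}(g))$ explicitly. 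Packaging these into a product produces a telescoping identity of the shape $\sigma(H_{m}) = H_{m}\cdot P_{m}^{-1}$, where $P_{m}$ is the norm-type product $\prod_{j=0}^{m-1}\vartheta^{j}(a^{-1})$. The telescope collapses because consecutive factors $\vartheta^{k-1}(a)$ appearing in the expansion of $\sigma(\vartheta^{k}(g))$ cancel the trailing factors of $P_{k}$, leaving only the terms $\vartheta(g)\vartheta^{2}(g)\cdots\vartheta^{m}(g)$.

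The main obstacle is this telescoping step: tracking the central correction $P_{m}$ and showing it vanishes using the specific form of $a$. Everything else (the iteration on $G$ and the extension to $G^{\Gamma}$ via agreement on generators) is routine. Once $\sigma(H_{m}) = H_{m}$ is in hand, the equality $T^{m} = \mathrm{Int}(H_{m})$ on $G^{\Gamma}$ follows immediately since both automorphisms agree on $G$ and on $\delta$.
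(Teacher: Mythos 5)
Your iteration $T^{k} = \mathrm{Int}\bigl(\vartheta(g)\cdots\vartheta^{k}(g)\bigr)\circ(\vartheta^{\Gamma})^{k}$ is correct, and your observation that the two conclusions are equivalent (agreement of $T^{m}$ and $\mathrm{Int}(H_{m})$ on $G$ together with agreement on $\delta$) is a genuine and useful insight. However, you invoke $\vartheta^{m}=1$ as though it were a hypothesis, and it is not: $m$ is defined to be the order of $\sigma = \mathrm{Int}(s)\circ{^\vee}\vartheta^{\Gamma}$, not the order of $\vartheta$. Establishing $\vartheta^{m}=1$ is in fact the main content of the paper's proof, and omitting it is the principal gap in your proposal. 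The paper's route is: from $\sigma^{m}=1$ one gets $({^\vee}\vartheta^{\Gamma})^{m} = \mathrm{Int}\bigl((1+{^\vee}\vartheta+\cdots+{^\vee}\vartheta^{m-1})(s)\bigr)^{-1}$, so the restriction ${^\vee}\vartheta^{m}$ to ${^\vee}G$ is simultaneously distinguished (being a power of a distinguished automorphism) and inner, hence trivial by 16.5 of \cite{humphreys}; the compatibility between the distinguished automorphisms $\vartheta$ and ${^\vee}\vartheta$ via the dual based root data then forces $\vartheta^{m}=1$. Without this step your computation stops at $T^{m}=\mathrm{Int}(H_{m})\circ(\vartheta^{\Gamma})^{m}$ with no control over the tail.

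For the second conclusion you take a genuinely different route from the paper. The paper simply evaluates the first assertion at $\delta$ to obtain $\delta = T^{m}(\delta) = \mathrm{Int}(H_{m})(\delta)$ and reads off $H_{m}\in G(\mathbb{R},\delta)$. You instead attempt a direct cocycle computation: setting $\vartheta^{\Gamma}(\delta)=a\delta$, so that $a = \vartheta(g)^{-1}\sigma(\vartheta(g))$ and $\sigma\circ\vartheta^{k} = \mathrm{Int}\bigl((\vartheta^{k-1}(a)\cdots\vartheta(a)a)^{-1}\bigr)\circ\vartheta^{k}\circ\sigma$, the product telescopes to $\sigma(H_{m}) = H_{m}\cdot Q_{m}$ with $Q_{m} = \vartheta^{m-1}(a)\cdots\vartheta(a)a$ a central element, and you would then need $Q_{m}=1$. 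You correctly flag this as the main obstacle, and indeed it is: a short computation shows $(\vartheta^{\Gamma})^{m}(\delta) = Q_{m}\delta$, so $Q_{m}=1$ is precisely the statement that $(\vartheta^{\Gamma})^{m}$ fixes $\delta$. Your version is more explicit but does not supply the missing argument; the paper reaches the same point by a shorter path, taking the first assertion at $\delta$ for granted. In either formulation the content to be supplied is that $(\vartheta^{\Gamma})^{m}$ acts trivially on $\delta$, not merely on $G$, and your write-up should either prove that $Q_{m}=1$ or explain why the finite-order and distinguished hypotheses on $\vartheta^{\Gamma}$ force $(\vartheta^{\Gamma})^{m}=1$ on all of $G^{\Gamma}$.
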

\begin{proof}
  Since
  $$\sigma^{m} = (\mathrm{Int}(s) \circ {^\vee}\vartheta^{\Gamma})^{m} =
  \mathrm{Int}\left((1 + {^\vee}\vartheta + \cdots + 
{^\vee}\vartheta^{m-1})(s)\right) \circ
({^\vee}\vartheta^{\Gamma})^{m}$$ 
is the identity and $({^\vee}\vartheta^{\Gamma})^{m}_{|{^\vee}G} =
{^\vee}\vartheta^{m}$ is 
distinguished, it follows that ${^\vee}\vartheta^{m}$ is
also the identity (16.5 \cite{humphreys}).  The compatibility of
${^\vee}\vartheta$ with the distinguished automorphism
$\vartheta$ forces the two automorphisms to have the same
order.  Therefore $\vartheta^{m}$ is the identity and the first
assertion follows.  The second assertion follows directly from the
definition of $G(\mathbb{R}, \delta)$ and 
$$\delta = (\vartheta^{\Gamma} \circ \mathrm{Int}(g))^{m} (\delta) =
 \mathrm{Int}\left(( \vartheta + \cdots +
\vartheta^{m})(g)\right)(\delta)$$
\end{proof}

\begin{definition}
  \label{twistedrep}
A \emph{twisted representation of a strong real form} for $(G^{\Gamma},
\vartheta^{\Gamma}, \sigma)$
is a pair $((\pi, \delta), \mathcal{I})$  satisfying the following conditions.
\begin{enumerate}
\item  $(\pi, \delta)$ is a
representation of a strong real form of $G^{\Gamma}$ (Section
\ref{recall}).
\item $\vartheta^{\Gamma} \cdot (\pi, \delta) $ is equivalent to $(\pi,
\delta)$, \emph{i.e.} there exists $g \in G$ such that
$(\vartheta^{\Gamma}) \circ \mathrm{Int}(g) (\delta) = \delta$ and
$\pi \circ \vartheta \circ \mathrm{Int}(g)$ is
infinitesimally equivalent to $\pi$ (see (\ref{repact})). 
\item $\mathcal{I}$ is  a linear automorphism of the representation space of
  $\pi$ such that for some $y \in G(\mathbb{R}, \delta)$ the operator
  $\mathcal{I} \pi(y)$ exhibits the equivalence of part 2
   above, \emph{i.e.} 
  $$\pi \circ \vartheta \circ \mathrm{Int}(g)   = \mathcal{I}\pi(y) \, \pi \, (
  \mathcal{I}\pi(y))^{-1} $$
  (on the appropriate dense subspace).
  \item $(\mathcal{I} \pi(y))^{m} = \pi\left((\vartheta + \cdots +
    \vartheta^{m})(g)\right)$.   
\end{enumerate}
\end{definition}
We note any other element of $G$ producing the same equivalence in
part 3 of 
this definition is of the form $gx$ where $x \in G(\mathbb{R},
\delta)$, and for this element $\mathcal{I}\pi(yx)$ is an intertwining
operator. The intertwining property then implies that
$$( \mathcal{I}\pi(yx))^{m} = ( \mathcal{I} \pi(y) \pi(x))^{m} =
\pi(\left(\vartheta \circ 
\mathrm{Int}(g) + \cdots +(\vartheta \circ
\mathrm{Int}(g))^{m} \right)(x)) \, (\mathcal{I}\pi(y))^{m}$$
which may be verified to equal
$$\pi(\left(\vartheta \circ
\mathrm{Int}(g) + \cdots +(\vartheta \circ
\mathrm{Int}(g))^{m} \right)(x)) \, \pi\left((\vartheta + \cdots +
    \vartheta^{m})(g)\right) =  \pi\left((\vartheta + \cdots +
    \vartheta^{m})(gx)\right)$$
    using Lemma \ref{autm}.
Thus, Definition \ref{twistedrep} is well-defined.
\begin{definition}
  \label{twistedrep1}
Two twisted representations of strong real forms $((\pi, \delta), \mathcal{I})$
and $((\pi', \delta'), \mathcal{I}')$ as in Definition \ref{twistedrep} are
\emph{equivalent} if 
\begin{enumerate}
\item $(\pi,\delta)$ is equivalent to $(\pi',\delta')$, \emph{i.e.}
  there exists $g_{0} \in G$ such that $g_{0}\delta g_{0}^{-1} =
  \delta'$ and $\pi \circ \mathrm{Int}(g_{0}^{-1}) = A\pi' A^{-1}$ for
  some intertwining operator $A$.

\item $$A^{-1} (\mathcal{I}\pi(y))^{-1} A \,\mathcal{I}'\pi'(y') = \pi'(g_{0}g^{-1}
  \vartheta^{-1}(g_{0}^{-1}) g')$$
for $y,y',g,g' \in G$ as in 2 Definition  \ref{twistedrep}.
\end{enumerate}
We denote the set of equivalence classes of  irreducible twisted representations
of strong real forms for $(G^{\Gamma}, \vartheta, \sigma)$ by $\Pi(G/
\mathbb{R}, \vartheta, \sigma)$.
\end{definition}
The last equation in  Definition \ref{twistedrep1} is motivated by the
fact that the operator on the left 
intertwines $\pi'$ with its composition under $\mathrm{Int}(g_{0}g^{-1}
\vartheta^{-1}(g_{0}^{-1}) g')$.  We omit the unpleasant verification
that $g_{0}g^{-1} \vartheta^{-1}(g_{0}^{-1}) g'$ lies in
$G(\mathbb{R}, \delta')$ and that Definition 5.11 is well-defined.
Definitions \ref{twistedrep} and \ref{twistedrep1} have
analogues for canonical projective representations of strong real
forms (Definition 10.3 \cite{abv}).  We omit the details once again
and write $\Pi^{z}(G/  \mathbb{R}, \vartheta, \sigma)$ for the
equivalence classes of type $z$.  The subset of type $\hat{Q}$
is denoted by  $\Pi^{z}(G/\mathbb{R}, \vartheta, \sigma)_{\hat{Q}}$.

The elements of $\Pi^{z}(G/\mathbb{R}, \vartheta, \sigma)_{\hat{Q}}$
are the objects we wish to pair with $\mathcal{P}(X({^\vee}G^{\Gamma}),
{^\vee}G^{Q}, \sigma)$.  This presents us with the following problem:
given $\sigma$-fixed $\xi',\xi \in \Xi({^\vee}G^{\Gamma})^{Q}$ and
(representatives) 
$(\pi(\xi'), \mathcal{I})$ , $(P(\xi), \sigma_{P(\xi)})$ in
$\Pi^{z}(G/\mathbb{R}, \vartheta, \sigma)_{\hat{Q}}$ and
$\mathcal{P}(X({^\vee}G^{\Gamma}), {^\vee}G^{Q}, \sigma)$
respectively, how does one define
\begin{equation}
  \label{prepp3}
\langle (\pi(\xi'), \mathcal{I}) , (P(\xi), \sigma_{P(\xi)}) \rangle
\in \mathbb{C}
\end{equation}
in a canonical fashion.
For endoscopic groups, the pairing (\ref{pp2}) was  defined
using the canonical base point $(\pi(\xi), 1)$ with trivial
self-intertwining operator, and the canonical base point $(P(\xi),
\sigma_{P(\xi)})$ with $\sigma_{P(\xi)}$ acting trivially.  If one
could similarly choose a canonical intertwining operator
$\mathcal{I}_{\xi'}$ and a 
canonical automorphism $\sigma_{\xi}$ in (\ref{prepp3}), then all
other relevant intertwining operators and automorphisms would differ from
these by roots of unity in $U_{m}$ and we could set
\begin{equation}
  \label{pp3}
\langle (\pi(\xi'), \zeta' \mathcal{I}_{\xi'}) , (P(\xi),
\zeta\sigma_{\xi }) \rangle
 = \zeta' \zeta \, \langle \pi(\xi'), P(\xi) \rangle, \ \zeta', \zeta \in U_{m}.
\end{equation}
This would constitute a canonical pairing
$$K \Pi^{z}(G/\mathbb{R},
\vartheta, \sigma)_{\hat{Q}} \times K \mathcal{P}(X({^\vee}G^{\Gamma}),
         {^\vee}G^{Q}, \sigma) \rightarrow \mathbb{C}.$$

We are not prepared to solve the general problem of choosing a canonical
intertwining operator $\mathcal{I}_{\xi'}$ for $\pi(\xi')$ as indicated.   It is
solved in the examples of twisted endoscopy for $G = \mathrm{GL}_{n}$,
where $Q$ is trivial, in 2.1 \cite{arthurbook}, 8 \cite{amr} and 3.2
\cite{mok}.  In these examples one only encounters the quasisplit
forms of general linear groups, and therefore has recourse to Whittaker data to
normalize intertwining operators.  Perhaps there is a
simple resolution of the problem for arbitrary quasisplit groups, but
this is less likely for all strong inner forms.

The problem of choosing a canonical automorphism $\sigma_{\xi}$
for $P(\xi)$ does have a simple resolution using the map $\chi$ as in
(\ref{chimap}).  It is known that the irreducible constructible sheaf
$\mu(\xi)$ occurs in $\chi(P(\xi))$ 
with multiplicity one ((7.11)(e) \cite{abv}).  In consequence, an
automorphism $\chi(\sigma_{P(\xi)})$ restricts to an automorphism of
$\mu(\xi)$.  The latter automorphism  induces a self-intertwining operator
of the character $\tau^{loc}_{S_{\xi}}(\mu(\xi))$ as in the discussion preceding
(\ref{epstar1}).  As such, it is a root of unity in $U_{m}$.  We choose
the canonical automorphism $\sigma_{\xi}$ of $P(\xi)$ to be
the one for which this root of unity is $1 \in U_{m}$.  

At last we are able to describe twisted endoscopic lifting, at
least under the assumption  that the canonical intertwining operators in the
definition of  (\ref{prepp3}) are defined.  Under this assumption
we define
\begin{equation}
  \label{etadef2}
  \eta_{S'}^{mic,Q}(\sigma ) =  \sum_{\xi \in \Xi({^\vee}G)^{Q}} e(\xi)
(-1)^{\dim S_{\xi} - \dim S' }\ \mathrm{tr}\left(
  \tau_{S'}^{mic}(P(\xi))(\sigma_{\xi })\right) \,  (\pi(\xi),
  \mathcal{I}_{\xi}) ,
\end{equation}
where $\tau_{S'}^{mic}(P(\xi))(\sigma_{\xi}) = 0$ when $\sigma \cdot \xi
\neq \xi$.  This is to be regarded as a complex combination of twisted
characters.  
Using pairing (\ref{prepp3}), one may compute as in (\ref{etapair1})
that for $(P(\xi), 
\sigma_{P(\xi)}) \in K \mathcal{P}(X({^\vee}G^{\Gamma}),
      {^\vee}G^{Q}, \sigma)$ we have
$$\langle \eta_{S'}^{mic,Q}, (P(\xi),\sigma_{P(\xi)}) \rangle =
  (-1)^{\dim S'} \, \mathrm{tr}\left(
  \tau_{S'}^{mic}(P(\xi))(\sigma_{P(\xi)}) \right).$$

The twisted endoscopic lifting identity of Theorem 25.8 \cite{abv}
may now be read as 
\begin{equation}
\label{endolift1}
\langle  \eta_{S'}^{mic,Q}(\sigma), (P(\xi),
\sigma_{P(\xi)}) \rangle = \langle \eta_{S}^{mic,Q_{H}}(\sigma),
\epsilon^{*}(P(\xi), \sigma_{P(\xi)}) \rangle
\end{equation}
for all $(P(\xi), \sigma_{P(\xi)}) \in K
\mathcal{P}(X({^\vee}G^{\Gamma}), {^\vee}G^{Q}, \sigma)$  (see (25.1)(j)
and (24.10) \cite{abv}).    This identity defines the twisted
endoscopic lifting map $\epsilon_{*}$ from the complex vector space
generated by $\eta_{S}^{mic,Q_{H}}(\sigma)$ to the vector space
generated by $\eta_{S'}^{mic,Q}$ via $\epsilon_{*} 
\eta_{S}^{mic,Q_{H}}(\sigma) = \eta_{S'}^{mic, Q}(\sigma)$
(\emph{cf.} (\ref{endolift})).

\section{Twisted endoscopy for $\mathrm{GL}_{N}$ and Arthur packets of
  classical groups}
\label{artsec}

In this concluding
section we  illustrate how the twisted endoscopic identity
(\ref{endolift1}) is applicable to Arthur packets as defined in
\cite{abv}.  We shall  do this in  the framework of
Examples \ref{artexample} and \ref{artexample1} which were chosen to match
the endoscopic classification of Arthur (\cite{arthurbook}).

We begin by recalling the definitions leading up to Arthur packets.  In
these definitions ${^\vee}G^{\Gamma}$ is an arbitrary weak E-group.  
An \emph{Arthur 
  parameter} (or \emph{A-parameter}) is a homomorphism
$$\psi: W_{\mathbb{R}} \times \mathrm{SL}_{2} \rightarrow
     {^\vee}G^{\Gamma}$$
such that $\psi_{|W_{\mathbb{R}}}$ is a tempered  (\emph{i.e.} bounded)
L-parameter and $\psi_{|\mathrm{SL}_{2}}$ is holomorphic.

Let  $\psi$ be an A-parameter. There is an \emph{associated L-parameter}
$$\phi_{\psi}:  W_{\mathbb{R}} \rightarrow {^\vee}G^{\Gamma}$$
defined by
$$\phi_{\psi}(w) = \psi\left( w, \begin{bmatrix} |w|^{1/2} & 0 \\ 0
  & |w|^{-1/2} \end{bmatrix} \right), \ w \in W_{\mathbb{R}}.$$
There is also an associated \emph{Arthur component group}
$A_{\psi}$, which is the component group of the centralizer in
$^{\vee}G$ of the image of $\psi$.   These definitions are due to
Arthur (\S 4 \cite{arthur89}).

The first connection with the theory of
\cite{abv} is that the Arthur component group
$A_{\psi}$ is equal to an equivariant micro-fundamental group
$A^{mic, Q}_{S_{\psi}}$. Here, $Q$
is trivial and $S_{\psi} \subset 
X({^\vee}G^{\Gamma})$ is the ${^\vee}G$-orbit\footnote{The orbit
  $S_{\psi}$ here is not to be  confused with the centralizer
  $S_{\psi}$ in \cite{arthur89} or \cite{arthurbook}.} corresponding to 
$\phi_{\psi}$ via Proposition 6.17 \cite{abv} (see also Definition
24.15 and Proposition 22.9 \cite{abv}). 
For this reason we assume from now on that $Q$ is trivial. 

Recall from the previous section that for every $\xi \in
\Xi({^\vee}G^{\Gamma})^{Q}$ there is a representation
$\tau^{mic}_{S_{\psi}}(P(\xi))$ of $A_{S_{\psi}}^{mic,Q} = A_{\psi}$. Suppose that
the hypotheses of Theorem \ref{equivthm} are satisfied so that
(\ref{llc3}) holds.  Then
the \emph{Arthur packet} (or
\emph{A-packet}) of $\psi$  \cite{abv} may be
described as the 
set of $\pi(\xi) \in \Pi^{z}(G/\mathbb{R})_{\hat{Q}}$ whose
multiplicity in
$$\eta_{\psi} = \eta_{S_{\psi}}^{mic,Q} = \sum_{\xi \in
  \Xi({^\vee}G^{\Gamma})^{Q}} e(\xi) (-1)^{\dim S_{\xi} - \dim S_{\psi} }\ 
\dim \tau_{S}^{mic}(P(\xi))\  \pi(\xi)$$
is positive (\emph{cf.} Definition 19.13, Definition 19.15, Corollary 19.16,
Definition 22.6, Corollary 24.9 (a) \cite{abv}).  In other words, the
A-packet of $\psi$ is
\begin{equation}
  \label{apacket}
\Pi^{z}(G/\mathbb{R})_{\hat{Q},\psi} = \{\pi(\xi) :
\tau_{S_{\psi}}^{mic}(P(\xi)) \neq 0\}.
\end{equation}
We should emphasize that  the assumption of $Q$ being trivial in our
description results in A-packets which are potentially smaller than
the extended A-packets defined for $Q =
\pi_{1}({^\vee}G)^{alg}$ in Definition 22.6 \cite{abv}.  To
be honest, Adams-Barbasch-Vogan do not even give a name to the sets in
(\ref{apacket}), although they play a key role in endoscopy
(\emph{cf.} Definition 26.8 \cite{abv}).

Henceforth, we work under the assumptions of Examples \ref{artexample}
and \ref{artexample1}.     In particular, $G = \mathrm{GL}_{N}$ and
the endoscopic group $H$ is a product of symplectic or orthogonal groups.  
Our task is to reveal the relationship between the twisted endoscopic
transfer identity (\ref{endolift1}) and  the A-packets
we have just defined.

We fix  an A-parameter $\psi_{H}: W_{\mathbb{R}} \times
\mathrm{SL}_{2}  \rightarrow {^\vee}H^{\Gamma}$ and define
$\eta_{\psi_{H}} = 
\eta_{S_{\psi_{H}}}^{mic,Q_{H}}$.  Recall that $S_{\psi_{H}}$ is the
${^\vee}H$-orbit corresponding to $\phi_{\psi_{H}}$ via Proposition
6.17 \cite{abv}, and $Q_{H}$ is
trivial.  More generally, we set $\eta_{\psi_{H}}(h\sigma) =
\eta_{S_{\psi_{H}}}^{mic,Q_{H}}(h\sigma)$ as in (\ref{etadef1}).  The
virtual representation $\eta_{\psi_{H}}(\sigma)$ is to appear on the
right-hand side of (\ref{endolift1}).

For the left-hand side,  set $\psi_{G} = \epsilon \circ \psi_{H}$ so
that  $\psi_{G}$ is an A-parameter for $G$. It is a simple exercise to
show that the ${^\vee}G$-orbit
$S_{\psi_{G}}$ corresponding to $\phi_{\psi_{G}} = \epsilon \circ
\phi_{\psi_{H}}$ is equal to the ${^\vee}G$-orbit of
$X(\epsilon)(S_{\psi_{H}})$ (\emph{cf.} (\ref{srel}), (\ref{xep})).
Let $\eta_{\psi_{G}}(\sigma) = \eta_{S_{\psi_{G}}}^{mic,Q}(\sigma)$
as in (\ref{etadef2}).  Recall that in this definition we are making
use of a canonical normalization of intertwining operators (2.1
\cite{arthurbook}, 8 \cite{amr}).

The twisted endoscopic transfer identity (\ref{endolift1}) is now in the form
\begin{equation}
\label{endolift2}
\langle  \eta_{\psi_{G}}(\sigma), (P(\xi),
\sigma_{P(\xi)}) \rangle = \langle \eta_{\psi_{H}}(\sigma),
\epsilon^{*}(P(\xi), \sigma_{P(\xi)}) \rangle.
\end{equation}
This identity is a sheaf-theoretic analogue of Arthur's equation
(2.2.3) \cite{arthurbook} in which  $\eta_{\psi_{H}}(\sigma)$ plays
the role of the stable linear form defining an A-packet.

Equation
(\ref{endolift2}) is significant in its own right, for it yields
information about the elusive $\eta_{\psi_{H}}(\sigma)$ in terms of
$\eta_{\psi_{G}}(\sigma)$, which is better understood.  Let us investigate
a specific example.  The reader should revisit Examples
\ref{artexample} and \ref{artexample1} for the background.

\begin{exmp}
  \label{gl2}
Take $N = 2$, $G = \mathrm{GL}_{2}$, $N_{O} = 1$ and $N_{S}' = 1$.
Then $\sigma = {^\vee} \vartheta = \vartheta$ has order two, 
${^\vee}H^{\Gamma} = \mathrm{Sp}_{2} \times \Gamma  = \mathrm{SL}_{2}
\times \Gamma$, and $H = \mathrm{SO}_{3} \cong \mathrm{PGL}_{2}$.  The
second invariants of both ${^\vee}G^{\Gamma}$ and ${^\vee}H^{\Gamma}$
are trivial.  As is customary, we ignore the copy of $\Gamma$ in the
previous direct products. 
A-parameters $\psi_{H}: W_{\mathbb{R}}  \times \mathrm{SL}_{2}
\rightarrow \mathrm{SL}_{2} \times \Gamma$ may be divided into two
types:  those whose restriction $(\psi_{H})_{|\mathrm{SL}_{2}}$ is
trivial and those whose restriction $(\psi_{H})_{|\mathrm{SL}_{2}}$ is
the identity map.  The former case is the case of tempered
A-parameters.  In this case the A-packets reduce to L-packets and both
$\eta_{\psi_{H}}$ and $\eta_{\psi_{G}}$ are well-known for any
groups (page 19 \cite{abv}).

In the latter case $\psi_{H}(W_{\mathbb{R}})$ is central in
$\mathrm{SL}_{2}$ and it is harmless to assume that it is trivial. 
One may compute that 
$$\phi_{\psi_{H}}(z) = \phi_{\psi_{G}}(z) = \begin{bmatrix} |z\bar{z}|^{1/2} & 0 \\
  0 & |z \bar{z}|^{-1/2} \end{bmatrix}, \ z \in \mathbb{C}^{\times}$$
and $\phi_{\psi_{H}}(j) =\phi_{\psi_{G}}(j)=I$.  The L-packet of
$\phi_{\psi_{G}}$ (of type $\hat{Q}$) is the trivial 
representation of $\mathrm{GL}(2,\mathbb{R})$.  Similarly the L-packet of
$\phi_{\psi_{H}}$ (of type $\hat{Q}_{H}$) is  the trivial
representation of $\mathrm{PGL}(2, \mathbb{R})$.

It is generally true that the A-packet of an A-parameter contains the
L-packet of its 
associated L-parameter (Lemma 19.14 (b) \cite{abv}).  By a comparison
with the A-packets of Arthur for general linear groups (pages 24-25
and (2.2.1) \cite{arthurbook}), one would expect  the A-packet of 
$\psi_{G}$ to be equal to the L-packet of $\phi_{\psi_{G}}$.  This
is indeed true, although we see no simple explanation for this fact,
even for $N=2$.  The explanation given in \cite{abv} relies on
$\psi_{G}$ being \emph{unipotent} (Definition 27.1, Theorem 27.18 (d),
Example 19.17 \cite{abv}).  The explanation is  equally valid for $H =
\mathrm{PGL}_{2}$ and so we may write
$$\Pi(G/\mathbb{R})_{\hat{Q},\psi_{G}} = \{\mathbf{1}_{\mathrm{GL}(2,
  \mathbb{R})} \}, \ \Pi(H/\mathbb{R})_{\hat{Q}_{H},\psi_{H}} =
\{\mathbf{1}_{\mathrm{PGL}(2, 
  \mathbb{R})} \}.$$
Incidentally, the triviality of $Q$ and $Q_{H}$ manifests itself here
in the absence of the trivial representations of the other (strong) real
forms of $G$ and $H$.

Even though the A-packets in this example are transparent, it is still
instructive to consider (\ref{endolift2}).  On the left of
(\ref{endolift2}) we have 
$$\eta_{\psi_{G}}(\sigma) =  e(\xi) \ \mathrm{tr}\left(
  \tau_{S_{\psi_{G}}}^{mic}(P(\xi))(\sigma_{\xi })\right) \,  (\pi(\xi),
  \mathcal{I}_{\xi}).$$
As we know that the A-packet of $\psi_{G}$ is a singleton, the sum
over $\xi \in \Xi({^\vee}G^{\Gamma})^{Q}$ in (\ref{etadef2}) is reduced to a
single index $\xi = (S_{\psi_{G}}, \mathbf{1})$.  The ${^\vee}G$-orbit
$S_{\psi_{G}}$ has been described earlier.  The second term $\mathbf{1}$
denotes the trivial, and only, representation of the 
component group
$A_{S_{\psi_{G}}}^{loc,Q} = {^\vee}G_{\phi_{\psi_{G}}}/
({^\vee}G_{\phi_{\psi_{G}}})_{0} = \{1\}$.
It is an immediate consequence of  Definition 15.8 \cite{abv} that
$e(\xi) = 1$.   According to Corollary 24.9 (b) and Proposition 23.2
(b) \cite{abv}, the representation
$\tau_{S_{\psi_{G}}}^{mic}(P(\xi))$ is  equal to
$\tau_{S_{\psi_{G}}}^{loc}(\mu(\xi)) = \mathbf{1}$. The
intertwining operator 
$\tau_{S_{\psi_{G}}}^{mic}(P(\xi))(\sigma_{\xi })= \pm 1$ has been
normalized to equal $1$.  Finally, a straightforward verification  of 2.1
\cite{arthurbook} shows that the self-intertwining operator
$\mathcal{I}_{\xi}$ of $\pi(\xi) =
\mathbf{1}_{\mathrm{GL}(2,\mathbb{R})}$ is normalized to equal $1$.
In short, $\eta_{\psi_{G}}(\sigma) = (\mathbf{1}_{\mathrm{GL}(2,
  \mathbb{R})},1)$.

The pairing on the left-hand side of (\ref{endolift2}) is taken with
twisted perverse sheaves $(P(\xi'), \sigma_{\xi'})$, for any  $\xi' =
(S_{\xi'}, \tau_{\xi'}) 
\in \Xi(^{\vee}G^{\Gamma})^{Q}$.  Since the component groups
$A_{S_{\xi'}}^{loc}$ are all trivial, the representations $\tau_{\xi'}$
are all trivial.  This implies that the local system
$\mathcal{V}_{\xi'}$  on each ${^\vee}G$-orbit $S_{\xi'}$ is a
constant sheaf.

Let us describe the orbits $S_{\xi'}$.  The definition of the pairing
allows us to restrict our attention to the orbits contained in some variety
$X(\mathcal{O}, {^\vee}G^{\Gamma})$.  As it happens,
$\mathbf{1}_{\mathrm{GL}(2,\mathbb{R})}$ corresponds to
$$((y, \mathcal{F}(\lambda)), \tau) = \left( \left(\begin{bmatrix} -i &
  0 \\0 & i  \end{bmatrix},\  \mathcal{F} \left(\begin{bmatrix} 1/2 &
  0 \\0 & -1/2 \end{bmatrix} \right) \right), \mathbf{1} \right) $$
under the local Langlands Correspondence (\ref{llc1}).  Taking
$\mathcal{O} = {^\vee}G \cdot \lambda$, Proposition
6.16 \cite{abv} describes the $S_{\xi'}$ as  ${^\vee}G_{y}$-orbits of
the complete flag variety of $\mathrm{GL}_{2}$.  Clearly, the centralizer
${^\vee}G_{y}$ is 
the diagonal subgroup of $\mathrm{GL}_{2}$, and it is well-known that
the flag variety is isomorphic to the projective line
$\mathbb{P}^{1}$.  There are three resulting orbits in
$\mathbb{P}^{1}$, namely $\{0\}$, $\{ \infty \}$ and the open
orbit.  Let us label these orbits as $S_{1}$, $S_{2}$ and $S_{3}$
respectively.  It is safe to identify these orbits with
their corresponding ${^\vee}G$-orbits on  $X(\mathcal{O},
{^\vee}G^{\Gamma})$ (Proposition 7.14 \cite{abv}).  We may also
identify $S_{k}$ with $\xi_{k} = (S_{k}, \mathbf{1})$, for $k =
1,2,3$.  By construction $\pi(\xi_{1}) =
\mathbf{1}_{\mathrm{GL}(2,\mathbb{R})}$.  In consequence one may
compute the pairing (\ref{pp3})
\begin{equation}
  \label{lhsendolift}
\langle ( \eta_{\psi_{G}}(\sigma) , (P(\xi_{k}),
\sigma_{\xi_{k} }) \rangle
=\left\{ \begin{array}{ll} 1 & k = 1\\
  0 & k \neq 1 \end{array} \right.  
\end{equation}
(\emph{cf.} Theorem 1.24 \cite{abv}).  

The perverse sheaves $P(\xi_{k})$ are determined by the closure relations
among $S_{1}$, $S_{2}$ and $S_{3}$. Since
the closure $\overline{S_{\xi_{k}}}$ of the orbit  $S_{\xi_{k}}$ is
non-singular, the perverse sheaf $P(\xi_{k})$ is the constant
sheaf on  $\overline{S_{\xi_{k}}}$ concentrated in degree $-\dim S_{k}$
(Lemma 4.3.2 \cite{bbd}).

Moving to the right-hand side of (\ref{endolift2}), we observe that
there is an identical description of the relevant ${^\vee}H$-orbits of
$X({^\vee}H^{\Gamma})$ in terms of the three orbits of the complete flag
variety of ${^\vee}H = \mathrm{SL}_{2}$ (which is again isomorphic
to $\mathbb{P}^{1}$).  We denote the three orbits
by $S_{H,1}$, $S_{H,2}$, and $S_{H,3}$ to distinguish them from the
earlier ${^\vee}G$-orbits.  As earlier, the perverse sheaf $P(\xi_{H,k})$ is the
constant sheaf on  $\overline{S_{\xi_{H,k}}}$ concentrated in degree
$-\dim S_{H,k}$.  The restriction map $\epsilon^{*}$ is nearly
negligible and has the obvious
effect on the perverse sheaves: $\epsilon^{*}(P(\xi_{k}), \sigma_{\xi_{k}}) =
(P(\xi_{H,k}), \sigma_{\xi_{H,k}})$.  In consequence, identities
(\ref{endolift2}) and (\ref{lhsendolift}) combine to give us
\begin{equation}
  \label{rhsendolift}
\langle ( \eta_{\psi_{H}}(\sigma) , (P(\xi_{H,k}),
\sigma_{\xi_{H,k} }) \rangle
=\left\{ \begin{array}{ll} 1 & k = 1\\
  0 & k \neq 1 \end{array} \right.  .
\end{equation}
It follows in turn that $\eta_{\psi_{H}}(\sigma) = (\pi(\xi_{H,1}),1) =
(\mathbf{1}_{\mathrm{PGL}(2, \mathbb{R})},1)$ and
$\Pi(H/\mathbb{R})_{\hat{Q}_{H}, \psi_{H}} =
\{\mathbf{1}_{\mathrm{PGL}(2, \mathbb{R})}\} $ as expected. 
\end{exmp}

In Example \ref{gl2} we have delineated a method for computing
$\eta_{\psi_{H}}(\sigma)$, and thereby the A-packet
$\Pi^{z_{H}}(H/\mathbb{R})_{\hat{Q}_{H}, \psi_{H}}$, by using
(\ref{endolift2}).  It is possible to pursue this method for higher
rank $G = \mathrm{GL}_{N}$, but there are notable impediments.  We have already
mentioned the difficulty in proving that the A-packet
$\Pi(G/\mathbb{R})_{\hat{Q},\phi_{G}}$ is a singleton when $\psi_{G}$
is not unipotent.

Beyond this, one needs a description of the
${^\vee}G$-orbits and ${^\vee}H$-orbits occurring in the complete
local parameters. If the
infinitesimal character of the A-packet
$\Pi(G/\mathbb{R})_{\hat{Q}, \psi_{G}}$ is regular then good
descriptions for these orbits are given in terms of orbits on complete
flag varieties (\cite{yamamoto}, \cite{wyser}).
If the infinitesimal character is singular then additional work is
required to describe these orbits in terms of orbits on partial flag
varieties.

Finally, the perverse sheaves $P(\xi)$ are difficult to
characterize when the closure of the orbit $S_{\xi}$ is singular
(\cite{mcgovern}).  One 
approach to such a characterization is through the map $\chi$
of (\ref{chimap}) and its  relationship to representation theory via
the Kazhdan-Lusztig-Vogan algorithm ((7.11)(e) 
and Corollary 1.25 \cite{abv}).   Even if this approach does not provide an
overt conceptual characterization, it is amenable to computation (using
software such as the ATLAS of Lie groups and representations \cite{atlas}).

Unfortunately  this does not suffice, for we need a
characterization of \emph{twisted} perverse sheaves $(P(\xi),
\sigma_{\xi})$.  The obvious course of action is to use the extended map $\chi$
of (\ref{chimap1}) together with a twisted version of the
Kazhdan-Lusztig-Vogan algorithm (\cite{lusztigvogan}).  The details of
this strategy should follow \cite{av15} and 19 \cite{avvarxiv}.
One may describe the expected outcome of the strategy as follows.  The
map $\chi$ furnishes a decomposition of $(P(\xi), \sigma_{\xi})$ as a
virtual sum of irreducible twisted constructible sheaves.   Likewise, an
irreducible twisted representation may be decomposed as a virtual sum of
twisted standard representations.  The expected result is that there
is a simple combinatorial formula which exhibits an equivalence between the two
decompositions (\emph{cf.} Corollary 1.25 \cite{abv}).

This equivalence would allow one to recast the twisted endoscopic
identity (\ref{endolift2}), replacing the twisted perverse sheaves
with twisted constructible sheaves.  The twisted constructible sheaves
are simple to compute.  Beyond this, the  advantage of the
alternative twisted endoscopic identity would be the ease of comparison
with the twisted character identities defining the A-packets of Arthur
((8.3.4) \cite{arthurbook}).  The equality of the A-packets of
(\ref{apacket}) and the A-packets of \cite{arthurbook} promises to follow
from such a comparison.


\end{document}